\documentclass[10pt,reqno]{article}

\usepackage{amsmath}
\usepackage{amssymb}
\usepackage{dsfont}
\usepackage{mathrsfs}
\usepackage{epsfig}
\usepackage{float}
\usepackage{color}

\numberwithin{equation}{section}

\usepackage[margin = 1.4in] {geometry}

\allowdisplaybreaks

\usepackage[hyperindex,breaklinks]{hyperref}

\makeatletter
\@namedef{subjclassname@2020}{\textup{2020} Mathematics Subject Classification}
\makeatother

\newtheorem{Theorem}{Theorem}

\newtheorem{Lemma}[Theorem]{Lemma}
\newtheorem{Proposition}[Theorem]{Proposition}
\newtheorem{Corollary}[Theorem]{Corollary}
\newtheorem{remark}[Theorem]{Remark}

\numberwithin{Theorem}{section}

\bibstyle{plain}

\newcommand{\ds}{\displaystyle}

\newcommand{\references}[1]{\theinstitutions 
\begin{thebibliography}{#1}}

\title{A new family of solitons for nonlinear Schrödinger equations
with non-vanishing boundary conditions in high dimension}

\author{Xiuqing Duan}

\begin{document}

\author{
\renewcommand{\thefootnote}{\arabic{footnote}}
Xiuqing Duan\footnotemark[1] }

\footnotetext[1]{Department of Mathematics, 
The Hong Kong University of Science and Technology, Hong Kong, China. E-mail: {\tt xduanad@connect.ust.hk} \\
Current address:
School of Physical and Mathematical Sciences, Nanyang Technological University, 637371, Singapore.
}

\maketitle

\begin{abstract}
In space dimensions $N \geq 4$, we introduce a new minimization procedure to construct traveling wave solutions
to nonlinear Schrödinger equations with non-vanishing boundary conditions at spatial infinity. 
We denote the family of solitons obtained using this construction by $\mathscr{J}$.
Mariş (Ann. of Math. 178:107-182, 2013) obtained a family of solitons by minimizing the action functional subject to a Pohozaev constraint; we use $\mathscr{P}$ to denote this family of solitons.
Chiron and Mariş (Arch. Rational Mech. Anal. 226:143-242, 2017)
used minimizing energy at fixed momentum to obtain a family of solitons; we denote this
family of solitons by $\mathscr{Q}$.
We show that, under some conditions, we have $\mathscr{Q} \subset \mathscr{J} \subset \mathscr{P}$. In addition, we show that $\mathscr{P} \subset \mathscr{J}$ under specific conditions.

\vspace{3mm}


\end{abstract}

\section{Introduction}

We are concerned with $N$-dimensional nonlinear Schrödinger equation 
\begin{equation}
\label{ann1.1}
\begin{aligned}
i \Psi_t+\Delta \Psi+G\left(|\Psi|^2\right) \Psi=0,
\end{aligned}
\end{equation}
where $\Psi$ is a complex function and
$$
\lim _{|x| \rightarrow \infty}|\Psi|=1, \quad G(1)=0 .
$$

Two specific instances of \eqref{ann1.1} include: the Gross-Pitaevskii (GP) equation with $G(x)=1-x$, and the cubic-quintic Schrödinger equation with $G(x)=-c_1+c_3 x-c_5 x^2$,
$c_1, c_3, c_5>0$ and $G(x)$ has two positive roots. Equation \eqref{ann1.1} can be used to model superfluidity (\cite{Abid2003}, 
\cite{berloff2008}, \cite{Gross1963}) and solitons in nonlinear optics (\cite{KL1998}, \cite{KPS1995}).

For \eqref{ann1.1}, the sound speed at spatial infinity is $\nu_s=\sqrt{2}$. See the introduction of \cite{Maris2013, CM2017}.

Defining $V(x)=\int_x^1 G(y) d y$. The energy with respect to \eqref{ann1.1} is $E(\Psi)=\int_{\mathbb{R}^N} |\nabla \Psi|^2+V(|\Psi|^2)dx$. The momentum $P(\Psi)$ in dimension
$N \geq 3$ is rigorously defined in Section 2 of \cite{Maris2013} and we will provide it later. $E$ and $P$ are two conserved quantities of \eqref{ann1.1}.

In \cite{BGMP1989, BM1988, BBPZ1989, JR1982, JPR1986}, the traveling waves of \eqref{ann1.1} have been considered. 
Due to the rotation invariance of \eqref{ann1.1}, we assume that the traveling waves have the form $\Psi(x, t)=w\left(x+\nu t e_1\right)$, with $\nu$ representing the velocity and $e_1=(1,0, \cdots, 0)$.
$w$ satisfies
\begin{equation}
\label{ann1.4}
\begin{aligned}
-i \nu w_{x_1}+\Delta w+G\left(|w|^2\right) w=0 .
\end{aligned}
\end{equation}
Since $w$ solves \eqref{ann1.4} for some $\nu$ if and only if $w(-x_1, x_2,...,x_N)$ solves the same equation with $\nu$ replaced by $-\nu$. Therefore, we assume that $\nu \geq 0$.

For the GP equation, \cite{JPR1986} obtained the energy momentum curve numerically in dimension $N=2$ and 3. See figure \ref{fig1}.

\begin{figure}
    \centering
    \includegraphics[width=1\textwidth]{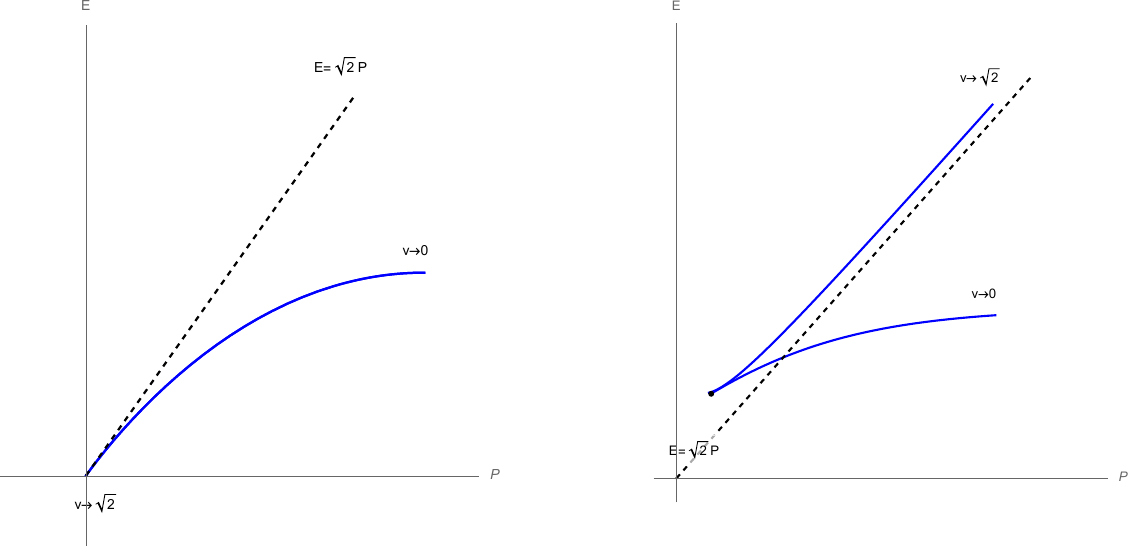} 
    \caption{E-P relation for the GP equation. $N=2$ in the left figure, $N=3$ in the right figure.}
\label{fig1}
\end{figure}

We list some existence results for the GP equation. When $N=2$ and $\nu$ is small, the existence was shown in \cite{BS1999}.
For $N \geq 3$, \cite{BOS2004} proved the existence result for a sequence of velocities by minimizing $E$ at fixed $P$.
\cite{BGS2009} showed the existence in $N=2,~3$ also by minimizing $E$ at fixed $P$. 
\cite{CM2017} used this strategy to $N \geq 2$ for general nonlinearities with $V \geq 0$, where the orbital stability of these minimizers is also obtained.
The drawback of this method is that it is not known whether the velocity covers the whole interval $(0, \sqrt{2})$.

For $N \geq 3$,
remarkably, \cite{Maris2013} showed the existence for any $\nu \in(0, \sqrt{2})$ for general nonlinearities, where 
the approach is minimizing $E + \nu P$ under a Pohozaev constraint. \cite{CM2017} also obtained a family of 
finite energy solitons by minimizing $E+P$ under constant $\int_{\mathbb{R}^N} |\nabla \Psi|^2dx$ for $N \geq 2$.

\cite{G2003} established the nonexistence for supersonic velocities.
This result is extended to general nonlinearities in \cite{Maris2008}.

The $E-P$ curve in Figure \ref{fig1} indicates that there exist solutions of small enough $E$ and $P$ when $N=2$. \cite{BGS2009} showed the existence of such solutions. For general nonlinearities, see \cite{CM2017}.
However, when $N=3$, \cite{JPR1986} showed that $E$ and $P$ are bounded from below by positive constants for the GP equation.

For the stability of the traveling waves, \cite{JPR1986} conjectured that 2-dimensional traveling waves (see left part of Figure \ref{fig1}) are stable. 
In the case where $N=3$ (see right part of Figure \ref{fig1}), the lower branch exhibits stability, whereas the upper branch displays instability.

\medskip

The following assumptions will be assumed for the nonlinear term.

(Ass1) $G \in C^0([0, \infty), \mathbb{R})$, $C^{1}$ when close to $1$, $G(1)=0$, $G^{\prime}(1)=-1$.

(Ass2) $|G(x)| \leq c\left(1+x^{p_0}\right)$ for constant $c>0$, $p_0<\frac{2}{N-2}$, and $x\geq 0$.

(Ass3) $G(x) \leq -c x^{p_1}$ for constant $c>0$, $p_1>0$ and $x \geq r_0>1$.

\medskip

We now give some notations. $x=(x_1, \cdots, x_N)$, $x^{\perp}=(x_2, \cdots, x_N)$. 
$\langle z_1, z_2 \rangle$ represents the scalar product in the complex plane.
For $f$ defined on $\mathbb{R}^N$, $\lambda_1, \lambda_2>0$, a dilation of $f$ is denoted by
\begin{equation}
\label{ann1.5}
\begin{aligned}
f_{\lambda_1, \lambda_2}=f\left(\frac{x_1}{\lambda_1}, \frac{x^{\perp}}{\lambda_2}\right).
\end{aligned}
\end{equation}
Let $\Theta \in C^{\infty}(\mathbb{R})$ be an odd function,
$$
\Theta(x)= \begin{cases}x & 0 \leq x \leq 2, \\ 3 & x \geq 4,\end{cases}
$$
and $\Theta^{\prime} \in[0,1]$ on $\mathbb{R}$. 
We define the modified Ginzburg-Landau energy functional of $w \in H_{\text{loc}}^{1}(\mathbb{R}^N)$ in an open set $U \subset \mathbb{R}^N$ to be
\begin{equation}
\label{ann1.8}
\begin{aligned}
E_{\operatorname{Mod}}^{U}(w)=\int_{U}|\nabla w|^2+\frac{1}{2}\left(\Theta^2(|w|)-1\right)^2 d x.
\end{aligned}
\end{equation}
When $U = \mathbb{R}^N$, 
we write $E_{\operatorname{Mod}}(w)$ instead of $E_{\operatorname{Mod}}^{\mathbb{R}^N}(w)$.

We denote
\begin{equation}
\label{ann1.9}
\begin{aligned}
\mathcal{E}=\left\{w \in L_{\text{loc}}^1(\mathbb{R}^N) ~\big|~ \nabla w \in L^2(\mathbb{R}^N), \Theta^2(|w|)-1 \in L^2(\mathbb{R}^N)\right\}.
\end{aligned}
\end{equation}
As in (\cite{CM2017}, p. 154), we have
$$
\mathcal{E}=\left\{w \in L_{\text{loc}}^1(\mathbb{R}^N) ~\big|~ \nabla w \in L^2(\mathbb{R}^N),~|w|-1 \in L^2(\mathbb{R}^N)\right\}.
$$

We now introduce the definition of the momentum.
Let $\mathcal{Z}$ denote the space
$\mathcal{Z}=\{\partial_{x_1} v ~\big|~ v \in \dot{H}^{1}(\mathbb{R}^N)\}$.
The norm on $\mathcal{Z}$ is
$\|\partial_{x_1} v\|_{\mathcal{Z}}=\|\nabla v\|_{L^2(\mathbb{R}^N)}$.
It is clear that $(\mathcal{Z},\|\cdot\|_{\mathcal{Z}})$ is a Hilbert space.
For $N \geq 2$ and $w \in \mathcal{E}$, it is shown in section 2 of \cite{Maris2013} and Lemma 2.1 in \cite{CM2017} that
$
\langle i w_{x_{1}}, w\rangle \in L^{1}(\mathbb{R}^N)+\mathcal{Z}.
$
For $u \in L^{1}(\mathbb{R}^N)$ and $v \in \mathcal{Z}$, define
$
L(u+v)=\int_{\mathbb{R}^N} u(y) d y.
$
For $N \geq 2$, the momentum in the $x_1$-direction is defined to be
\begin{equation}
\label{ann1.12_0}
\begin{aligned}
P(w)=L\left(\left\langle i w_{x_{1}}, w\right\rangle\right).
\end{aligned}
\end{equation}
See Definition 2.2 in \cite{CM2017}.

We define the functionals:
\begin{equation}
\label{ann1.12}
\begin{aligned}
T(w)=\int_{\mathbb{R}^N} \sum_{i=2}^N\left|\frac{\partial w}{\partial x_i}\right|^2 d x,
\end{aligned}
\end{equation}
\begin{equation}
\label{ann1.13}
\begin{aligned}
J(w)=- \left[ \int_{\mathbb{R}^N}\left|\frac{\partial w}{\partial x_1}\right|^2+V\left(|w|^2\right) d x+\nu P(w) \right],
\end{aligned}
\end{equation}
\begin{equation}
\label{ann1.14}
\begin{aligned}
\operatorname{Poh}(w)=\frac{N-3}{N-1} T(w)-J(w).
\end{aligned}
\end{equation}
The action is
$$
\begin{aligned}
S(w)  =E(w)+\nu P(w) 
 =\int_{\mathbb{R}^N}|\nabla w|^2+V\left(|w|^2\right) d x+ \nu P(w),
\end{aligned}
$$
where $P(w)$ is the momentum along the $x_1$-direction as defined in \eqref{ann1.12_0}.
If (Ass1) and (Ass2) hold, Proposition 4.1 in \cite{Maris2008} showed that any solution of \eqref{ann1.4} satisfies a Pohozaev identity $\operatorname{Poh}(w)=0$. Using
$$
\left.\frac{d}{d \lambda}\right|_{\lambda=1} S\left(w_{1, \lambda}\right)=0,
$$
we obtain the Pohozaev identity.

\medskip

The main theorem is presented below.

\begin{Theorem}
\label{annthm1.1}
Suppose $N \geq 4$, $0<\nu<\sqrt{2}$, $(Ass1)$, and $(Ass2)$ are satisfied. Then:

\medskip

(i) For any $j>0$, $\{w \in \mathcal{E} ~\big|~ J(w)=j>0\}$ is not empty.

\medskip

(ii) Let $\{w_n\}_{n \geq 1} \subset \mathcal{E}$
satisfy
$$
\begin{aligned}
 J\left(w_n\right) \rightarrow j>0 \quad \text { and } \quad
 T\left(w_n\right) \rightarrow C_0 S_0^{\frac{2}{N-1}} j^{\frac{N-3}{N-1}} \quad \text { as } n \rightarrow \infty .
\end{aligned}
$$
Then up to subsequence and translation, there exists $w \in \mathcal{E}$ such that
$$
\nabla w_n \rightarrow \nabla w \quad \text { and } \quad
\left|w_n\right|-1 \rightarrow|w|-1 \quad \text { in }
L^2\left(\mathbb{R}^N\right),
$$
and
we have
$$
T(w)=C_0 S_0^{\frac{2}{N-1}} j^{\frac{N-3}{N-1}}, \quad
J(w)=j .
$$
That is,
$w$ minimizes $T$ under $J(w)=j$.

\medskip

(iii) Let $w$ minimize $T$ in the set
$\{w \in \mathcal{E} ~\big|~ J(w)=j>0\}$.
Let $\lambda_0(j)=(\frac{N-3}{2})^{\frac{1}{N-1}} S_0^{\frac{1}{N-1}} j^{-\frac{1}{N-1}}$.
Then
$w_{1, \lambda_0}$ is a solution of \eqref{ann1.4}.
In addition, $w_{1, \lambda_0}$ is a least action solution of \eqref{ann1.4}.

\end{Theorem}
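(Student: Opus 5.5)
The proof rests on one scaling computation that ties the problem ``minimize $T$ subject to $J=j$'' to Mariş's problem of minimizing $S$ on the Pohozaev manifold. I take $S_0$ to be Mariş's minimal value
$$S_0=\inf\{S(u) ~\big|~ u\in\mathcal{E},\ u\ \text{non-constant},\ \operatorname{Poh}(u)=0\},$$
which \cite{Maris2013} shows is positive, is attained, and equals the least action among solutions of \eqref{ann1.4}. I assume $C_0$ is the constant produced by the scaling below.

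\textbf{Scaling laws.} Only $x^{\perp}$ is dilated, and $P$ involves only $\partial_{x_1}$ (both its $L^1$ and its $\mathcal{Z}$ components scale by the Jacobian). A change of variables therefore gives
$$T(w_{1,\lambda})=\lambda^{N-3}T(w),\qquad J(w_{1,\lambda})=\lambda^{N-1}J(w).$$
Since $S=T-J$, we have $\operatorname{Poh}(w_{1,\lambda})=0$ exactly when $\frac{N-3}{N-1}\lambda^{N-3}T(w)=\lambda^{N-1}J(w)$. For $J(w)=j>0$ (and hence $T(w)>0$) this singles out
$$\lambda^2=\frac{(N-3)T(w)}{(N-1)j}.$$
At this $\lambda$,
$$S(w_{1,\lambda})=\frac{2}{N-1}\lambda^{N-3}T(w)\ge S_0.$$
Substituting $\lambda$ gives $T(w)^{\frac{N-1}{2}}\ge c_N S_0\, j^{\frac{N-3}{2}}$, that is,
$$T(w)\ge C_0S_0^{\frac{2}{N-1}}j^{\frac{N-3}{N-1}}$$
with an explicit $C_0=C_0(N)$. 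Conversely, let $u$ be a Mariş minimizer. Because $N\ge4$, $\operatorname{Poh}(u)=0$ forces $J(u)=\frac{N-3}{N-1}T(u)>0$, and $S(u)=\frac{2}{N-1}T(u)=S_0$. Rescaling $u_{1,\mu}$ with $\mu^{N-1}J(u)=j$ then gives an element with $J=j$ realizing equality. This proves (i) (any $j>0$ is reached this way) and identifies $C_0S_0^{2/(N-1)}j^{(N-3)/(N-1)}$ as the infimum of $T$ on $\{J=j\}$.

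\textbf{Compactness, part (ii).} Given $w_n$ as in the statement, set $\lambda_n^2=\frac{(N-3)T(w_n)}{(N-1)J(w_n)}$. Then $\lambda_n\to\lambda_*>0$, and $u_n=(w_n)_{1,\lambda_n}$ satisfies $\operatorname{Poh}(u_n)=0$ exactly and $S(u_n)\to S_0$ by the computation above. So $(u_n)$ is a minimizing sequence for Mariş's constrained problem. I would invoke the concentration-compactness theorem of \cite{Maris2013} (minimizing sequences on the Pohozaev set are precompact up to translation in the sense $\nabla u_n\to\nabla u$, $|u_n|-1\to|u|-1$ in $L^2$). Undoing the dilation with $\lambda_n\to\lambda_*$ then transfers this convergence to $w_n\to w:=u_{1,1/\lambda_*}$. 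Continuity of $T$, $J$ (including $P$ via the $L^1+\mathcal{Z}$ decomposition) under this convergence yields $T(w)$ equal to the minimal value and $J(w)=j$. This is the step I expect to be the main obstacle. The delicate point is continuity of $P$ and of $\int V(|w|^2)$ under the stated convergence, and checking that Mariş's compactness statement applies exactly in the form needed. If it does not, I would redo his argument: rule out vanishing using $j>0$, and rule out dichotomy by the strict subadditivity $j\mapsto j^{(N-3)/(N-1)}$, which is concave.

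\textbf{Part (iii).} If $w$ minimizes $T$ on $\{J=j\}$, then $T(w)=C_0S_0^{2/(N-1)}j^{(N-3)/(N-1)}$. The $\lambda$ with $\operatorname{Poh}(w_{1,\lambda})=0$ is then computed from this value, and I expect it to coincide with $\lambda_0(j)$ as stated. By the equality case above, $w_{1,\lambda_0}$ attains $S_0$ on the Pohozaev set. Mariş's theorem then shows that such a minimizer solves \eqref{ann1.4}, the Lagrange multiplier vanishing by the scaling argument. It also shows that $S_0$ is the least action among solutions, so $w_{1,\lambda_0}$ is a least action solution.
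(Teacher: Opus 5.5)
Your proposal is correct, but it takes a genuinely different route from the paper. You use the dilation $w\mapsto w_{1,\lambda}$, $\lambda^2=\frac{(N-3)T(w)}{(N-1)J(w)}$, as an exact correspondence between ``minimize $T$ on $\{J=j\}$'' and Mariş's ``minimize $S$ on the Pohozaev set''. You then import each part from \cite{Maris2013}: non-emptiness and attainment for (i) and for the upper bound on $T_{\min}(j)$; his compactness theorem for Pohozaev-constrained minimizing sequences for (ii); and his result that such minimizers solve \eqref{ann1.4} for (iii).

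The paper uses Mariş's problem only to get the lower bound of Lemma \ref{bbmlem3.2}. Otherwise it works directly with the $T$-problem and borrows only technical lemmas:
\begin{itemize}
\item (i) is proved with the explicit test functions of Lemma \ref{annlem4.4}.
\item (ii) is a self-contained concentration-compactness argument. Vanishing is excluded via Lemma \ref{bbm2024lem1.2} after the dilation in Lemma \ref{annlem5.4}. Dichotomy is excluded by strict subadditivity of $T_{\min}(j)=j^{\frac{N-3}{N-1}}T_{\min}(1)$. The limit is identified with Lemmas 4.11--4.12 of \cite{CM2017}.
\item (iii) is a direct Lagrange multiplier argument showing the multiplier is positive and absorbing it by the dilation $\lambda_0$. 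Least action is checked by hand in Remark \ref{lionremII.5}.
\end{itemize}

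The paper's route thus gives an independent existence proof, with a new way to rule out vanishing. It does not rely on Mariş's compactness theorem or on his result that constrained minimizers are solutions. Yours is much shorter and makes transparent that $\mathscr{J}$ is just $\mathscr{P}$ up to a dilation in $x^{\perp}$, which the paper only establishes afterwards in Section \ref{cmsec8}.

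Keep your converse rescaling explicit: it gives $T_{\min}(j)\le C_0S_0^{\frac{2}{N-1}}j^{\frac{N-3}{N-1}}$. The paper uses the resulting equality, e.g.\ for the explicit $\lambda_0(j)$ in Proposition \ref{annprop5.6}, without ever stating this upper bound.

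The step you flag as the main obstacle in (ii) is not one.
\begin{itemize}
\item Mariş's theorem applies, since $\operatorname{Poh}(u_n)=0$ exactly, $u_n$ is non-constant and $S(u_n)\to S_0$.
\item The limit $u$ satisfies $\operatorname{Poh}(u)=0$, so the scaling laws give $J(w)=\lambda_*^{-(N-1)}J(u)=\lambda_*^{-2}\frac{N-3}{N-1}T(w)=j$.
\item $T(w)=\lim T(w_n)$ follows from the $L^2$ convergence of $\nabla_{x^\perp}w_n$.
\end{itemize}
So no continuity of $P$ or of $\int V(|w|^2)$ is needed. Likewise, the Pohozaev dilation in (iii) is indeed $\lambda_0(j)$, because $\frac{N-3}{N-1}C_0=\left(\frac{N-3}{2}\right)^{\frac{2}{N-1}}$.
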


\medskip

The constants $C_0$ and $S_0$ in the above theorem are defined in Lemma \ref{bbmlem3.2} and Lemma \ref{annlem4.7}, respectively.
Part (i) is Lemma \ref{cmthm8.1_1}, (ii) is a consequence of
Theorem \ref{annthm5.3}, and (iii) is proved in Proposition \ref{annprop5.6} and Remark \ref{lionremII.5}.

\begin{Corollary}
\label{anncor1.2} 
Let $N \geq 4$, $0<\nu<\sqrt{2}$, $(Ass1)$ and $(Ass3)$ hold. Then we also have the existence result for \eqref{ann1.1}.

\end{Corollary}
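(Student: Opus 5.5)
The plan is the standard truncation argument. We replace $G$ by a modified nonlinearity $\tilde G$ that agrees with $G$ on a large interval $[0,R]$ and satisfies (Ass2). We then apply Theorem \ref{annthm1.1} to $\tilde G$, and finally show by a maximum principle that the resulting solution satisfies $|w|^2\le R$. It therefore solves \eqref{ann1.4} with the original $G$.

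\emph{Step 1: choice of the truncation.} By (Ass3), $G(x)\le -c x^{p_1}\to-\infty$ as $x\to\infty$. Hence we can fix $R\ge \max(r_0,2)$ such that $G(x)\le -1$ for all $x\ge R$. Define $\tilde G(x)=G(x)$ for $0\le x\le R$ and $\tilde G(x)=G(R)$ for $x>R$. Then $\tilde G$ is continuous on $[0,\infty)$. It coincides with $G$ near $1$, so (Ass1) holds with $\tilde G(1)=0$ and $\tilde G'(1)=-1$. It is bounded, so (Ass2) holds for any admissible $p_0$. Moreover $\tilde G\le -1$ on $[R,\infty)$. Theorem \ref{annthm1.1}(iii), applied with $\tilde G$ and $\tilde V(x)=\int_x^1\tilde G$, gives for each $\nu\in(0,\sqrt2)$ a finite-energy solution $w\in\mathcal E$ of
\[
-i\nu w_{x_1}+\Delta w+\tilde G(|w|^2)w=0 .
\]

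\emph{Step 2: regularity and behaviour at infinity.} We invoke the regularity theory for finite-energy solutions (as in \cite{Maris2008}, valid since $\tilde G$ is bounded): $w\in W^{2,p}_{\rm loc}$ for all $p<\infty$, $w\in C^{1,\alpha}$, $\nabla w$ is bounded, and $|w(x)|\to1$ as $|x|\to\infty$. Set $\rho=|w|^2$. Then $\rho\in W^{2,p}_{\rm loc}$ and $\rho\to 1$ at infinity.

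\emph{Step 3: maximum principle.} We compute
\[
\Delta\rho=2|\nabla w|^2+2\langle \Delta w,w\rangle=2|\nabla w|^2-2\tilde G(\rho)\rho+2\nu\langle i w_{x_1},w\rangle .
\]
The cross term satisfies $2\nu|\langle i w_{x_1},w\rangle|\le 2|\nabla w|^2+\frac{\nu^2}{2}\rho$. This yields
\[
\Delta\rho\ge \rho\Bigl(-2\tilde G(\rho)-\frac{\nu^2}{2}\Bigr).
\]
On the open set $\Omega=\{\rho>R\}$ we have $-2\tilde G(\rho)\ge 2>\nu^2/2$, so $\Delta\rho>0$ a.e. in $\Omega$. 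Since $\rho\to1<R$ at infinity, $\Omega$ is bounded, and $\rho=R$ on $\partial\Omega$. The weak maximum principle for $W^{2,N}_{\rm loc}$ subsolutions then gives $\rho\le R$ in $\Omega$, which contradicts the definition of $\Omega$ unless $\Omega=\emptyset$. Thus $|w|^2\le R$ everywhere. Since $\tilde G=G$ on $[0,R]$, $w$ solves \eqref{ann1.4} with the original $G$.

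The main point requiring care is Step 2: the regularity and the limit $|w|\to1$ needed to apply the maximum principle. For the bounded nonlinearity $\tilde G$, this follows from the elliptic estimates in \cite{Maris2008}, so the argument should go through without further difficulty.
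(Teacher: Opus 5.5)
Your proposal is correct and is essentially the argument the paper defers to (\cite{Maris2013}, p.~113). There, too, one truncates $G$ far out so that the modified nonlinearity satisfies (Ass1)--(Ass2), applies Theorem~\ref{annthm1.1}, and then uses an a priori $L^\infty$ bound on the resulting solution to show it also solves \eqref{ann1.4} with the original $G$; you obtain that bound from $\Delta|w|^2\ge |w|^2\bigl(-2\tilde G(|w|^2)-\nu^2/2\bigr)$ and the maximum principle.
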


Using an argument in (\cite{Maris2013}, p. 113), we see that Corollary \ref{anncor1.2} can be deduced from Theorem \ref{annthm1.1}.

For the GP equation, $G(x)=1-x$, (Ass1) and (Ass3) hold. Using Corollary \ref{anncor1.2}, we see that the GP equation has nontrivial finite energy solitons in $N \geq 4$, with $0<\nu<\sqrt{2}$.

\medskip

We then give some sketches of the proofs. 
We have mentioned that the finite energy solution to \eqref{ann1.4} does not exist for $\nu>\sqrt{2}$, \cite{Maris2013, CM2017} provided an understanding of this fact. 
Fix $w \in \mathcal{E}$ satisfies $\left| |w|-1 \right| \leq \delta$ for some $\delta>0$, such that $w$ can be written as $w=r e^{i \varphi}$.
Let $\nu<\sqrt{2}(1-2 \varepsilon)(1-\delta)$ for some $\varepsilon>0$. It can be shown that
\begin{equation}
\label{ann1.11}
\begin{aligned}
|\nu P(w)|+\varepsilon E_{\text {Mod}}(w) \leq E(w) .
\end{aligned}
\end{equation}
We anticipate obtaining a similar estimate for all functions exhibiting a small $E_{\text{Mod}}$. 
Since $\||w|-1\|_{L^{\infty}(\mathbb{R}^N)}$ can not be bounded by $E_{\text{Mod}}(w)$, we need to approximate $w$ by some function $u_h$, as in Section \ref{annsec34}. 
This regularization procedure has been fully developed in \cite{Maris2013, CM2017}.

Our strategy is minimizing $T$ under the constraint
$$
\{w \in \mathcal{E} ~\big|~ J(w)=j, ~j>0\}
$$
in $N \geq 4$.
We first need to show that the set $\{w \in \mathcal{E} ~\big|~  J(w)=j\} $ is not empty
for any $j>0$. We prove this fact in Lemma \ref{cmthm8.1_1}.
We then need to show that
$$
\inf \{T(w) ~\big|~ w \in \mathcal{E}, ~J(w)=j, ~j>0\}>0 .
$$
We define
$$
S_0:=\inf \left\{S(w) ~\big|~ w \text{ is not constant}, ~\text{Poh}(w)=0\right\}.
$$
Lemma \ref{annlem4.7} implies that $S_0>0$. Define
$$
\mathcal{W}:=\{w \in \mathcal{E} ~\big|~ J(w)>0\}.
$$
Then Lemma \ref{bbmlem3.2} shows that for any $w \in \mathcal{W}$,
$$
T(w) \geq 2^{-\frac{2}{N-1}} (N-1) (N-3)^{-\frac{N-3}{N-1}} S_0^{\frac{2}{N-1}}(J(w))^{\frac{N-3}{N-1}},
$$
which is a Sobolev-type inequality. Using this inequality, we see that
$$
\begin{aligned}
 \inf \{T(w) ~\big|~ w \in \mathcal{E}, ~J(w)=j>0\} 
 \geq \left(\frac{1}{2}\right)^{\frac{2}{N-1}} \frac{N-1}{(N-3)^{\frac{N-3}{N-1}}} S_0^{\frac{2}{N-1}} j^{\frac{N-3}{N-1}}>0 .
\end{aligned}
$$

\medskip

In Section \ref{annsec5}, we consider the existence of case $N \geq 4$. We use the concentration-compactness principle to show the existence of minimizers of functional $T$ under fixed $J$. 
We first exclude the vanishing of the minimizing sequences. If $\{w_n\}_{n \geq 1}$ is a minimizing sequence of $T$ on $\{w ~|~ w \in \mathcal{E}, ~J(w)=j>0\}$ that vanishes, that is
$$
\sup _{x \in \mathbb{R}^N} E_{\operatorname{Mod}}^{B(x, 1)}\left(w_n\right) \rightarrow 0 \quad \text { as } n \rightarrow \infty.
$$
We choose an appropriate scaling $\widetilde{w}_n=(w_n)_{1, \lambda_2}$ as in Lemma \ref{annlem5.4} such that
\begin{equation}
\label{ann1.15}
\begin{aligned}
\limsup _{n \rightarrow \infty} S\left(\widetilde{w}_n\right)<0 .
\end{aligned}
\end{equation}
In addition,
$$
\sup _{x \in \mathbb{R}^N} E_{\operatorname{Mod}}^{B(x, 1)}\left(\widetilde{w}_n\right) \rightarrow 0 \quad \text { as } n \rightarrow \infty.
$$
We show in Lemma \ref{bbm2024lem1.2} that
\begin{equation}
\label{ann1.16_2}
\begin{aligned}
S\left(\widetilde{w}_n\right) & =E_{\operatorname{Mod}}\left(\widetilde{w}_n\right)+\nu P\left(\widetilde{w}_n\right)+o(1) \\
& \geq E_{\operatorname{Mod}}\left(u_n\right)+\nu P\left(u_n\right)+o(1) \\
& \geq 0,
\end{aligned}
\end{equation}
where $u_n$ is an appropriate approximation of $\widetilde{w}_n$ described in Section \ref{annsec34} and $n$ is sufficiently large. Then equation \eqref{ann1.15} and equation \eqref{ann1.16_2} give a contradiction. Therefore, vanishing is excluded.

If dichotomy occurs, we show that there exist two functions $w_{n, 1}, w_{n, 2}$ such that
\begin{equation}
\label{ann1.17}
\begin{aligned}
& \lim _{n \rightarrow \infty}\left|T\left(w_n\right)-T\left(w_{n, 1}\right)-T\left(w_{n, 2}\right)\right|=0, \\
& \lim _{n \rightarrow \infty}\left|J\left(w_n\right)-J\left(w_{n, 1}\right)-J\left(w_{n, 2}\right)\right|=0.
\end{aligned}
\end{equation}
It can be shown that, up to a subsequence, there exist $0<j_1, j_2<j$, such that $j_1+j_2=j$, and
$$
\lim _{n \rightarrow \infty} J\left(w_{n, 1}\right)=j_1, \quad \lim _{n \rightarrow \infty} J\left(w_{n, 2}\right)=j_2.
$$
Then for the function
$$
T_{\min}(j)=\inf \{T(w) ~\big|~ w \in \mathcal{E}, ~J(w)=j>0\},
$$
we show that
$$
T_{\min}(j) \geq T_{\min}\left(j_1\right)+T_{\min}\left(j_2\right).
$$
While, using Lemma \ref{cmlem4.7}, we see that
$$
T_{\min}(j)<T_{\min}\left(j_1\right)+T_{\min}\left(j_2\right),
$$
a contradiction. Therefore, the dichotomy is excluded.

As a consequence, concentration occurs, it follows that $\{w_n\}_{n \geq 1}$ has a convergent subsequence, and the limit is the desired minimizer.

In Proposition \ref{annprop5.6}, we show how to obtain solutions to \eqref{ann1.4} from the corresponding minimization problem in $N \geq 4$.

The case $N=3$ is different since the scaling properties are different. The Pohozaev identity becomes
$$
\begin{aligned}
J(w)=0 .
\end{aligned}
$$
The minimization problem becomes
$$
\inf \{T(w) ~\big|~ w \in \mathcal{E}, ~w \text{ is not constant}, ~J(w)=0\}.
$$
This minimization problem was solved in \cite{Maris2013}.
In Section \ref{annsec6}, we compare the results of $N=3$ and $N \geq 4$.

In Section \ref{annsec7}, the symmetry properties of the traveling waves are analyzed.

An important question is: What is the relationship between the families of solitons obtained using different minimization procedures? 
We use $\mathscr{P}$ to denote the minimizers found by minimizing the action $S$ under $\operatorname{Poh}=0$ in \cite{Maris2013}. 
Let $\mathscr{J}$ denote the minimizers obtained by minimizing $T$ under fixed $J$ in this paper. 
We use $\mathscr{Q}$ to represent the minimizers found by minimizing $E$ at fixed $P$ in \cite{CM2017}. 
We show in Section \ref{cmsec8} that $\mathscr{Q} \subset \mathscr{J} \subset \mathscr{P}$ under some conditions. 
In addition, we show that $\mathscr{P} \subset \mathscr{J}$ under certain conditions.

When $N=2$, the scaling implies
$$
\begin{aligned}
& T\left(w_{1, \lambda}\right)=\frac{1}{\lambda} T(w), \\
& J\left(w_{1, \lambda}\right)=\lambda J(w).
\end{aligned}
$$
The methods for $N \geq 4$ cannot be applied.
In \cite{CM2017}, the existence of $2D$ traveling waves was proved for a set of speeds, which is not known to be all the elements in $(0, \sqrt{2})$ or not. 
It remains an open question whether solitons exist for all speeds $\nu \in(0, \sqrt{2})$.

If $\nu=0$, (Ass1) and (Ass2) hold, then \eqref{ann1.4} possesses finite energy solutions precisely when $V$ achieves negative values. 
Our proofs show that $T$ has a minimizer in $\{w \in \mathcal{E} ~|~ J(w)=j>0\}$ whenever $N \geq 4$ and the set is not empty. 
Then following our proofs we see that the minimizers satisfy \eqref{ann1.4} for $\nu=0$ after an appropriate scaling.

\section{Approximation Process}
\label{annsec34}

An approximation device is important to study the Ginzburg-Landau energy functional in order to get rid of its topological defects. 
This device was used in \cite{AB1998} and was further developed in \cite{Maris2013, CM2017}.

Consider an open set $U \subseteq \mathbb{R}^n$. For $w \in \mathcal{E}$ and $h>0$, define
$$
A_{h, U}^w(u)=E_{\text{Mod}}^U(u)+\frac{1}{h^2} \int_U \Theta\left(|w-u|^2\right) d x .
$$
If $u \in \mathcal{E}$ and $w-u \in L^2(U)$, then $A_{h, U}^w(u)<+\infty$. Set
$$
\begin{aligned}
& H_0^{1}(U)=\left\{w \in H^{1}\left(\mathbb{R}^N\right) ~\big|~ w=0 \text { on } \mathbb{R}^N \setminus  U\right\}, \\
& H_w^{1}(U)=\left\{u \in\mathcal{E} ~\big|~ w-u \in H_0^{1}(U)\right\}.
\end{aligned}
$$

We denote $V_{\operatorname{Mod}}(v)=(\Theta^2(|v|)-1)^2$.

\begin{Lemma}
\label{annlem3.1}
(i) $A_{h, U}^w$ admits a minimizer in $H_w^{1}(U)$. Denote the minimizer by $u_h$.

(ii)
\begin{equation}
\label{ann3.1}
    E_{\operatorname{Mod}}^U\left(u_h\right) \leq E_{\operatorname{Mod}}^U(w);
\end{equation}
\begin{equation}
\label{ann3.2}
    \int_U\left|u_h-w\right|^2 d x \leq h^2 E_{\operatorname{Mod}}^U(w) 
+c_1 h^{\frac{4}{N}}\left( E_{\operatorname{Mod}}^U(w)\right)^{\frac{N+2}{N}};
\end{equation}
\begin{equation}
\label{ann3.3}
    \int_U \left|V_{\operatorname{Mod}}\left(u_h\right)-V_{\operatorname{Mod}}(w)\right| d x \leq c_2 h E_{\operatorname{Mod}}^U(w);
\end{equation}
\begin{equation}
\begin{aligned}
\label{ann3.4}
\big| P\left(u_h\right)-P(w) \big| \leq c_3\left(h^2+h^{\frac{4}{N}}\left(E_{\operatorname{Mod}}^U(w)\right)^{\frac{2}{N}}\right)^{\frac{1}{2}} E_{\operatorname{Mod}}^U(w).
\end{aligned}
\end{equation}

(iii) $u_h$ satisfies the following equation:
\begin{equation}
\begin{aligned}
\label{ann3.5}
 -\Delta u_h+\left(\Theta^2(\left|u_h\right|)-1\right) \Theta(\left|u_h\right|) \Theta^{\prime}(\left|u_h\right|) \frac{u_h}{\left|u_h\right|} 
 +\frac{1}{h^2} \Theta^{\prime}\left(\left|u_h-w\right|^2\right)\left(u_h-w\right)=0 .
\end{aligned}
\end{equation}
We have $u_h \in W_{\text{loc}}^{2, p}\left(\mathbb{R}^N\right)$ for $1 \leq p<\infty$; hence,
$$
u_h \in C_{\text{loc}}^{1, \alpha}\left(\mathbb{R}^N\right) \quad \text { for } \quad 0 \leq \alpha<1.
$$

(iv) There exists a constant $C(N, h, \varepsilon, r_0)$, such that for any $w \in \mathcal{E}$ satisfying
$E_{\operatorname{Mod}}^U(w) \leq C(N, h, \varepsilon, r_0)$, there holds
$$
\left|\left|u_h(x)\right|-1\right| < \varepsilon,
$$
where $x \in U$ with distance $(x, \partial U)>4 r_0$, $h>0, ~\varepsilon>0$ and $~r_0>0$ are arbitrary positive real number.

\end{Lemma}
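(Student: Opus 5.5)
This lemma restates the regularization device of \cite{AB1998, Maris2013, CM2017}. I will prove it by the direct method, comparing $u_h$ with the competitor $w$, and deriving the pointwise estimate (iv) from elliptic regularity.

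For (i), take a minimizing sequence $(u_k)\subset H_w^1(U)$ of $A^w_{h,U}$. The infimum is finite, since $u=w$ gives the value $E^U_{\operatorname{Mod}}(w)$. Write $u_k=w+\phi_k$ with $\phi_k\in H^1_0(U)$. Then $\nabla\phi_k$ is bounded in $L^2$.

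\begin{itemize}
\item On the set where $|\phi_k|^2\le 2$ we have $\Theta(|\phi_k|^2)=|\phi_k|^2$, which controls $\|\phi_k\|_{L^2}$ there.
\item On the complement the penalization is at least $2$, so that set has bounded measure. Sobolev's inequality then controls $\phi_k$ on it.
\end{itemize}

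Hence $\phi_k$ is bounded in $H^1$. A subsequence converges weakly in $H^1$ and a.e. The gradient term is weakly lower semicontinuous, and the two potential terms are handled by Fatou's lemma, giving a minimizer $u_h$.

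For (ii), \eqref{ann3.1} is immediate from $A^w_{h,U}(u_h)\le A^w_{h,U}(w)=E^U_{\operatorname{Mod}}(w)$, which also gives
\[
\int_U\Theta(|u_h-w|^2)\,dx\le h^2E^U_{\operatorname{Mod}}(w).
\]
For \eqref{ann3.2}, split $U$ according to whether $|u_h-w|^2\le 2$.
\begin{itemize}
\item On $\{|u_h-w|^2\le2\}$ the integrand equals $|u_h-w|^2$, which yields the term $h^2E^U_{\operatorname{Mod}}(w)$.
\item On the complement $\Omega$, we have $|\Omega|\le h^2E/2$. H\"older's inequality and the Sobolev bound $\|u_h-w\|_{L^{2^*}}\lesssim\|\nabla(u_h-w)\|_{L^2}\lesssim E^{1/2}$ give
\[
\int_\Omega|u_h-w|^2\le|\Omega|^{2/N}\|u_h-w\|^2_{L^{2^*}}\lesssim h^{4/N}E^{(N+2)/N}.
\]
\end{itemize}

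For \eqref{ann3.3}, note that $V_{\operatorname{Mod}}$ is Lipschitz-like: $|V_{\operatorname{Mod}}(a)-V_{\operatorname{Mod}}(b)|\le C\,|\Theta^2(|a|)-1|+|\Theta^2(|b|)-1|)\,|a-b|$, using that $\Theta$ is bounded with bounded derivative. Cauchy--Schwarz then gives a bound
\[
C\bigl(E(u_h)^{1/2}+E(w)^{1/2}\bigr)\Bigl(\int\min(|u_h-w|^2,\cdot)\Bigr)^{1/2}\lesssim hE.
\]
This step uses the bounded-difference trick to avoid the $h^{4/N}$ term. For \eqref{ann3.4}, write
\[
P(u_h)-P(w)=L\bigl(\langle i(u_h-w)_{x_1},u_h\rangle+\langle iw_{x_1},u_h-w\rangle\bigr).
\]
Integrate by parts, which is justified since $u_h-w\in H^1_0$, so that each piece is bounded by $\|u_h-w\|_{L^2}(\|\nabla u_h\|_{L^2}+\|\nabla w\|_{L^2})$. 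Then insert \eqref{ann3.2} and \eqref{ann3.1}.

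For (iii), compute the Euler--Lagrange equation by varying $u_h+t\psi$ with $\psi\in C_c^\infty(U)$; this gives \eqref{ann3.5}. Every nonlinear term in \eqref{ann3.5} is bounded, because $\Theta$ and $\Theta'$ are bounded, $\Theta'(|u_h-w|^2)=0$ when $|u_h-w|^2\ge 4$, and the remaining factor satisfies $|u_h-w|\le 2$ on the support of $\Theta'$. Hence $\Delta u_h\in L^\infty_{\text{loc}}$. Calder\'on--Zygmund estimates give $W^{2,p}_{\text{loc}}$ for all $p$, and Morrey's embedding gives $C^{1,\alpha}_{\text{loc}}$.

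For (iv), which I expect to be the main obstacle, the quantitative $L^\infty$ bound on $\Delta u_h$ (depending only on $h$) gives, via interior $W^{2,p}$ estimates, a uniform bound on $\|\nabla u_h\|_{L^\infty(B(x,r_0))}$ whenever $B(x,2r_0)\subset U$. This bound depends on $h$, $N$, $r_0$, and on local $L^2$ norms that are controlled by $E_{\operatorname{Mod}}$. Consequently $|u_h|$ is uniformly Lipschitz, with constant $K$, on $B(x,r_0)$. Now suppose $||u_h(x)|-1|\ge\varepsilon$. Then $\bigl||u_h|-1\bigr|\ge\varepsilon/2$ on a ball of radius $\min(r_0,\varepsilon/2K)$. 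On that ball $(\Theta^2(|u_h|)-1)^2\ge c(\varepsilon)>0$, so
\[
E_{\operatorname{Mod}}(u_h)\ge c(\varepsilon)\,r^N.
\]
This contradicts \eqref{ann3.1} once $E^U_{\operatorname{Mod}}(w)$ is below that threshold. The delicate point is making the gradient bound independent of $u_h$ beyond its energy. I would handle it by bootstrapping from the bounded right-hand side and from the local $L^2$ control of $u_h-1$, which is obtained from $|u_h|-1\in L^2$ with the localization distance $4r_0$.
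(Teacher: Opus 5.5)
Your proposal is correct and follows essentially the paper's route: (i)--(iii) are the standard direct-method and Euler--Lagrange argument of Mari\c{s}'s Lemma~3.1, which the paper simply cites. Part (iv) is the same small-ball argument, with your uniform Lipschitz bound playing the role of the paper's $C^{0,1/2}$ estimate with constant $c(h,r_0)$, and you are in fact more careful than the paper about why that constant depends only on $N$, $h$, $r_0$ (and an energy bound). Two points to tighten: the local control in (iv) should come from Poincar\'e applied to $u_h$ minus its average on the ball rather than to $u_h-1$, since $|u_h|-1\in L^2$ does not control the phase; and the integration by parts in \eqref{ann3.4} amounts to the identity $P(w+\phi)=P(w)+\int_{\mathbb{R}^N}2\langle i w_{x_1},\phi\rangle+\langle i\phi_{x_1},\phi\rangle\,dx$ for $\phi\in H^1(\mathbb{R}^N)$.
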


{\it Proof. } For the proof of (i), (ii) and (iii), see Lemma 3.1 in \cite{Maris2013}.

(iv) Using (iii), we see that $u_h \in C_{\text{loc}}^{1, \alpha}(\mathbb{R}^N)$ for $0 \leq \alpha<1$, letting $\alpha=\frac{1}{2}$, we obtain
\begin{equation}
\label{ann3.28}
\left|u_h(x_1)-u_h(x_2)\right| \leq c(h, r_0)\left|x_1-x_2\right|^{\frac{1}{2}}
\end{equation}
for $x_1, x_2 \in B(x_0, r_0)$.

Suppose $||u_h(x_0)|-1| \geq \varepsilon>0$ for $x_0 \in U$ and $B(x_0, 4 r_0) \subset U$. Using $|||u_h(x_1)|-1|-||u_h(x_2)|-1|| \leq |u_h(x_1)-u_h(x_2)|$ and \eqref{ann3.28}, we have
$$
\left|\left|u_h(y)\right|-1\right| \geq \frac{\varepsilon}{2} \quad \forall y \in B(x_0, d_{\varepsilon})
$$
with
\begin{equation}
\label{ann3.29}
d_{\varepsilon}=\min \left\{r_0, ~\frac{\varepsilon^2}{4 c^{2}(h, r_0)}\right\}.
\end{equation}
Define 
\begin{equation}
\label{ann3.30}
\zeta(t)=\operatorname{inf}\left\{\left(\Theta^2(s)-1\right)^2 ~\big|~
s \in(-\infty, 1-t] \cup[1+t, \infty) \right\} .
\end{equation}
We do the estimate
\begin{equation}
\begin{aligned}
\label{ann3.31}
& E_{\operatorname{Mod}}^U(w) \geq E_{\operatorname{Mod}}^U\left(u_h\right) 
 \geq \frac{1}{2} \int_{B\left(x_0, d_{\varepsilon}\right)}\left(\Theta^2\left(\left|u_h\right|\right)-1\right)^2 d x \\
& \geq \frac{1}{2} \int_{B\left(x_0, d_{\varepsilon}\right)} \zeta\left(\frac{\varepsilon}{2}\right) d x
=\frac{1}{2} \zeta\left(\frac{\varepsilon}{2}\right) \omega_{N} d_{\varepsilon}^N,
\end{aligned}
\end{equation}
where $\omega_{N}$ is the volume of the unit ball $B(0,1)$ in $\mathbb{R}^N$.
We infer that \eqref{ann3.31} cannot be true if $E_{\operatorname{Mod}}^U(w) \leq C(h, r_0, \varepsilon)$. Therefore,
$ \left|\left|u_h(x_0)\right|-1\right|<\varepsilon$
provided $E_{\operatorname{Mod}}^U(w) \leq C(N, h, \varepsilon, r_0)$ and $B(x_0, 4 r_0) \subset U.$
\hfill
$\Box $

\begin{remark}
The proof of (iv) in \cite{Maris2013} uses the equation \eqref{ann3.5} to achieve 
an upper bound of $\|\nabla u_h\|_{L^p(B(x_0, r))}$ by $E_{\operatorname{Mod}}^U(w)$, 
and then uses the Morrey inequality to obtain a lower bound of $E_{\operatorname{Mod}}^U(w)$. 
The argument is complicated. While our proof makes use of the Hölder continuity of $u_h$ from (iii), 
which simplifies the argument.

\end{remark}

\begin{Lemma}
\label{annlem3.2}

Let $\{w_n\}_{n \geq 1} \subset \mathcal{E}$ satisfy two assumptions:

(i) $E_{\operatorname{Mod}}(w_n)$ is bounded;

(ii) $\sup_{x \in \mathbb{R}^N} E_{\operatorname{Mod}}^{B(x, 1)}(w_n) \rightarrow 0$ as $n \rightarrow \infty$.

Then we have
$$
\lim_{n \rightarrow \infty}\left\|\left|u_n\right|-1\right\|_{L^{\infty}(\mathbb{R}^N)}=0,
$$
where $u_n$ is any minimizer of $A_{h_n, \mathbb{R}^N}^{w_n}$ in $H_{w_n}^{1}(\mathbb{R}^N)$ and 
the sequence $\{h_n\}_{n \geq 1}$ converges to 0 as $n \to \infty$.

\end{Lemma}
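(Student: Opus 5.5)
The plan is a proof by contradiction that makes the Hölder argument of Lemma \ref{annlem3.1}(iv) uniform in $h_n$. Suppose the conclusion fails. Then, after passing to a subsequence, there are $\varepsilon>0$ and points $x_n\in\mathbb{R}^N$ with $\left||u_n(x_n)|-1\right|\ge\varepsilon$. By translation invariance of the whole setting we may take $x_n=0$. The goal is to show that $u_n$ must then carry a definite amount of modified potential energy near $0$, which contradicts hypothesis (ii).

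\textbf{Step 1: local smallness of $u_n$.} I first show that $\delta_n:=\sup_x \bigl[E^{B(x,1)}_{\operatorname{Mod}}(u_n)+h_n^{-2}\int_{B(x,1)}\Theta(|u_n-w_n|^2)\bigr]\to 0$. The only available estimates are global: by \eqref{ann3.1} and (i), $A^{w_n}_{h_n,\mathbb{R}^N}(u_n)$ is bounded. To localize I use competitors in $H^1_{w_n}(\mathbb{R}^N)$ of the form $\tilde u=\chi w_n+(1-\chi)u_n$, where $\chi$ is a cutoff equal to $1$ on $B(x,1)$ and supported in $B(x,1+r)$. Minimality of $u_n$ then bounds the energy of $u_n$ on $B(x,1)$ by:
\begin{itemize}
\item $E_{\operatorname{Mod}}$ of $w_n$ on $B(x,2)$, which is $o(1)$ uniformly in $x$ by (ii) and a covering of $B(x,2)$ by finitely many unit balls;
\item gluing terms on the annulus $B(x,1+r)\setminus B(x,1)$, which involve $|u_n-w_n|$ and $|\nabla u_n|$ there.
\end{itemize}
The gluing terms are handled with the usual averaging trick: since the global energy is bounded, I choose $r\in(0,1)$ so that the annulus carries only a small share of $A^{w_n}_{h_n,\mathbb{R}^N}(u_n)$. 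Feeding this back, or iterating it, should give $\delta_n\to0$.

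\textbf{Step 2: regularity at scale $h_n$.} This is the crux, because the constant $c(h,r_0)$ in \eqref{ann3.28} blows up as $h\to0$. I rescale: set $v_n(y)=u_n(h_ny)$ and $\tilde w_n(y)=w_n(h_ny)$. Then \eqref{ann3.5} becomes
\[
-\Delta v_n+h_n^2\bigl(\Theta^2(|v_n|)-1\bigr)\Theta(|v_n|)\Theta'(|v_n|)\frac{v_n}{|v_n|}+\Theta'\bigl(|v_n-\tilde w_n|^2\bigr)(v_n-\tilde w_n)=0 .
\]
Every term except $\Delta v_n$ is bounded uniformly, since $\Theta'$ vanishes for arguments $\ge4$ and $\Theta$ is bounded. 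Interior $W^{2,p}$ estimates on $B(0,2)$ with $p>N$, together with Morrey's inequality, should then give a uniform bound on $|\nabla v_n|$ on $B(0,1)$. The price is a lower-order term $\|v_n-c\|_{L^2(B(0,2))}$, which I control by Poincaré. Going back to the original variables, $|\nabla u_n|\le C/h_n$ on $B(0,h_n)$. Consequently $\left||u_n|-1\right|\ge\varepsilon/2$ on $B(0,c\varepsilon h_n)$.

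\textbf{Step 3: closing the contradiction.} Step 2 alone is not enough: the region from Step 2 yields only $E_{\operatorname{Mod}}^{B(0,1)}(u_n)\gtrsim \zeta(\varepsilon/2)(\varepsilon h_n)^N$, which tends to $0$ and does not contradict Step 1 by itself. So the last step will be to propagate $\left||u_n|-1\right|\ge\varepsilon/4$ from $B(0,c\varepsilon h_n)$ to a ball of radius independent of $n$. I would try to do this by comparing $u_n$ with $w_n$: Step 1 gives $\int_{B(0,1)}\Theta(|u_n-w_n|^2)\le h_n^2\delta_n$. On the other hand, $w_n$ has small $H^1$-oscillation on unit balls, so $|w_n|$ is close to $1$ on most of $B(0,1)$ in measure. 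If this comparison works, then $|u_n-w_n|\gtrsim\varepsilon$ on $B(0,c\varepsilon h_n)$ forces $\int_{B(0,1)}\Theta(|u_n-w_n|^2)\gtrsim\varepsilon^2(\varepsilon h_n)^N$, which would have to be weighed against the bound $h_n^2\delta_n$.

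I expect the balancing of powers of $h_n$ between Steps 2 and 3 to be the main obstacle. For $N\ge3$ the scale-$h_n$ information alone does not beat $h_n^2\delta_n$, since $(\varepsilon h_n)^N\le h_n^2\delta_n$ is not contradictory. To overcome this I would import the finer local bound underlying \eqref{ann3.2}, i.e.\ the term $h^{4/N}E^{(N+2)/N}$, applied on unit balls: it controls $\|u_n-w_n\|_{L^2(B)}$ by $o(1)\cdot h_n^{2/N}$. I would combine this with an $L^\infty$ version of the Step 2 gradient estimate, following the strategy of Lemma 3.2 in \cite{Maris2013}.
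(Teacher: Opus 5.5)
There is a genuine gap, and it is where you expected, in Step~3. It is not a matter of balancing exponents, though: for an arbitrary sequence $h_n\to0$ the conclusion is false, so no argument that takes $h_n$ as given can close.

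Here is a counterexample. Let $N\ge3$ and $0<h_n<1$. Pick $\phi\in C_c^\infty(B(0,2))$ with $0\le\phi\le1$ and $\phi=1$ on $B(0,1)$, and set $w_n(x)=1+\phi(x/\rho_n)$ with $\rho_n=h_n^{1/2}$. Then $E_{\operatorname{Mod}}(w_n)\le C\rho_n^{N-2}\to0$, so (i) and (ii) hold. Since $w_n\in H^1_{w_n}(\mathbb{R}^N)$, we get $A^{w_n}_{h_n,\mathbb{R}^N}(u_n)\le E_{\operatorname{Mod}}(w_n)\le C\rho_n^{N-2}$. Suppose $\||u_n|-1\|_{L^\infty}\le\frac12$. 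On $B(0,\rho_n)$ we have $w_n=2$, so $|u_n-w_n|\ge\frac12$ a.e. there, hence $\Theta(|u_n-w_n|^2)\ge\frac14$ and
\[
\frac{\omega_N}{4}\,h_n^{-2}\rho_n^{N}\le A^{w_n}_{h_n,\mathbb{R}^N}(u_n)\le C\rho_n^{N-2},
\qquad\text{i.e.}\qquad \omega_N\le 4C\,h_n ,
\]
which fails for large $n$. The minimizer simply reproduces a defect of $w_n$ whose size $\rho_n$ is much larger than $h_n$. That is exactly why the bad set cannot be propagated from radius $c\varepsilon h_n$ to a radius independent of $n$.

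Step~2 also has an unjustified point for $N\ge3$. Poincaré bounds $\|v_n-c\|_{L^2(B(0,2))}$ by $C\|\nabla v_n\|_{L^2(B(0,2))}$. But $\|\nabla v_n\|^2_{L^2(B(0,2))}=h_n^{2-N}\|\nabla u_n\|^2_{L^2(B(0,2h_n))}$, and the energy bound does not control this.

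The missing idea is Step~1 of Lemma~3.2 in \cite{Maris2013}, which you invoke only at the very end. The sequence $h_n$ must be chosen \emph{after} $w_n$, tending to $0$ slowly compared with $\eta_n:=\sup_x E^{B(x,1)}_{\operatorname{Mod}}(w_n)$.

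This choice is what makes the $h_n^{-2}$-term in \eqref{ann3.5} controllable on unit balls. Mariş's Step~2 then bounds $\|\Delta u_n\|_{L^N(B(x,1))}$ uniformly in $n$ and $x$. Since $|\Theta'(s^2)s|^N\le C\,\Theta(s^2)$, such a bound follows once $\int_{B(x,1)}\Theta(|u_n-w_n|^2)=O(h_n^{2N})$ uniformly in $x$. This is where a quantitative form of your Step~1, with a rate in $\eta_n$, would enter.

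Elliptic estimates and Morrey's inequality then give a Hölder modulus at unit scale that is independent of $n$. The closing argument \eqref{ann3.60}--\eqref{ann3.61} then goes through with no analysis at scale $h_n$. With such a choice of $h_n$, your own Step~3 bookkeeping could also be made contradictory, but the unit-scale route is simpler.

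Your remark that the constant in \eqref{ann3.28} degenerates as $h\to0$ is correct, and it identifies the weak point of the paper's own proof. There, the constant $C_1$ in \eqref{ann3.59} is treated as independent of $n$, although the preceding argument only gives $\|\Delta u_n\|_{L^\infty}\le 24+2h_n^{-2}$. What can be proved, and what \cite{Maris2013} proves, is the statement for a suitably chosen sequence $h_n\to0$. That is all Lemma~\ref{bbm2024lem1.2} needs; the statement is not true for every such sequence.
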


{\it Proof. }Using Lemma \ref{annlem3.1} (iii), we see that $u_n$ verifies \eqref{ann3.5}.
\begin{equation}
\label{ann3.12}
\begin{aligned}
\left|\left(\Theta^2(|u_n|)-1\right) \Theta(|u_n|) \Theta^{\prime}(|u_n|) \frac{u_n}{\left|u_n\right|}\right|  \leq 3\left|\Theta^2(|u_n|)-1\right| 
\leq 24 .
\end{aligned}
\end{equation}
Therefore, we deduce that
$$
\left(\Theta^2(|u_n|)-1\right) \Theta(|u_n|) \Theta^{\prime}(|u_n|) \frac{u_n}{\left|u_n\right|} \in L^{\infty}(\mathbb{R}^N) .
$$
Since $\Theta^2(\left|u_n\right|)-1 \in L^2(\mathbb{R}^N)$, we have $(\Theta^2(|u_n|)-1) \Theta(|u_n|) \Theta^{\prime}(|u_n|) \frac{u_n}{\left|u_n\right|} \in L^2 \cap L^{\infty}(\mathbb{R}^N)$.
Since
$$
\left|\Theta^{\prime}\left(\left|u_n-w\right|^2\right)\left(u_n-w\right)\right| \leq \left|u_n-w\right|
$$
 and $|u_n-w| \in L^2(\mathbb{R}^N)$, we see that
$$
\Theta^{\prime}\left(\left|u_n-w\right|^2\right)\left(u_n-w\right) \in L^2(\mathbb{R}^N).
$$
We also have
$$
\left|\Theta^{\prime}\left(\left|u_n-w\right|^2\right)\left(u_n-w\right)\right| \leq \sup _{x \geq 0} \Theta^{\prime}\left(x^2\right) x<\infty.
$$
Therefore, we deduce that
$$
\Theta^{\prime}\left(\left|u_n-w\right|^2\right)\left(u_n-w\right) \in L^2 \cap L^{\infty}(\mathbb{R}^N).
$$
From the equation \eqref{ann3.5}, we obtain
$$
\Delta u_n \in L^2 \cap L^{\infty}(\mathbb{R}^N).
$$
Then from (Theorem 9.11 in \cite{GT2001}, p. 235), we see that
$$
u_n \in W_{\text{loc}}^{2, p}\left(\mathbb{R}^N\right) \text { for } p \in(1, \infty).
$$
The Morrey inequality says,
\begin{equation}
\label{ann3.27}
\begin{aligned}
\left\|u_n\right\|_{C^{0, \gamma}\left(\mathbb{R}^N\right)} \leqslant C(p, N)\left\|u_n\right\|_{W^{1,p}\left(\mathbb{R}^N\right)},
\end{aligned}
\end{equation}
where $\gamma=1-\frac{N}{p}$.
Using the Morrey inequality \eqref{ann3.27} with $p=2 N$, we deduce that
$$
u_n \in C_{\text{loc}}^{0, \frac{1}{2}}\left(\mathbb{R}^N\right).
$$
That is, for any $x_1, x_2 \in \mathbb{R}^N$ with $\left|x_1-x_2\right|<1$, we have
\begin{equation}
\label{ann3.59}
\begin{aligned}
\left|u_n(x_1)-u_n(x_2)\right| \leq C_1\left|x_1-x_2\right|^{\frac{1}{2}}.
\end{aligned}
\end{equation}

Let $\varepsilon_n=\| | u_n | -1 \|_{L^{\infty}(\mathbb{R}^N)}$. Let $x_n$ be a point such that $\left|\left|u_n(x_n)\right|-1\right| \geq \frac{\varepsilon_n}{2}$. Using \eqref{ann3.59}, we infer that
$$
\left|\left|u_n(y)\right|-1\right| \geq \frac{\varepsilon_n}{4}, \quad \forall y \in B\left(x_n, r_n\right),
$$
where
$$
r_n=\min \left\{1, \frac{\varepsilon_n^2}{\left(4 C_1\right)^2}\right\}.
$$
Therefore,
\begin{equation}
\label{ann3.60}
\begin{aligned}
 \int_{B\left(x_n, 1\right)}\left(\Theta^2\left(\left|u_n(s)\right|\right)-1\right)^2 d s
\geq & \int_{B\left(x_n, r_n\right)}\left(\Theta^2\left(\left|u_n(s)\right|-1\right)^2 d s\right. \\
\geq & \int_{B\left(x_n, r_n\right)} \zeta\left(\frac{\varepsilon_n}{4}\right) d y \\
= & \zeta\left(\frac{\varepsilon_n}{4}\right) \omega_{N} r_n^N,
\end{aligned}
\end{equation}
where $\zeta$ is given in \eqref{ann3.30}.

We also have
\begin{equation}
\label{ann3.61_0}
\begin{aligned}
& \int_{B(x, 1)}\left|\left(\Theta^2\left(\left|u_n\right|\right)-1\right)^2-\left(\Theta^2\left(\left|w_n\right|\right)-1\right)^2\right| d s \\
\leq & C_2 \int_{B(x, 1)}\left|u_n-w_n\right| d s  \\
\leq & C_3\left\|u_n-w_n\right\|_{L^2\left(\mathbb{R}^N\right)} \leqslant C_4 h_n^{\frac{2}{N}},
\end{aligned}
\end{equation}
where we have used the fact that $(\Theta^2(|z|)-1)^2$ is Lipschitz, \eqref{ann3.2} and assumption (i).
$\{h_n\}_{n \geq 1}$ can be chosen as any sequence that satisfies $ \lim_{n \rightarrow \infty} h_n=0.$

From assumption (ii) and \eqref{ann3.61_0}, we have
\begin{equation}
\label{ann3.61}
\begin{aligned}
\lim _{n \rightarrow \infty} \sup _{x \in \mathbb{R}^N} \int_{B(x, 1)}\left(\Theta^2\left(\left|u_n\right|\right)-1\right)^2 d s=0.
\end{aligned}
\end{equation}
Combining \eqref{ann3.60} and \eqref{ann3.61}, we deduce that $\zeta\left(\frac{\varepsilon_n}{4}\right) r_n^N \rightarrow 0$ as $n \rightarrow \infty$. Thus, $\varepsilon_n \rightarrow 0$ as $n \rightarrow \infty$.
\hfill
$\Box $

\begin{remark}
The conclusion of the above lemma is presented in (\cite{Maris2013}, Lemma 3.2), where there are 4 steps in the proof. Step 1 is to choose an appropriate sequence $\{h_i\}_{i \geq 1}$.
Step 2 is to show that $\|\Delta u_n\|_{L^N(B(x, 1))}$ is bounded above by a constant. 
Steps 3 and 4 show that $u_n$ is Hölder continuous and draw the conclusion by contradiction argument. 
Here we make use of the elliptic regularity of equation \eqref{ann3.5}, from which we get the Hölder continuity.

\end{remark}

\begin{remark} 
\label{remannlem3.2}
If we inspect the proof of Lemma \ref{annlem3.2}, we see that assumption (ii) can be replace by
$$
\limsup_{n \rightarrow \infty ~ x \in \mathbb{R}^N} \int_{B(x, 1)}\left(\Theta^2\left(\left|w_n(s)\right|\right)-1\right)^2 d s=0.
$$

\end{remark}

We use the notation $U_{R_1, R_2}=B(0, R_2) \cap \bar{B}(0, R_1)^{c}$,
where $\bar{B}(0, R_1)^{c}$ means the complement of $\bar{B}(0, R_1)$ in $\mathbb{R}^N$.

\begin{Lemma}
\label{annlem3.3}
 Let $\nu \in (0, \sqrt{2})$. Let $1<c_2<c_3<c_4$.
There exist $\varepsilon_1=\varepsilon_1(N, c_2, c_3, c_4)>0$ and $c_i=c_i(N, c_2, c_3, c_4)>0$ (where $i=5,6,7,8$), 
such that for $w \in \mathcal{E}$ satisfying $E_{\operatorname{Mod}}^{U_{R, c_4 R}}(w) \leq \varepsilon_2$ with $R \geq 1,~0<\varepsilon_2<\varepsilon_1$, there exist $w_1, ~w_2 \in \mathcal{E}$ and $0 \leq \varphi_0<2 \pi$ verifying:

(i) $$
\begin{aligned}
& w_1= \begin{cases}w & x \in B(0, R), \\
e^{i \varphi_0} & x \in B\left(0, c_2 R\right)^c;\end{cases} \\
& w_2= \begin{cases}e^{i \varphi_0} & x \in B\left(0, c_3 R\right), \\
w & x \in B\left(0, c_4 R\right)^c.\end{cases}
\end{aligned}
$$

(ii) $\int_{\mathbb{R}^N} \sum_{i=2}^N \left| \left| \frac{\partial w}{\partial x_i}\right|^2-\left|\frac{\partial w_1}{\partial x_i}\right|^2-\left|\frac{\partial w_2}{\partial x_i}\right|^2 \right\rvert\, d x \leq c_5 \varepsilon_2$.

(iii) If assumptions (Ass1) and (Ass2) hold, then
$$
\begin{aligned}
&  \int_{\mathbb{R}^N}\left| \left| \frac{\partial w}{\partial x_1}\right|^2-\left|\frac{\partial w_1}{\partial x_1}\right|^2-\left|\frac{\partial w_2}{\partial x_1}\right|^2 \right\rvert\, d x 
+  \int_{\mathbb{R}^N}\left|V\left(|w|^2\right)-V\left(\left|w_1\right|^2\right)-V\left(\left|w_2\right|^2\right)\right| d x \\
+ & \nu \int_{\mathbb{R}^N}\left|P(w)-P\left(w_1\right)-P\left(w_2\right)\right|dx \leq 
 c_6 \varepsilon_2+c_7 \sqrt{\varepsilon_2}\left(E_{\operatorname{Mod}}(w)\right)^{\frac{N+2}{2(N-2)}} .
\end{aligned}
$$

(iv) $$
\begin{aligned} 
&\int_{\mathbb{R}^N} \left| | \nabla w|^2-\left|\nabla w_1\right|^2-\left|\nabla w_2\right|^2 \right| d x \\
&+\int_{\mathbb{R}^N} \mid\left(\Theta^2(|w|)-1\right)^2-\left(\Theta^2\left(\left|w_1\right|\right)-1\right)^2 
-\left(\Theta^2\left(\left|w_2\right|\right)-1\right)^2 \mid d x \leq c_8 \varepsilon .
\end{aligned}
$$

\end{Lemma}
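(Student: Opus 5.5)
The plan is to follow the splitting construction of Lemma 3.3 in \cite{Maris2013}, adapted to the anisotropic functional $T$. The first step is to regularize $w$ on the annulus $U=U_{R,c_4R}$. Fix $h=1$ and take a minimizer $u=u_1$ of $A^{w}_{1,U}$ in $H^1_w(U)$ given by Lemma \ref{annlem3.1}(i). Then $u=w$ outside $U$, and $E_{\operatorname{Mod}}^U(u)\le E_{\operatorname{Mod}}^U(w)\le\varepsilon_2$. If $\varepsilon_1$ is chosen small, Lemma \ref{annlem3.1}(iv) gives $||u|-1|<1/2$ on the smaller annulus $\{R':=\tfrac{1+c_2}{2}R\cdot(\text{suitable})<|x|<c_3'R\}$. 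To guarantee that the distance to $\partial U$ exceeds $4r_0$ uniformly, we use $R\ge1$ and choose $r_0$ proportional to the gaps between consecutive $c_i$; we first rescale to $R=1$ if necessary, noting that $E_{\operatorname{Mod}}$ behaves well under $R\ge1$ dilations in $N\ge2$. On this region we write $u=\rho e^{i\phi}$ with a continuous lift $\phi$, which exists because the annulus is simply connected for $N\ge3$. By averaging over radii and using the bound $\int_{U}|\nabla u|^2+(\rho-1)^2\lesssim\varepsilon_2$, we pick a phase constant $\varphi_0$ (the mean of $\phi$ on a middle shell) such that the Poincaré inequality controls $\int_{\text{shell}}|\phi-\varphi_0|^2\lesssim R^2\varepsilon_2$.

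Next, we construct $w_1,w_2$ with cut-offs. Let $\chi_1$ be a radial cut-off that equals $1$ on $B(0,R_a)$ and $0$ outside $B(0,c_2R)$, with $|\nabla\chi_1|\lesssim 1/R$, where $R<R_a<c_2R$ lies inside the good region. On $B(0,R)$ we set $w_1=w$, and on $R\le|x|\le R_a$ we set $w_1=u$ interpolated appropriately. To keep $w_1=w$ on $B(0,R)$, we use instead the minimizer on the annulus and the fact that $u=w$ on $\partial B(0,R)$. On the transition shell we set
$$w_1=\bigl(1+\chi_1(\rho-1)\bigr)e^{i(\varphi_0+\chi_1(\phi-\varphi_0))},$$
and $w_1=e^{i\varphi_0}$ outside $B(0,c_2R)$. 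The function $w_2$ is built symmetrically with a cut-off $\chi_2$ that vanishes on $B(0,c_3R)$ and equals $1$ near $c_4R$.

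The estimates then come from gradients on the transition shells. There $|\nabla w_1|^2\lesssim|\nabla u|^2+|\rho-1|^2/R^2+|\phi-\varphi_0|^2/R^2$. Hence the integral is at most $C\varepsilon_2$ by the Poincaré bound, and the same holds for each directional derivative separately. Since $w=w_1+w_2-e^{i\varphi_0}$ away from the annulus, and both the differences $w$ versus $u$ and the new functions are controlled by $E_{\operatorname{Mod}}^U$, this gives (ii) and (iv). In (iv), the $\varepsilon$ should be read as $\varepsilon_2$. The potential terms in (iv) are bounded using $|\rho-1|<1/2$ together with Lemma \ref{annlem3.1}, specifically \eqref{ann3.3}.

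For (iii), the $\partial_{x_1}$ and $V$ terms are handled in the same way, using (Ass1) to get $V(\rho^2)\lesssim(\rho-1)^2$ near $\rho=1$. The momentum requires more care. Where $|w_k|$ is close to $1$, the density $\langle i\partial_1w_k,w_k\rangle$ equals $-\rho^2\partial_1\phi$ up to a divergence term, and its integral over the shells is bounded by $\|\rho^2-1\|_{L^2}\|\nabla\phi\|_{L^2}$ plus a $\mathcal Z$-part, which is $O(\varepsilon_2)$. The term $c_7\sqrt{\varepsilon_2}\,E_{\operatorname{Mod}}(w)^{\frac{N+2}{2(N-2)}}$ arises from replacing $w$ by $u$ on $U$. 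Using \eqref{ann3.4}-type bounds together with Sobolev embedding, we get $\|w\|_{L^{2^*}}$-type control of $|w|-1$ by powers of $E_{\operatorname{Mod}}(w)$, and the nonlinearity growth (Ass2) produces the exponent $\frac{N+2}{2(N-2)}$ when $V(|w|^2)-V(|u|^2)$ is estimated via Hölder.

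I expect the main obstacle to be this bookkeeping for $P$ and $V$ in (iii), since $P$ is defined only through the $L^1+\mathcal Z$ decomposition, and the additivity $P(w)=P(w_1)+P(w_2)+\text{error}$ must be verified at that level. The plan is to write $w=w_1+w_2-e^{i\varphi_0}$ outside $U$, apply Lemma 2.1 of \cite{CM2017} to each piece, and handle the part inside $U$ by the local lift computation above.
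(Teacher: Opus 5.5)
Your proposal is correct in outline and follows the same route as the paper, whose proof simply invokes the splitting construction of Lemma 3.3 in \cite{Maris2013}: regularize on $U_{R,c_4R}$, lift the phase of $u=\rho e^{i\phi}$ on the annulus where $|u|\approx 1$, fix $\varphi_0$ by a Poincar\'e inequality there, cut off modulus and phase, and control $P$ through \eqref{ann3.4} together with the identity $\langle i\partial_1 u,u\rangle=-\rho^2\partial_1\phi$.

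Two corrections. First, drop the rescaling to $R=1$. It is unnecessary once $r_0$ is tied to the gaps $c_{k+1}-c_k$, and it would cost a factor $R^2$ on the potential part, which scales like $R^N$ rather than $R^{N-2}$. Second, for the momentum, compare $u$ with $w_1+w_2-e^{i\varphi_0}$ on the whole good annulus at once, rather than estimating each shell separately. Take $\chi=\chi_1$ on the inner shell, $\chi=0$ between $c_2R$ and $c_3R$, and $\chi=\chi_2$ on the outer shell. The divergence term $\partial_1\bigl((1-\chi)(\phi-\varphi_0)\bigr)$ is then compactly supported and integrates to zero.
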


{\it Proof. } The proof of this lemma is an application of Lemma 3.3 in \cite{Maris2013}.
\hfill
$\Box $

\bigskip

The following is Lemma 4.2 in \cite{Maris2013}.

\begin{Lemma}
\label{annlem4.2}
Let $0<\varepsilon<1, ~w \in \mathcal{E}$ and $||w|-1| \leq \varepsilon$. Then
\begin{equation*}
\begin{aligned}
|P(w)| & \leq \frac{1}{\sqrt{2}(1-\varepsilon)}\left(\int_{\mathbb{R}^N}\left|\frac{\partial w}{\partial x_1}\right|^2+\frac{1}{2}\left(|w|^2-1\right)^2 d x\right) \\ 
& = \frac{1}{\sqrt{2}(1-\varepsilon)}\left(\int_{\mathbb{R}^N}\left|\frac{\partial w}{\partial x_1}\right|^2+\frac{1}{2}\left( \Theta^2(|w|) -1\right)^2 d x\right) \\ 
& \leq \frac{1}{\sqrt{2}(1-\varepsilon)} E_{\operatorname{Mod}}(w) .
\end{aligned}
\end{equation*}

\end{Lemma}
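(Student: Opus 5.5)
Since $||w|-1|\le\varepsilon<1$, we have $|w|\ge 1-\varepsilon>0$ everywhere, so $w$ admits a lifting $w=\rho e^{i\varphi}$ with $\rho=|w|\in[1-\varepsilon,1+\varepsilon]$ and $\varphi\in H^1_{\text{loc}}$, $\nabla\varphi\in L^2$. The plan is to write the momentum density in these coordinates and then bound it pointwise by Cauchy--Schwarz. A direct computation gives $\langle i w_{x_1},w\rangle=-\rho^2\varphi_{x_1}$. Since $\varphi_{x_1}\in\mathcal{Z}$, this can be rewritten as $\langle i w_{x_1},w\rangle=(1-\rho^2)\varphi_{x_1}-\varphi_{x_1}$. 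Here $(1-\rho^2)\varphi_{x_1}\in L^1$, because both factors lie in $L^2$: indeed $1-\rho^2\in L^2$ follows from $|w|-1\in L^2$ and the boundedness of $\rho$. By the definition of $L$ we therefore get
\[
P(w)=\int_{\mathbb{R}^N}(1-\rho^2)\varphi_{x_1}\,dx .
\]
This identification is essentially the content of Section 2 of \cite{Maris2013} and Lemma 2.1 of \cite{CM2017}. The only point to check is that the decomposition in $L^1+\mathcal{Z}$ is unique modulo the functional $L$, which I will take from those references.

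Next I will bound the integrand pointwise. We have $|w_{x_1}|^2=\rho_{x_1}^2+\rho^2\varphi_{x_1}^2\ge \rho^2\varphi_{x_1}^2\ge (1-\varepsilon)^2\varphi_{x_1}^2$. Then
\[
|(1-\rho^2)\varphi_{x_1}|\le \frac{1}{1-\varepsilon}\,|\rho^2-1|\,\rho|\varphi_{x_1}|
\le \frac{1}{\sqrt2(1-\varepsilon)}\Big(\rho^2\varphi_{x_1}^2+\tfrac12(\rho^2-1)^2\Big),
\]
using $2ab\le \sqrt2 a^2+b^2/\sqrt2$ with $a=\rho|\varphi_{x_1}|$ and $b=|\rho^2-1|$. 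Integrating over $\mathbb{R}^N$ yields the first inequality.

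For the equality, I will use that $\Theta(s)=s$ on $[0,2]$ and that $|w|\le 1+\varepsilon<2$, so $\Theta^2(|w|)=|w|^2$. The last inequality is immediate, since $\int|\partial_{x_1}w|^2\le\int|\nabla w|^2$ and the potential term is then exactly the one appearing in $E_{\operatorname{Mod}}$.

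The main obstacle is the rigorous justification of the lifting and of the formula $P(w)=\int(1-\rho^2)\varphi_{x_1}$ for a general $w\in\mathcal{E}$. I would cite the cited works for this. Alternatively, I would argue by density: approximate by smooth $w$ that are constant outside a compact set, prove the formula for these, and pass to the limit using the continuity of $P$ in the topology of $\mathcal{E}$ on sets where $|w|$ stays away from $0$.
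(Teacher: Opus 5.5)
Your proposal is correct and is essentially the proof of Lemma 4.2 in \cite{Maris2013}, which the paper cites instead of reproving. That proof also lifts $w=\rho e^{i\varphi}$, identifies $P(w)=\int(1-\rho^2)\partial_{x_1}\varphi\,dx$ through the $L^1+\mathcal{Z}$ decomposition, and bounds the integrand pointwise using $\rho\ge 1-\varepsilon$ and $ab\le\frac{1}{\sqrt2}\bigl(a^2+\frac12 b^2\bigr)$. Your handling of the equality ($\Theta^2(|w|)=|w|^2$ because $|w|\le 1+\varepsilon<2$) and of the final comparison with $E_{\operatorname{Mod}}$ is also correct, and the density fallback is unnecessary, since the well-definedness of $L$ is already part of how the paper defines $P$.
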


\medskip

The following result is important to exclude the vanishing of the minimizing sequence.

\begin{Lemma}
\label{bbm2024lem1.2}
Suppose $\{w_n\} \subset \mathcal{E}$ is a sequence satisfying:

(i) $E_{\operatorname{Mod}}(w_n)$ is bounded,

(ii) 
\begin{equation}
\label{ann5.13}
\begin{aligned}
\sup_{x \in \mathbb{R}^N} \int_{B(x, 1)}\left(\Theta^2(| w_n(s)|)-1\right)^2 d s \rightarrow 0
\end{aligned}
\end{equation}
as $n \rightarrow \infty$.

Then
$$
\begin{aligned}
\liminf_{n \rightarrow \infty}S(w_n)=
\liminf_{n \rightarrow \infty}( T(w_n) - J(w_n)) & =\liminf_{n \rightarrow \infty} \left[ E_{\operatorname{Mod}}(w_n)+\nu P(w_n) \right]
 \geq 0
\end{aligned}
$$
for any $0 \leq \nu<\sqrt{2}$.

\end{Lemma}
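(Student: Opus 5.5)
The plan is to replace $w_n$ by the regularized functions $u_n$ from Section \ref{annsec34} and to use the fact that, in the vanishing regime, these are uniformly close to modulus one. Then the pointwise estimate of Lemma \ref{annlem4.2} applies to $u_n$.

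\textbf{Step 0 (the identities).} The identity $S(w_n)=T(w_n)-J(w_n)$ holds by the definitions \eqref{ann1.12}--\eqref{ann1.13}. I also need
\[
E(w_n)=E_{\operatorname{Mod}}(w_n)+o(1).
\]
For this I would compare $V(|w_n|^2)$ with $\frac12(\Theta^2(|w_n|)-1)^2$. Since $G(1)=0$ and $G'(1)=-1$, one has $V(s)=\frac12(s-1)^2+o((s-1)^2)$ near $s=1$. So the difference $V(|w_n|^2)-\frac12(\Theta^2(|w_n|)-1)^2$ is small relative to $(\Theta^2(|w_n|)-1)^2$ where $|w_n|$ is close to $1$. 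Where $|w_n|$ is far from $1$, (Ass2) and Sobolev embedding control the difference. The measure of that region, and the integral over it, tend to zero: cover $\mathbb{R}^N$ by unit balls with bounded overlap and combine (ii) with the local Sobolev inequality $\|\,|w_n|-1\|_{L^{2^*}(B)}\lesssim E_{\operatorname{Mod}}^{B}(w_n)^{1/2}$. The sum over balls is then bounded by $\sup_x (\text{local energy})^{\theta}\cdot E_{\operatorname{Mod}}(w_n)$ with some $\theta>0$, which is $o(1)$. This is the standard argument of Mariş (Lemma 4.1-type), and it gives the middle equality in the statement, up to an $o(1)$ that does not affect the $\liminf$.

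\textbf{Step 1 (regularize).} Choose $h_n\to0$ and let $u_n$ minimize $A^{w_n}_{h_n,\mathbb{R}^N}$, as in Lemma \ref{annlem3.1}(i). By Remark \ref{remannlem3.2}, hypothesis (ii) suffices for Lemma \ref{annlem3.2}, so $\varepsilon_n:=\|\,|u_n|-1\|_{L^\infty}\to0$. Estimate \eqref{ann3.3} gives
\[
\int|V_{\operatorname{Mod}}(u_n)-V_{\operatorname{Mod}}(w_n)|\le c_2h_nE_{\operatorname{Mod}}(w_n)\to0.
\]
Estimate \eqref{ann3.4} gives $|P(u_n)-P(w_n)|\to0$, using the boundedness in (i). For the gradients, \eqref{ann3.1} gives $E_{\operatorname{Mod}}(u_n)\le E_{\operatorname{Mod}}(w_n)$. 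Combined with the potential estimate, this yields
\[
E_{\operatorname{Mod}}(w_n)+\nu P(w_n)\ \ge\ E_{\operatorname{Mod}}(u_n)+\nu P(u_n)-o(1).
\]

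\textbf{Step 2 (coercivity).} Lemma \ref{annlem4.2} with $\varepsilon=\varepsilon_n$ gives
\[
\nu|P(u_n)|\le \frac{\nu}{\sqrt2(1-\varepsilon_n)}E_{\operatorname{Mod}}(u_n)\le E_{\operatorname{Mod}}(u_n)
\]
for $n$ large, because $\nu<\sqrt2$. Hence $E_{\operatorname{Mod}}(u_n)+\nu P(u_n)\ge0$. Taking $\liminf$ through Steps 0 and 1 yields the claim. The case $\nu=0$ is trivial.

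\textbf{Main obstacle.} I expect the hardest part to be Step 0, namely justifying $\int V(|w_n|^2)-\frac12(\Theta^2(|w_n|)-1)^2\to0$ when $|w_n|$ may be large on small sets. This is where (Ass2), the subcritical growth $p_0<\frac{2}{N-2}$, is needed. I would split each unit ball into $\{|w_n|\le 2\}$ and $\{|w_n|>2\}$. On the second region I would bound $|V(|w_n|^2)|\lesssim |w_n|^{2p_0+2}$ and use the local Sobolev inequality with the small local energy from (ii). Summing over the balls gives a factor $\sup_x E^{B(x,1)}_{\operatorname{Mod}}(w_n)^{\theta}$ with $\theta>0$.
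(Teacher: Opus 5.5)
Your proposal follows the paper's proof. You regularize by minimizers $u_n$ of $A^{w_n}_{h_n,\mathbb{R}^N}$ and get $\||u_n|-1\|_{L^\infty}\to0$ from Lemma~\ref{annlem3.2} and Remark~\ref{remannlem3.2}. You then apply Lemma~\ref{annlem4.2} to $u_n$ and return to $w_n$ via \eqref{ann3.1} and \eqref{ann3.4}; \eqref{ann3.3} is not needed. Finally you show $\int_{\mathbb{R}^N}\bigl(V(|w_n|^2)-\frac12(\Theta^2(|w_n|)-1)^2\bigr)\,dx\to0$ by splitting at $||w_n|-1|=\delta$. The only real difference is the far region. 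The paper uses the global Sobolev inequality and H\"older against $\mathrm{Vol}(\{||w_n|-1|>\delta\})$, and proves that this volume tends to $0$ with the Lions/Brezis--Lieb lemma. You instead localize on balls with bounded overlap, which amounts to reproving that lemma inline.

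Two points in your sketch need correcting.

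First, hypothesis (ii) controls only $\int_{B(x,1)}(\Theta^2(|w_n|)-1)^2$, not the local gradient. So $\sup_x E^{B(x,1)}_{\operatorname{Mod}}(w_n)$ need not tend to $0$ and cannot serve as your small factor. The smallness must come from the measure of the bad set, $|B\cap\{||w_n|-1|>\delta\}|\le\zeta(\delta)^{-1}\int_B(\Theta^2(|w_n|)-1)^2$ with $\zeta$ as in \eqref{ann3.30}, entering through H\"older:
\[
\int_{B\cap\{||w_n|-1|>\delta\}}||w_n|-1|^{2p_0+2}\,dx\le\||w_n|-1\|_{L^{2^*}(B)}^{2p_0+2}\,\bigl|B\cap\{||w_n|-1|>\delta\}\bigr|^{1-\frac{(p_0+1)(N-2)}{N}} .
\]
After this, $\sum_B(E^B_{\operatorname{Mod}}(w_n))^{p_0+1}\le(\sup_B E^B_{\operatorname{Mod}}(w_n))^{p_0}\sum_B E^B_{\operatorname{Mod}}(w_n)$ only needs to be bounded.

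Relatedly, your local bound $\||w_n|-1\|_{L^{2^*}(B)}\lesssim E^B_{\operatorname{Mod}}(w_n)^{1/2}$ is false in general (take $|w|\equiv K$ large on $B$). It holds for large $n$ only because (ii) forces $|w_n|\le 2$ on most of each ball. Then a Poincar\'e--Sobolev inequality applies to $(|w_n|-2)_+$, while $\min(||w_n|-1|,1)\le|\Theta^2(|w_n|)-1|$ handles the rest.

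Second, the split in your last paragraph, $\{|w_n|\le2\}$ versus $\{|w_n|>2\}$, is not the right one. On $\{|w_n|\le2,\ ||w_n|-1|>\delta\}$ the Taylor expansion gives no smallness. Keep the split of your Step~0. Then use the (Ass2) bound $c(\varepsilon)\,||w_n|-1|^{2p_0+2}$ on the whole set $\{||w_n|-1|>\delta\}$, as in \eqref{ann5.17}, so the H\"older estimate above covers that middle region too.
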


{\it Proof. }
Let $0<h_n<1$ be provided by Lemma \ref{annlem3.2}. 
Let $u_n$ be the minimizer of $A_{h_n, \mathbb{R}^N}^{w_n}$ in $H_{w_n}^{1}(\mathbb{R}^N)$. Using Lemma \ref{annlem3.2}, Remark \ref{remannlem3.2}, 
assumptions (i) and (ii), and Lemma \ref{annlem4.2}, we deduce that
$$
E_{\operatorname{Mod}}\left(u_n\right)+\nu P\left(u_n\right) \geq 0
$$
for sufficiently large $n$.

Using \eqref{ann3.4},
\begin{equation}
\label{ann5.27}
    \left|P\left(w_n\right)-P\left(u_n\right)\right| \leq c_1 h_n^{\frac{2}{N}} E_{\text{Mod}}\left(w_n\right).
\end{equation}
Therefore, combining with \eqref{ann3.1},
\begin{equation}
\label{bbm1}
\begin{aligned}
T\left(w_n\right)-J\left(w_n\right)  =& E_{\operatorname{Mod}}\left(w_n\right)+\nu P\left(w_n\right) 
 +\int_{\mathbb{R}^N}\left(V\left(\left|w_n\right|^2\right)-\frac{1}{2}\left(\Theta^2\left(\left|w_n\right|\right)-1\right)^2\right) d x \\
\geq  & E_{\operatorname{Mod}}\left(u_n\right)+\nu P\left(u_n\right)+\nu\left(P\left(w_n\right)-P\left(u_n\right)\right) \\
& +\int_{\mathbb{R}^N}\left(V\left(\left|w_n\right|^2\right)-\frac{1}{2}\left(\Theta^2\left(\left|w_n\right|\right)-1\right)^2\right) d x .
\end{aligned}
\end{equation}

If (Ass1) is satisfied, using Taylor expansion, we have
\begin{equation}
\label{ann1.6}
\begin{aligned}
V(y)=\frac{1}{2}(y-1)^2+(y-1)^2 \varepsilon(y-1),
\end{aligned}
\end{equation}
where $\varepsilon(y-1) \rightarrow 0$ as $y \rightarrow 1$.
For any $\varepsilon>0$, \eqref{ann1.6} implies that there exists $0<\delta(\varepsilon) \leq 1$ such that 
\begin{equation}
\label{ann4.2}
\begin{aligned}
\left|V\left(|w|^2\right)-\frac{1}{2}\left(|w|^2-1\right)^2\right| \leq \varepsilon\left(|w|^2-1\right)^2
\end{aligned}
\end{equation}
provided $||w|-1| \leq \delta=\delta(\varepsilon)$. Using \eqref{ann4.2},
\begin{equation}
\label{ann5.16}
\begin{aligned}
 \int_{\left\{\left|\left|w_n\right|-1\right| \leq \delta\right\}}\left|V\left(\left|w_n\right|^2\right)-\frac{1}{2}\left(\Theta^2\left(\left|w_n\right|\right)-1\right)^2\right| d x 
 \leq \varepsilon \int_{\left\{\left|\left|w_n\right|-1\right| \leq \delta\right\}}\left(\Theta^2\left(\left|w_n\right|\right)-1\right)^2 d x \leq \varepsilon M,
\end{aligned}
\end{equation}
where $M:=\sup_n E_{\operatorname{Mod}}(w_n)$.
The assumption (Ass2) implies that
\begin{equation}
\label{ann5.17}
\begin{aligned}
\left|V\left(|w|^2\right)-\frac{1}{2}\left(\Theta^2(|w|)-1\right)^2\right| \leq c_2(\varepsilon) \left| | w|-1 \right|^{2 p_0+2}
\end{aligned}
\end{equation}
for $||w|-1|>\delta$.

Using assumption (i), we see that
\begin{equation}
\label{ann5.18}
\begin{aligned}
\int_{\mathbb{R}^N}\left|\nabla w_n\right|^2 d x \leq c_4.
\end{aligned}
\end{equation}

Using \eqref{ann5.17}, Sobolev inequality and \eqref{ann5.18}, we have
\begin{equation}
\label{ann5.19}
\begin{aligned}
& \int_{\left\{\left|\left|w_n\right|-1\right|>\delta\right\}}\left|V\left(\left|w_n\right|^2\right)-\frac{1}{2}\left(\Theta^2\left(\left|w_n\right|\right)-1\right)^2\right| d x 
 \leq c_2(\varepsilon)
\int_{\left\{\left|\left|w_n\right|-1\right|>\delta\right\} }\left|\left|w_n\right|-1\right|^{2 p_0+2} d x \\
& \leq c_2(\varepsilon)\left(\int_{\left\{\left|\left|w_n\right|-1\right|>\delta\right\}} \left| |w_n|-1\right|^{\frac{2 N}{N-2}} d x\right)^{\frac{\left(p_0+1\right)\left(N-2\right)}{N}}  
 \operatorname{Vol}\left(\left\{| | w_n|-1|>\delta\right\}\right)^{1-\frac{\left(p_0+1\right)(N-2)}{N}} \\
& \leq
 c_3(\varepsilon)\left(\int_{\mathbb{R}^N}\left|\nabla w_n\right|^2 d x\right)^{p_0+1}
 \quad \operatorname{Vol}\left(\left\{\left|\left|w_n\right|-1\right|>\delta\right\}\right)^{1-\frac{\left(p_0+1\right)(N-2)}{N}} \\
& \leq
 c_3(\varepsilon) c_4^{p_0+1}
  \operatorname{Vol}\left(\left\{\left|\left|w_n\right|-1\right|>\delta\right\}\right)^{1-\frac{\left(p_0+1\right)(N-2)}{N}}.
\end{aligned}
\end{equation}

We will show that
\begin{equation}
\label{ann5.20}
\begin{aligned}
\lim_{n \rightarrow \infty} \operatorname{Vol}\left(\left\{| | w_n|-1|>\delta\right\}\right)=0.
\end{aligned}
\end{equation}
Suppose that up to a subsequence, there exist $\delta_1>0$ and $\delta_2>0$ such that
$$
\operatorname{Vol}\left(\left\{\left|\left|w_n\right|-1\right|>\delta_1\right\}\right) \geq \delta_2>0
$$
for any $n$. Since the boundedness of \eqref{ann5.18}, Lemma 6 in \cite{L1983} or Lemma 2.2 in \cite{BL1984} implies that there exists $\delta_3>0$ and $x_n \in \mathbb{R}^N$ such that
$$
\operatorname{Vol}\left(\left\{\left|\left|w_n\right|-1\right|>\frac{\delta_1}{2}\right\} \cap B\left(x_n, 1\right)\right) \geq \delta_3 .
$$
Let $\zeta$ be as in \eqref{ann3.30}. Then $\left|\left|w_n\right|-1\right|>\frac{\delta_1}{2}$ implies that
$$
\left(\Theta^2\left(\left|w_n\right|\right)-1\right)^2 \geq \zeta \left(\frac{\delta_1}{2}\right)>0 .
$$
Thus,
$$
\int_{B\left(x_n, 1\right)}\left(\Theta^2\left(\left|w_n(s)\right|\right)-1\right)^2 d s \geq \zeta\left(\frac{\delta_1}{2}\right) \delta_3>0,
$$
which is in contradiction with \eqref{ann5.13}. Therefore, \eqref{ann5.20} holds.

Using \eqref{ann5.16}, \eqref{ann5.19} and \eqref{ann5.20} we deduce
\begin{equation}
\label{ann5.14}
\begin{aligned}
\int_{\mathbb{R}^N}\left|V\left(\left|w_n\right|^2\right)-\frac{1}{2}\left(\Theta^2\left(\left|w_n\right|\right)-1\right)^2\right| d x \rightarrow 0
\end{aligned}
\end{equation}
as $n \rightarrow \infty$.

From \eqref{ann5.27}, \eqref{bbm1}, \eqref{ann5.14}, we get the conclusion.
\hfill
$\Box $

\begin{remark} 
\label{rembbm2024lem1.2}
If condition (ii) in Lemma \ref{bbm2024lem1.2} is replaced by
\begin{equation}
\label{rembbm2024lem1.21}
\begin{aligned}
\sup_{x \in \mathbb{R}^N} \int_{B(x, 1)}| | w_n(s)|-1|^2+\left(\Theta^2\left(\left|w_n(s)\right|\right)-1\right)^2 \rightarrow 0
\end{aligned}
\end{equation}
as $n \rightarrow \infty$, then Lemma 1.2 in \cite{BBM2024} proves the same conclusion as in Lemma \ref{bbm2024lem1.2}.
Therefore, Lemma \ref{bbm2024lem1.2} is a refinement of Lemma 1.2 in \cite{BBM2024}.

Note also that, by a result in (\cite{Maris2013}, pp. 114-115), we know that for $N \geq 3$ and $w \in \dot{H}^1(\mathbb{R}^N)$, $| w |-1 \in L^2(\mathbb{R}^N)$ if and only if $\Theta^2(|w|)-1 \in L^2(\mathbb{R}^N)$.
Therefore, condition
\eqref{rembbm2024lem1.21} is equivalent to condition \eqref{ann5.13}.

\end{remark}

\begin{Lemma}
\label{annlem4.4}
Assume that $N \geq 2$ and $0<\nu<\sqrt{2}$. We can construct a function
from $[2, \infty) \rightarrow 1+H^{1}(\mathbb{R}^N)$,
$r \mapsto w_r, ~r \geq 2$, satisfying:

(i) $\int_{\mathbb{R}^N} \sum_{i=2}^N\left|\frac{\partial w_r}{\partial x_i}\right|^2 d x \leq c_1 r^{N-2}+c_2 r^{N-2} \ln r$,

(ii)
$$
\begin{aligned}
& -c_3 r^{N-2}-2 \pi \nu \omega_{N-1} r^{N-1} \leq
 \int_{\mathbb{R}^N}\left|\frac{\partial w_r}{\partial x_1}\right|^2+V\left(|w_r|^2\right) d x + \nu P(w_r) \\
& \leq c_4 r^{N-2}+c_5 r^{N-2} \ln r-2 \pi \nu \omega_{N-1}(r-2)^{N-1},
\end{aligned}
$$
where $c_i, ~i=1, \cdots, 5$ are constants depending only on dimension and $\omega_{N-1}$ is the volume of $B_{\mathbb{R}^{N-1}}(0,1)$.

\end{Lemma}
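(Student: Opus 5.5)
The plan is the vortex-ring construction of Bethuel--Orlandi--Smets and Mariş (Lemma 4.4 in \cite{Maris2013}, Lemma 8.1 in \cite{CM2017}). Fix $r\ge 2$ and write $x=(x_1,x^\perp)$ with $\rho=|x^\perp|$. In the half-plane $(x_1,\rho)$, $\rho\ge 0$, consider the two points $(0,\pm r)$, and let $\theta_r(x_1,\rho)$ be the phase of a vortex--antivortex pair, i.e. the angle of $(x_1,\rho-r)$ minus the angle of $(x_1,\rho+r)$. Rotating this about the $x_1$-axis produces a vortex ring of radius $r$, and its phase is discontinuous only across the disc $D_r=\{x_1=0,\ |x^\perp|<r\}$. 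To get an element of $1+H^1$, we set $w_r=\rho_r e^{i\phi_r}$. Here $\rho_r$ is a modulus equal to $0$ on the core circle $\{x_1=0,\ |x^\perp|=r\}$ and equal to $1$ outside a $1$-tube around it. The phase $\phi_r$ coincides with $\theta_r$ in a region of size about $r$ around the ring. Far away, $\phi_r$ is cut off to $0$ (for instance, $\theta_r$ decays like a dipole, so one can multiply by a cutoff at scale $\sim r$ and control the error). A cleaner alternative, following \cite{Maris2013}, is to define $w_r$ directly as $1$ outside $B(0,2r)$ and to glue the phase by a linear interpolation in the annulus.

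\textbf{Step 1: kinetic estimates (i) and the $|\partial_{x_1}w_r|^2$ part of (ii).} In the $(x_1,\rho)$-plane the Dirichlet energy of the pair phase on the region at distance $\ge 1$ from the cores is $O(\ln r)$, and the cores contribute $O(1)$. Integrating with the weight $\rho^{N-2}\,d\rho$ over $\rho\lesssim 3r$ bounds the total $\int|\nabla w_r|^2$ by $c r^{N-2}(1+\ln r)$. This gives (i), and also an $O(r^{N-2}\ln r)$ bound for $\int|\partial_{x_1}w_r|^2$. The potential term is supported in the tube of volume $\sim r^{N-2}$ around the core circle, and it is bounded there because $|w_r|\le 1$. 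Hence $\int V(|w_r|^2)$ is $O(r^{N-2})$, and it is nonnegative up to such a term.

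\textbf{Step 2: momentum (the main step).} For $w=\rho e^{i\phi}$ with $\rho=1$ outside a compact set, $\langle i w_{x_1},w\rangle=-\rho^2\partial_{x_1}\phi$. Write $\rho^2=1+(\rho^2-1)$. The term $-\partial_{x_1}\phi$ is the $x_1$-derivative of a function, so it lies in $\mathcal Z$ and $L$ vanishes on it. The remaining term is $(1-\rho^2)\partial_{x_1}\phi$, which is $L^1$, supported near the ring. This is not directly the leading order; instead one uses the standard identity $P(w)=-\int(\rho^2-1)\partial_{x_1}\phi$ together with the jump of $\phi$. More concretely, I would follow \cite{Maris2013}: integrate in $x_1$ on each line $x^\perp=$const with $|x^\perp|<r$. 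Along such a line the phase increases by $2\pi$ across $D_r$, and the interpolation with $\rho^2$ gives a contribution of exactly $-2\pi$ per line, up to an error coming from the lines meeting the $1$-tube. Integrating over $x^\perp$ gives
\[
P(w_r)=-2\pi\,\omega_{N-1}r^{N-1}+O(r^{N-2}),
\]
and more precisely the two-sided bounds
\[
-2\pi\omega_{N-1}r^{N-1}\le P(w_r)\le -2\pi\omega_{N-1}(r-2)^{N-1}+O(r^{N-2}).
\]
These hold because every line with $|x^\perp|<r-2$ (respectively $<r$) sees the full (respectively at most the full) winding. The sign is arranged, by choosing the orientation of the pair, so that $P<0$.

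\textbf{Step 3: combine.} Adding Steps 1 and 2 gives (ii): the lower bound uses $V$-term $\ge -c r^{N-2}$ and $|\partial_{x_1}w_r|^2\ge 0$, while the upper bound uses the $r^{N-2}\ln r$ estimates. Continuity of $r\mapsto w_r$ into $1+H^1$ follows from the explicit formulas. I expect the main obstacle to be Step 2, namely justifying that $L$ kills the $\mathcal Z$-part after truncating the phase, and getting the clean per-line $2\pi$ count with errors of order $r^{N-2}$. For this I would invoke the explicit momentum computation of Lemma 4.4 in \cite{Maris2013}.
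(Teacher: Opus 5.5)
Your plan is essentially the paper's, with a different phase. The paper uses Mariş's explicit vortex ring $w_r=\chi_r e^{i\varphi_r}$:
\begin{itemize}
\item $\varphi_r$ increases linearly in $x_1$ from $0$ to $2\pi$ across the double cone $\{|x_1|<r-|x^\perp|\}$, equals $2\pi$ beyond it inside the cylinder $\{|x^\perp|<r\}$, and equals $0$ elsewhere;
\item $\chi_r$ vanishes near the circle $\{x_1=0,\ |x^\perp|=r\}$ and equals $1$ at distance $\ge 2$ from it.
\end{itemize}
Since $e^{2\pi i}=1$, $w_r-1$ is compactly supported without any cutoff. The bound $|\nabla\varphi_r|\lesssim (r-|x^\perp|)^{-1}$ on the cone then gives the $r^{N-2}\ln r$ estimate. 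Your cut-off dipole phase works equally well for (i) and for the $|\partial_{x_1}w_r|^2$ and $V$ terms in (ii).

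The first half of your Step 2 is wrong, however, and taken literally it kills the lemma. Your $\phi_r$ is not in $\dot H^1$: it jumps by $2\pi$ across $D_r$, and $|\nabla\phi_r|\sim 1/\mathrm{dist}$ at the core. So $-\partial_{x_1}\phi_r\notin\mathcal Z$. If $L$ did annihilate it, you would get $P(w_r)=\int(1-\rho_r^2)\partial_{x_1}\phi_r\,dx=O(r^{N-2})$, with no leading term. The ``identity'' you then quote is exactly this remainder.

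In fact there is no $\mathcal Z$-part to dispose of. Since $w_r-1\in H^1$ has compact support, $\langle i\partial_{x_1}w_r,w_r\rangle\in L^1$. Hence $P(w_r)=-\int\rho_r^2\,\partial_{x_1}\phi_r\,dx$, where $\partial_{x_1}\phi_r$ is the pointwise derivative; it is integrable because the core has codimension $2$. Now split $\rho_r^2=1+(\rho_r^2-1)$:
\begin{itemize}
\item By Fubini, $\int_{\mathbb R}\partial_{x_1}\phi_r\,dx_1$ equals $2\pi$ for $|x^\perp|<r$ (the negative of the jump across $D_r$, with your orientation) and $0$ for $|x^\perp|>r$. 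So this piece contributes exactly $-2\pi\omega_{N-1}r^{N-1}$.
\item The remainder $\int(1-\rho_r^2)\partial_{x_1}\phi_r\,dx$ lives in the tube and is $O(r^{N-2})$.
\end{itemize}

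Your exact two-sided bound on $P(w_r)$ also does not follow. In the paper it comes from three facts: $\partial_{x_1}\varphi_r\ge 0$, $0\le\chi_r\le 1$, and $\chi_r\equiv 1$ on lines with $|x^\perp|\le r-2$. Your phase is not monotone on the lines with $|x^\perp|>r$ that cross the tube, so you only get $P(w_r)=-2\pi\omega_{N-1}r^{N-1}+O(r^{N-2})$. That is still enough, because (ii) allows $c_3r^{N-2}$ and $c_4r^{N-2}$ of slack. But you must prove it as above rather than cite Mariş, whose momentum computation is for his monotone phase.
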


{\it Proof. }
Let
$$
\varphi_r(x)=\left\{\begin{array}{cc}
0 & \left|x^{\bot}\right| \geq r, \\
0 & x_1 \leq-r+\left|x^{\bot}\right| \text { and }\left|x^{\bot}\right|<r, \\
\pi\left(\frac{x_1}{r- |x^{\bot} |}+1\right) & \quad \left|x^{\bot}\right| \leq r \text { and }
 -r+\left|x^{\bot}\right|<x_1<r-\left|x^{\bot}\right|, \\
2 \pi & x_1 \geq r- | x^{\bot}| \text { and }\left|x^{\bot}\right|<r,
\end{array}\right.
$$
where $x=\left(x_1, x^{\bot}\right) \in \mathbb{R}^N$.

Let $\chi \in C^{\infty}(\mathbb{R})$ satisfying $\chi=0$ if $x \leq -1, ~\chi=1$ if $x \geq 2$ and $\chi^{\prime}(x) \in[0,2]$.
Setting $\chi_r(x)=\chi\left(\sqrt{x_1^2+\left(\left|x^{\bot}\right|-r\right)^2}\right)$.
Setting
$$
w_r(x)=\chi_r(x) e^{i \varphi_r(x)} .
$$
Then calculating as (\cite{Maris2013}, Lemma 4.4 on pp. 147-149), we get

$$
\begin{aligned}
& \int_{\mathbb{R}^N} \left| \nabla w_r\right|^2 d x \leq c_1 r^{N-2}+c_2 r^{N-2} \ln r, \\
& \int_{\mathbb{R}^N} V\left(\left|w_r\right|^2\right) d x \leq c_6 r^{N-2}, \\
& -2 \pi \omega_{N-1} r^{N-1} \leq P\left(w_r\right) \leq-2 \pi \omega_{N-1}(r-2)^{N-1},
\end{aligned}
$$
from which we obtain the conclusion of this lemma.
\hfill
$\Box $

\bigskip

From Lemma 4.1 and Lemma 4.5 in \cite{Maris2013}, we see that $\int_{\mathbb{R}^N} V(|w|^2) d x$ and $P(w)$ are bounded
provided $E_{\operatorname{Mod}}(w)$ is bounded and $N\geq 3$. Therefore,
the action functional $S$ is bounded on
$$
\left\{w \in \mathcal{E} ~\big|~ E_{\operatorname{Mod}}(w)=j>0\right\}.
$$
Define
$$
S_{\min}(j)=\inf \left\{S(w) ~\big|~ w \in \mathcal{E}, ~E_{\operatorname{Mod}}(w)=j>0\right\}.
$$
Define
$$
S_1:=\sup \left\{S_{\min}(j) ~\big|~ j>0\right\}.
$$
Using Lemma 4.6 in \cite{Maris2013}, we see that $0<S_1<\infty$.

The following is Lemma 4.7 in \cite{Maris2013}.

\begin{Lemma}(\cite{Maris2013})
\label{annlem4.7}
Suppose $N \geq 3$.
Set
$$
S_0:=\inf \left\{S(w) ~\big|~ w \text{ is not a constant}, ~\operatorname{Poh}(w)=0\right\}.
$$
We have $S_0 \geq S_1>0$.
\end{Lemma}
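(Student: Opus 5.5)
The plan is to compare an arbitrary admissible $w$ with the quantities $S_{\min}(j)$ through the anisotropic dilations $w_{1,\lambda}$ of \eqref{ann1.5}. The positivity $S_1>0$ is already available from Lemma 4.6 in \cite{Maris2013}, as recalled just before the statement. So it suffices to show that $S(w)\geq S_1$ for every non-constant $w\in\mathcal{E}$ with $\operatorname{Poh}(w)=0$. For this it is enough to find, for every $j>0$, a function $\tilde w\in\mathcal{E}$ with $E_{\operatorname{Mod}}(\tilde w)=j$ and $S(\tilde w)\leq S(w)$. Then $S(w)\geq S_{\min}(j)$ for all $j$, and taking the supremum over $j$ gives $S(w)\geq S_1$.

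First I compute the scaling laws under $x^\perp\mapsto x^\perp/\lambda$. Since $x_1$ is untouched, $\int|\partial_{x_1}w|^2$, $\int V(|w|^2)$ and $P(w)$ all pick up the Jacobian factor $\lambda^{N-1}$. For $P$ this follows from the definition \eqref{ann1.12_0}, because the decomposition $L^1+\mathcal{Z}$ is preserved by this dilation. The transverse gradient term picks up $\lambda^{N-3}$. Hence
\[
S(w_{1,\lambda})=\lambda^{N-3}T(w)-\lambda^{N-1}J(w).
\]
If $\operatorname{Poh}(w)=0$, then $J(w)=\frac{N-3}{N-1}T(w)$, and
\[
f(\lambda):=S(w_{1,\lambda})=T(w)\Big(\lambda^{N-3}-\tfrac{N-3}{N-1}\lambda^{N-1}\Big).
\]
Differentiating gives $f'(\lambda)=(N-3)T(w)\lambda^{N-4}(1-\lambda^2)$. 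So for $N\geq4$ the function $f$ attains its maximum at $\lambda=1$, that is, $S(w_{1,\lambda})\leq S(w)$ for all $\lambda>0$.

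Second, I use the same scaling for the modified energy:
\[
E_{\operatorname{Mod}}(w_{1,\lambda})=\lambda^{N-3}T(w)+\lambda^{N-1}\int_{\mathbb{R}^N}|\partial_{x_1}w|^2+\tfrac12(\Theta^2(|w|)-1)^2\,dx .
\]
This is continuous in $\lambda$ and tends to $0$ as $\lambda\to0$ when $N\geq4$. It tends to $\infty$ as $\lambda\to\infty$ provided one of the two integrals is positive. If both vanished, $w$ would be constant: $\nabla w\in L^2$ and $\nabla w=0$ force $w$ to be constant. By the intermediate value theorem every $j>0$ equals $E_{\operatorname{Mod}}(w_{1,\lambda_j})$ for some $\lambda_j$, and $\tilde w=w_{1,\lambda_j}$ does the job. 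The only routine check is that $w_{1,\lambda}\in\mathcal{E}$, which is immediate from the scaling formulas.

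The main obstacle is $N=3$. There $\operatorname{Poh}(w)=0$ means $J(w)=0$, so $S(w_{1,\lambda})=T(w)$ is constant in $\lambda$. However, $E_{\operatorname{Mod}}(w_{1,\lambda})$ only sweeps out $(T(w),\infty)$, so the values $j\leq T(w)$ are not reached. To cover small $j$, I would first try a second family of dilations. One option is the $x_1$-dilation $w_{\mu,1}$, under which $T$ and $V$ scale like $\mu$, $\int|\partial_1w|^2$ scales like $\mu^{-1}$ and $P$ is invariant. Another is a combined family $w_{\mu,\lambda}$. The aim is to show that along a suitable curve $S$ does not exceed $S(w)$ while $E_{\operatorname{Mod}}\to0$. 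Failing that, I would use a monotonicity property of $S_{\min}$, namely that $S_{\min}(j)$ is nondecreasing for $j$ below the level where the supremum $S_1$ is approached, as in the corresponding part of \cite{Maris2013}. Then only large $j$ matter, and for those the $\lambda$-family already suffices. Since the paper only needs $N\geq4$ in what follows, the argument in the first two steps is the essential one.
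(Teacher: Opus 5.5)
For $N\geq 4$ your argument is correct. The paper gives no proof here and simply imports the lemma from \cite{Maris2013}, but your mechanism is the one it uses in Proposition~\ref{cmprop8.3}(i): on the Pohozaev set, $\lambda\mapsto S(w_{1,\lambda})$ is maximal at $\lambda=1$, while $E_{\operatorname{Mod}}(w_{1,\lambda})$ sweeps $(0,\infty)$.

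\textbf{The gap is the case $N=3$, which the statement includes and your proposal does not prove.} There $J(w)=0$, so $S(w_{1,\lambda})\equiv T(w)=S(w)$. However, $E_{\operatorname{Mod}}(w_{1,\lambda})=T(w)+\lambda^2H(w)$ (with $H$ as in Section~\ref{annsec6}) only covers $(T(w),\infty)$. So you get $S(w)\geq S_{\min}(j)$ only for $j>T(w)$, and the levels $j\in(0,T(w)]$ remain open.

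Neither remedy you list closes this as written.
\begin{itemize}
\item The $x_1$-dilation cannot reach small levels. Indeed, $E_{\operatorname{Mod}}(w_{\mu,1})=\mu\bigl(T(w)+\frac12\int(\Theta^2(|w|)-1)^2\bigr)+\mu^{-1}\int|\partial_{x_1}w|^2$ is bounded below by a positive constant when $w$ is not constant.
\item The monotonicity of $S_{\min}$ you invoke is neither established by you nor available in the paper.
\end{itemize}
Observing that later sections only use $N\geq 4$ does not prove the lemma as stated.

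\textbf{The missing ingredient is the elementary bound $S_{\min}(j)\leq j$ for every $j>0$.}
\begin{itemize}
\item Take a real-valued $\chi\in C_c^\infty(\mathbb{R}^N)$, $\chi\not\equiv 0$, and set $\phi=1+\epsilon\chi(\cdot/R)$.
\item Since $\phi$ is real, $\langle i\phi_{x_1},\phi\rangle=0$, so $P(\phi)=0$.
\item For $\epsilon$ small, $\Theta(|\phi|)=|\phi|$. By the expansion \eqref{ann1.6}, $\int V(\phi^2)\leq(\frac12+o_\epsilon(1))\int(\phi^2-1)^2$, hence $S(\phi)\leq(1+o_\epsilon(1))E_{\operatorname{Mod}}(\phi)$.
\item Because $N\geq 3$, $R\mapsto E_{\operatorname{Mod}}(\phi)$ runs continuously from $0$ to $\infty$, so you can impose $E_{\operatorname{Mod}}(\phi)=j$.
\item Letting $\epsilon\to 0$ gives $S_{\min}(j)\leq j$.
\end{itemize}
Then for $j\leq T(w)$ you have $S_{\min}(j)\leq j\leq T(w)=S(w)$. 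Together with your dilation argument for $j>T(w)$, this gives $S(w)\geq S_1$ for $N=3$ as well.

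Your combined-dilation idea can also be completed. Set $v=w_{1,\lambda}$ with $\lambda$ so small that $\lambda^2\bigl(\int|\partial_{x_1}w|^2+|\int V(|w|^2)|\bigr)\leq T(w)$. Since $J(v)=0$, for $0<\mu\leq 1$ you get $S(v(\cdot/\mu))=\mu T(w)+\mu(1-\mu)\lambda^2\bigl(\int|\partial_{x_1}w|^2-\mu\int V(|w|^2)\bigr)\leq T(w)$. Meanwhile $E_{\operatorname{Mod}}(v(\cdot/\mu))$ sweeps $(0,E_{\operatorname{Mod}}(v)]$, which contains $(0,T(w)]$.
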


For $w$ is not a constant of modulus 1 and $\operatorname{Poh}(w)=0$, i.e.,
$$
\operatorname{Poh}(w)=\frac{N-3}{N-1} T(w)-J(w)=0,
$$
we have $J(w)=\frac{N-3}{N-1} T(w)$, from which we obtain
\begin{equation}
\label{bbm3.3}
\begin{aligned}
S(w)=T(w)-J(w)=\frac{2}{N-1} T(w),
\end{aligned}
\end{equation}
$$
\begin{aligned}
J(w)=\frac{N-3}{2} S(w).
\end{aligned}
$$

Define
$$
\mathcal{W}:=\{w \in \mathcal{E} ~\big|~ J(w)>0\}.
$$
The following is a Sobolev-type inequality.

\begin{Lemma}
\label{bbmlem3.2}
Assume $N \geq 4$.
For any $w \in \mathcal{W}$, we have
$$
\int_{\mathbb{R}^N} \sum_{i=2}^N\left|\frac{\partial w}{\partial x_i}\right|^2 d x \geq C_0 S_0^{\frac{2}{N-1}}(J(w))^{\frac{N-3}{N-1}},
$$
where $C_0:=\left(\frac{1}{2}\right)^{\frac{2}{N-1}} \frac{N-1}{(N-3)^{\frac{N-3}{N-1}}}$.
\end{Lemma}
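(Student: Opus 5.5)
The plan is to use the scaling $w \mapsto w_{1,\lambda}$ in the transverse variables to move $w$ onto the Pohozaev manifold, and then apply the definition of $S_0$ from Lemma \ref{annlem4.7}.

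First, the scaling laws. For $w_{1,\lambda}(x)=w(x_1,x^\perp/\lambda)$ a change of variables gives
$$T(w_{1,\lambda})=\lambda^{N-3}T(w),\qquad J(w_{1,\lambda})=\lambda^{N-1}J(w).$$
For $J$ one uses that $\partial_{x_1}$ commutes with the dilation, so $\langle i\partial_{x_1} w_{1,\lambda},w_{1,\lambda}\rangle$ is the dilate of $\langle i w_{x_1},w\rangle$. The decomposition $L^1+\mathcal Z$ is preserved under the dilation, with the $L^1$ part rescaled by $\lambda^{N-1}$ and the $\mathcal Z$ part staying in $\mathcal Z$. Hence $P(w_{1,\lambda})=\lambda^{N-1}P(w)$. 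I expect this momentum scaling to be the only point needing care. It is standard from Maris's paper, so I would cite it.

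Second, put $w$ on the Pohozaev manifold. Fix $w\in\mathcal W$, so $J(w)>0$ and $w$ is not constant. Then $T(w)>0$: if $T(w)=0$, then $w$ depends only on $x_1$, and being in $\mathcal E$ forces $w$ to be constant (in $N\ge 2$), contradicting $J(w)>0$. Since $N\ge4$ the exponents $N-3$ and $N-1$ differ, so the equation
$$\tfrac{N-3}{N-1}\lambda^{N-3}T(w)=\lambda^{N-1}J(w)$$
has the unique solution
$$\lambda_*^2=\frac{(N-3)T(w)}{(N-1)J(w)}.$$
Then $\operatorname{Poh}(w_{1,\lambda_*})=0$ and $w_{1,\lambda_*}$ is non-constant.

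Third, apply the definition of $S_0$. By Lemma \ref{annlem4.7} and \eqref{bbm3.3},
$$S_0\le S(w_{1,\lambda_*})=\tfrac{2}{N-1}T(w_{1,\lambda_*})=\tfrac{2}{N-1}\lambda_*^{N-3}T(w).$$
Write $A=T(w)$ and $B=J(w)$. Substituting $\lambda_*^{N-3}=\big(\tfrac{(N-3)A}{(N-1)B}\big)^{(N-3)/2}$ gives
$$S_0\le \tfrac{2}{N-1}\Big(\tfrac{N-3}{N-1}\Big)^{\frac{N-3}{2}}A^{\frac{N-1}{2}}B^{-\frac{N-3}{2}}.$$
Raising both sides to the power $2/(N-1)$ yields
$$A\ge \Big(\tfrac{N-1}{2}\Big)^{\frac{2}{N-1}}\Big(\tfrac{N-1}{N-3}\Big)^{\frac{N-3}{N-1}}S_0^{\frac{2}{N-1}}B^{\frac{N-3}{N-1}}.$$
The exponents of $N-1$ sum to $\tfrac{2}{N-1}+\tfrac{N-3}{N-1}=1$, so the constant simplifies to
$$2^{-\frac{2}{N-1}}(N-1)(N-3)^{-\frac{N-3}{N-1}}=C_0,$$
which is the claimed inequality.

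Beyond the momentum scaling, the remaining details to check are that $w_{1,\lambda}\in\mathcal E$, and that the infimum defining $S_0$ is taken over $\mathcal E$, so that $w_{1,\lambda_*}$ is an admissible competitor.
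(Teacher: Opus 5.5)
Your proposal is correct and follows the paper's proof essentially step for step: you rescale in $x^{\perp}$ by $\lambda_w=\big(\frac{(N-3)T(w)}{(N-1)J(w)}\big)^{1/2}$ to land on $\{\operatorname{Poh}=0\}$, use $S=\frac{2}{N-1}T$ there, compare with $S_0$, and the constant simplifies exactly to $C_0$. Your additional check that $T(w)>0$, which guarantees $\lambda_w>0$ and that $w_{1,\lambda_w}$ is non-constant, fills in a detail the paper leaves implicit.
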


{\it Proof. }
Let $w \in \mathcal{W}$.
We have
$$
\operatorname{Poh}\left(w_{1, \lambda}\right)=\lambda^{N-3} \frac{N-3}{N-1} T(w)-\lambda^{N-1} J(w).
$$
Hence,
$$
\frac{d}{d \lambda} \operatorname{Poh}\left(w_{1, \lambda}\right)=\frac{(N-3)^2}{N-1} \lambda^{N-4} T(w)-(N-1) \lambda^{N-2} J(w).
$$
Set
$$
\lambda_1=\frac{N-3}{N-1}\left(\frac{T(w)}{J(w)}\right)^{\frac{1}{2}}.
$$
Then on the interval $\lambda \in\left(0, \lambda_1\right)$,
$$
\begin{aligned}
& \frac{d}{d \lambda} \operatorname{Poh}\left(w_{1, \lambda}\right)>0;
\end{aligned}
$$
for $\lambda \in\left(\lambda_1, \infty\right)$,
$$
\begin{aligned}
\frac{d}{d \lambda} \operatorname{Poh}\left(w_{1, \lambda}\right)<0 .
\end{aligned}
$$
Set 
$$
\lambda_w:=\left(\frac{(N-3) T(w)}{(N-1) J(w)}\right)^{\frac{1}{2}}.
$$
It is clear that $\operatorname{Poh}(w_{1, \lambda_w})=0$ and $\lambda_w$ is the unique point in the interval $(0, \infty)$ such that the function $\lambda \mapsto \operatorname{Poh}(w_{1, \lambda})$ reaches 0.
By the definition of $S_0$ and \eqref{bbm3.3}, we have
$$
\begin{aligned}
S_0 & \leq S\left(w_{1, \lambda_w}\right)=\frac{2}{N-1} T\left(w_{1, \lambda_w}\right) 
 =\frac{2}{N-1} \lambda_w^{N-3} T(w) \\
& =2 \frac{(N-3)^{\frac{N-3}{2}}}{(N-1)^{\frac{N-1}{2}}} \frac{(T(w))^{\frac{N-1}{2}}}{(J(w))^{\frac{N-3}{2}}},
\end{aligned}
$$
which is equivalent to
$$
T(w) \geq C_{0} S_0^{\frac{2}{N-1}}(J(w))^{\frac{N-3}{N-1}} .
$$

\hfill
$\Box $

Lemma \ref{bbmlem3.2} is essentially Proposition 3.2 in \cite{BBM2024}.

\begin{Lemma}
\label{cmthm8.1_1}
Suppose $N \geq 4$.
For any $j>0$, the set
$$
\{w \in \mathcal{E} ~\big|~ J(w)=j\}
$$
is not empty.

\end{Lemma}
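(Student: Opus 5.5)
The plan is to produce one function with $J>0$ and then use the anisotropic dilation $w\mapsto w_{\lambda_1,\lambda_2}$ to move its value of $J$ to any prescribed level. The candidate comes from Lemma \ref{annlem4.4}. For the family $r\mapsto w_r$ built there, part (ii) of that lemma gives
$$
J(w_r)\geq 2\pi\nu\,\omega_{N-1}(r-2)^{N-1}-c_4 r^{N-2}-c_5 r^{N-2}\ln r .
$$
Since $\nu>0$ and $N-1>N-2$, the term of order $r^{N-1}$ dominates for large $r$. Hence we can fix $r_*$ large with $J(w_{r_*})=:j_*>0$. We also know $w_{r_*}\in 1+H^1(\mathbb{R}^N)\subset\mathcal{E}$.

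Next we use the transverse dilation $w\mapsto w_{1,\lambda}$. Under $x^\perp\mapsto x^\perp/\lambda$ each term of $J$ scales like $\lambda^{N-1}$:
\begin{itemize}
\item the change of variables in $\int |\partial_{x_1}w|^2$ and $\int V(|w|^2)$ contributes a Jacobian factor $\lambda^{N-1}$;
\item for the momentum, $\langle i\partial_{x_1}w_{1,\lambda},w_{1,\lambda}\rangle$ is the dilate of $\langle i\partial_{x_1}w,w\rangle$, so we get $P(w_{1,\lambda})=\lambda^{N-1}P(w)$.
\end{itemize}
This last point is the same scaling that underlies the formula $\operatorname{Poh}(w_{1,\lambda})=\lambda^{N-3}\frac{N-3}{N-1}T(w)-\lambda^{N-1}J(w)$ used in the proof of Lemma \ref{bbmlem3.2}. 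It needs the remark that the decomposition $L^1+\mathcal{Z}$ is preserved by dilations: the $L^1$ part scales by $\lambda^{N-1}$ in integral, and $\partial_{x_1}v$ dilates to $\partial_{x_1}(v_{1,\lambda})$ with $v_{1,\lambda}\in\dot H^1$. Therefore $J(w_{1,\lambda})=\lambda^{N-1}J(w)$, and $w_{1,\lambda}\in\mathcal{E}$.

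Choosing $\lambda=(j/j_*)^{1/(N-1)}$ gives $J\big((w_{r_*})_{1,\lambda}\big)=j$ for every $j>0$, which proves the lemma. The main point to check is the justification of $P(w_{1,\lambda})=\lambda^{N-1}P(w)$ through the definition of $L$. This is routine but should be written out, or cited from \cite{Maris2013}, Section 2.

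The restriction $N\geq 4$ plays no essential role here beyond consistency with the rest of the paper. Lemma \ref{annlem4.4} holds for $N\geq 2$; the argument only needs $N-1>N-2$ together with the logarithmic correction, which is true for all $N\ge 2$.
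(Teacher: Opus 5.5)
Your proof is correct and matches the paper's argument. Both take $w_r$ from Lemma~\ref{annlem4.4} with $r$ large so that $J(w_r)>0$, then apply the transverse dilation $w\mapsto w_{1,\sigma}$ with $\sigma^{N-1}=j/J(w_r)$. Your explicit lower bound on $J(w_r)$ and your check that $P(w_{1,\lambda})=\lambda^{N-1}P(w)$ via the $L^1+\mathcal{Z}$ decomposition fill in steps the paper takes for granted.
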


{\it Proof. }
It is clear that
$$
1+H^{1}\left(\mathbb{R}^N\right) \subset \mathcal{E} .
$$
For $r>2$, consider a function $w_r$ constructed in Lemma \ref{annlem4.4}. Clearly, when $r$ is large enough,
$$
\begin{aligned}
J\left(w_r\right)=-\left[  \int_{\mathbb{R}^N}\left|\frac{\partial w_r}{\partial x_1}\right|^2+V\left(\left|w_r\right|^2\right) d x 
 +\nu P\left(w_r\right)\right]>0 .
\end{aligned}
$$
Assume that $J(w_r)=j_1>0$.
Let $\sigma=(\frac{j}{j_1})^{\frac{1}{N-1}}$. Then
$$
J\left(\left(w_r\right)_{1, \sigma}\right)=\sigma^{N-1} J\left(w_r\right)=\sigma^{N-1} j_1=j .
$$
Therefore, the conclusion of this lemma holds.

\hfill
$\Box $

Recall that we have defined in the introduction that 
$$
T_{\min}(j)=\inf \left\{ T(w) ~\big|~ w \in \mathcal{E}, ~J(w)=j>0 \right\}.
$$
The following lemma reveals the properties of $T_{\min}$.

\begin{Lemma}
\label{cmlem4.7}
Suppose $N \geq 4$.
Then

(i) $T_{\text{min}}$ is increasing on $(0, \infty)$, concave and continuous. More precisely, for $j>0$,
$$
T_{\min}(j)=j^{\frac{N-3}{N-1}} T_{\min}(1),
$$
and $T_{\min}(1)>0$.

\medskip

(ii) $T_{\min}$ is strictly subadditive:
$$
T_{\min}(j)<T_{\min}\left(j-j_1\right)+T_{\min}\left(j_1\right)
$$
for all $0<j_1<j$.
\end{Lemma}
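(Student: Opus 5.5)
The whole lemma follows from the scaling behaviour of $T$ and $J$ under the dilations $w\mapsto w_{1,\sigma}$, which act only in the transverse variables $x^\perp$. A change of variables gives
$$
T(w_{1,\sigma})=\sigma^{N-3}T(w), \qquad J(w_{1,\sigma})=\sigma^{N-1}J(w).
$$
The first holds because each transverse derivative picks up $\sigma^{-2}$ and the volume element picks up $\sigma^{N-1}$. The second holds because $|\partial_{x_1}w|^2$ and $V(|w|^2)$ are not differentiated in $x^\perp$. For the momentum $P$, I use that the decomposition $\langle i w_{x_1},w\rangle=u+v$ with $u\in L^1$, $v\in\mathcal Z$ is transported by the dilation: $u_{1,\sigma}\in L^1$ has integral $\sigma^{N-1}\int u$, and $v_{1,\sigma}=\partial_{x_1}(\tilde v_{1,\sigma})$ with $\tilde v_{1,\sigma}\in\dot H^1$. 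This scaling was already used in Lemma \ref{cmthm8.1_1} and in the proof of Lemma \ref{bbmlem3.2}.

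\textbf{Homogeneity.} The map $w\mapsto w_{1,\sigma}$ is a bijection of $\mathcal E$ with inverse $w\mapsto w_{1,1/\sigma}$. Fix $j>0$ and set $\sigma=j^{1/(N-1)}$. Then this map sends $\{J=1\}$ bijectively onto $\{J=j\}$ and multiplies $T$ by $\sigma^{N-3}=j^{\frac{N-3}{N-1}}$. Taking infima gives $T_{\min}(j)=j^{\frac{N-3}{N-1}}T_{\min}(1)$.

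\textbf{Finiteness and positivity of $T_{\min}(1)$.} The set $\{J=1\}$ is nonempty by Lemma \ref{cmthm8.1_1}, and $T$ is finite on $\mathcal E$, so $T_{\min}(1)<\infty$. Lemma \ref{bbmlem3.2} gives $T_{\min}(1)\ge C_0S_0^{2/(N-1)}$, which is positive since $S_0>0$ by Lemma \ref{annlem4.7}. With $0<T_{\min}(1)<\infty$ and exponent $\alpha=\frac{N-3}{N-1}\in(0,1)$ for $N\ge4$, the function $j\mapsto j^\alpha T_{\min}(1)$ is strictly increasing, continuous and concave on $(0,\infty)$. This proves (i).

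\textbf{Strict subadditivity.} Part (ii) then reduces to the strict inequality
$$
j^\alpha<(j-j_1)^\alpha+j_1^\alpha \qquad \text{for } 0<j_1<j,\ 0<\alpha<1,
$$
multiplied by $T_{\min}(1)>0$. To prove it, write $t=j_1/j\in(0,1)$, so it becomes $1<(1-t)^\alpha+t^\alpha$. This holds because $s^\alpha>s$ for $s\in(0,1)$ when $\alpha<1$, so $(1-t)^\alpha+t^\alpha>(1-t)+t=1$.

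\textbf{Main obstacle.} The only point needing care is justifying the scaling law for $P$ under the transverse dilation, given that $P$ is defined through the $L^1+\mathcal Z$ decomposition; everything else is elementary. I will handle it as described above, checking that $\partial_{x_1}$ commutes with the $x^\perp$-dilation and that $\dot H^1$ is preserved.
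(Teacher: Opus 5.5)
Your proposal is correct and follows the paper's proof. The transverse-dilation scaling $T(w_{1,\sigma})=\sigma^{N-3}T(w)$, $J(w_{1,\sigma})=\sigma^{N-1}J(w)$ gives the homogeneity $T_{\min}(j)=j^{\frac{N-3}{N-1}}T_{\min}(1)$, and Lemma \ref{bbmlem3.2} with $S_0>0$ gives $T_{\min}(1)>0$. Your extra details fill in what the paper leaves implicit when it says (ii) follows directly from (i): the scaling of $P$ through the $L^1+\mathcal Z$ decomposition, finiteness of $T_{\min}(1)$ via Lemma \ref{cmthm8.1_1}, and the explicit inequality $(1-t)^\alpha+t^\alpha>1$.
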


{\it Proof. }
Let $\sigma>0$. Using the scaling properties:
$$
\begin{aligned}
& J\left(w_{1,\sigma}\right)=\sigma^{N-1} J(w), \\
& T\left(w_{1,\sigma}\right)=\sigma^{N-3} T(w),
\end{aligned}
$$
we have
$$
\begin{aligned}
 T_{\min}(j)&=\inf \{T(w) ~\big|~ J(w)=j\} \\
&=\inf \left\{T\left(w_{1,\sigma}\right) ~\big|~ J\left(w_{1,\sigma}\right)=j\right\} \\
&=\inf \left\{\sigma^{N-3} T(w) ~\big|~ \sigma^{N-1} J(w)=j\right\} \\
&=\inf \left\{j^{\frac{N-3}{N-1}} T(w) ~\big|~ J(w)=1\right\} \\
&= j^{\frac{N-3}{N-1}}T_{\min}(1),
\end{aligned}
$$
where we have set $\sigma^{N-1}=j$ in the fourth identity.
Using Lemma \ref{bbmlem3.2}, we have
$$
T_{\min}(1) \geq C_0 S_0^{\frac{2}{N-1}}>0.
$$
Then the proof of (i) is finished. (ii) follows directly from (i).
\hfill
$\Box $

\begin{Lemma}
\label{annlem4.8}
Let $N\geq 4$. For any $w \in \mathcal{E}$ with $J(w)>j>0$, we have
$$
T(w)>C_0 S_0^{\frac{2}{N-1}} j^{\frac{N-3}{N-1}} .
$$

\end{Lemma}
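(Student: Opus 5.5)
This follows directly from the Sobolev-type inequality of Lemma \ref{bbmlem3.2} together with the strict monotonicity of the power function $t \mapsto t^{\frac{N-3}{N-1}}$ on $(0,\infty)$. Since $N \geq 4$, the exponent $\frac{N-3}{N-1}$ is strictly positive, so this map is strictly increasing. All constants involved are positive: $C_0>0$ explicitly, and $S_0>0$ by Lemma \ref{annlem4.7}.

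The plan is as follows. Take $w \in \mathcal{E}$ with $J(w)>j>0$. Then $J(w)>0$, so $w \in \mathcal{W}$, and Lemma \ref{bbmlem3.2} applies. It gives
$$
T(w) \geq C_0 S_0^{\frac{2}{N-1}} (J(w))^{\frac{N-3}{N-1}}.
$$
Because $J(w)>j>0$ and $\frac{N-3}{N-1}>0$, we have $(J(w))^{\frac{N-3}{N-1}} > j^{\frac{N-3}{N-1}}$. Multiplying by the positive constant $C_0 S_0^{\frac{2}{N-1}}$ preserves the strict inequality, so
$$
T(w) \geq C_0 S_0^{\frac{2}{N-1}} (J(w))^{\frac{N-3}{N-1}} > C_0 S_0^{\frac{2}{N-1}} j^{\frac{N-3}{N-1}}.
$$

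The only point needing care is that the strict inequality truly requires $N\geq 4$. For $N=3$ the exponent vanishes and the bound degenerates, which is consistent with the hypothesis. One should also confirm that $S_0$ is finite, so that the right-hand side is a genuine positive real number. Finiteness follows because the set in the definition of $S_0$ is nonempty: rescaling any $w\in\mathcal{W}$ by $\lambda_w$, as in the proof of Lemma \ref{bbmlem3.2}, produces an admissible competitor, and $\mathcal{W}\neq\emptyset$ by Lemma \ref{cmthm8.1_1}. I do not expect any real obstacle beyond this.
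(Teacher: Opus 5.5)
Your proof is correct and is essentially the paper's argument: apply Lemma \ref{bbmlem3.2} to $w\in\mathcal{W}$, then use that $t\mapsto t^{\frac{N-3}{N-1}}$ is strictly increasing for $N\geq 4$ and that $C_0 S_0^{\frac{2}{N-1}}>0$. Your aside on the finiteness of $S_0$ is harmless, though not needed, since the proof of Lemma \ref{bbmlem3.2} already gives $S_0\leq S(w_{1,\lambda_w})<\infty$.
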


{\it Proof. }
Suppose $w \in \mathcal{E}$ satisfying
$$
J(w)=j_1>j>0,
$$
then using Lemma \ref{bbmlem3.2}, we have
$$
\begin{aligned}
T(w) & \geq C_0 S_0^{\frac{2}{N-1}} j_1^{\frac{N-3}{N-1}} 
 > C_0 S_0^{\frac{2}{N-1}} j^{\frac{N-3}{N-1}}.
\end{aligned}
$$
\hfill
$\Box $

\section{Existence in dimension $N \geq 4$}
\label{annsec5}

\begin{Lemma}
\label{annlem5.1}
Assume that $\{w_n\}_{n \geq 1} \subset \mathcal{E}$. $\{T(w_n)\}_{n \geq 1}$ and $\{J(w_n)\}_{n \geq 1}$ are bounded. Then 
$\{E_{\operatorname{Mod}}(w_n)\}_{n \geq 1}$ is bounded.
\end{Lemma}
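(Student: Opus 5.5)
The plan is to argue by contradiction, using the anisotropic dilation $w_{1,\lambda}$ to push the sequence into the small-energy regime. In that regime the action restricted to the $x_1$-direction is coercive, as in the discussion around \eqref{ann1.11}. Suppose $T(w_n)\le M$ and $|J(w_n)|\le M$, but (up to a subsequence) $E_{\operatorname{Mod}}(w_n)\to\infty$. Set
$$
A(w)=\int_{\mathbb{R}^N}\Big|\frac{\partial w}{\partial x_1}\Big|^2+\frac12\big(\Theta^2(|w|)-1\big)^2\,dx ,
$$
so that $E_{\operatorname{Mod}}(w)=A(w)+T(w)$. Since $T(w_n)$ is bounded, $A(w_n)\to\infty$.

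\textbf{Rescaling.} A change of variables gives $A(w_{1,\lambda})=\lambda^{N-1}A(w)$, $T(w_{1,\lambda})=\lambda^{N-3}T(w)$ and $J(w_{1,\lambda})=\lambda^{N-1}J(w)$; the momentum scales like the $x_1$-terms, as already used in Lemma \ref{cmthm8.1_1}. Fix a small $k>0$, to be chosen below. Choose $\lambda_n$ with $\lambda_n^{N-1}A(w_n)=k/2$. Then $\lambda_n\to 0$, so $\lambda_n^{N-3}T(w_n)\to0$ because $N\ge4$. Hence $v_n:=(w_n)_{1,\lambda_n}$ satisfies $E_{\operatorname{Mod}}(v_n)\le k$ for $n$ large, and $A(v_n)=k/2$.

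\textbf{Small-energy coercivity (main step).} I would prove that there exist $k_0>0$ and $c>0$, depending on $N$, $\nu$ and $G$, such that
$$
\int_{\mathbb{R}^N}\Big|\frac{\partial v}{\partial x_1}\Big|^2+V(|v|^2)\,dx+\nu P(v)\ \ge\ c\,A(v)\qquad\text{whenever } E_{\operatorname{Mod}}(v)\le k_0 .
$$
This is the step I expect to be the main obstacle; it is essentially what underlies Lemmas 4.5--4.6 of \cite{Maris2013}. The route I would try first has four parts.
\begin{enumerate}
\item Regularize $v$ by the minimizer $u_h$ of $A^{v}_{h,\mathbb{R}^N}$ for a fixed $h$. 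By Lemma \ref{annlem3.1}(iv), applied with $U=\mathbb{R}^N$, one gets $\big||u_h|-1\big|\le\delta$ everywhere once $k_0$ is small.
\item Since $\nu<\sqrt2$, choose $\delta$ so that $\nu/(\sqrt2(1-\delta))<1-2\eta$ for some $\eta>0$. Lemma \ref{annlem4.2} then bounds $\nu|P(u_h)|$ by $(1-2\eta)$ times the $x_1$-energy of $u_h$, where the $x_1$-energy means $\int|\partial_{x_1}u_h|^2+\frac12(\Theta^2(|u_h|)-1)^2$. This requires a directional version of \eqref{ann3.1}, which should follow from the same minimality argument restricted to the $x_1$-directional terms, or alternatively by choosing $h$ small relative to $k$.
\item Transfer back from $u_h$ to $v$ using \eqref{ann3.3} and \eqref{ann3.4}; for small $h$ the errors are a small multiple of $E_{\operatorname{Mod}}(v)$.
\item Replace $\frac12(\Theta^2-1)^2$ by $V(|v|^2)$ as in the proof of Lemma \ref{bbm2024lem1.2}. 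Use \eqref{ann4.2} near $|v|=1$, and use \eqref{ann5.17} with Sobolev, which makes the contribution of $\{||v|-1|>\delta\}$ superlinearly small in $k_0$.
\end{enumerate}
The delicate point is that every error term must be absorbed by $A(v)$ and not merely by $E_{\operatorname{Mod}}(v)$, because $T$ is not part of $A$. In our application this is harmless: $A(v_n)=k/2$ while $T(v_n)\to0$, so $E_{\operatorname{Mod}}(v_n)\le 2A(v_n)$ for large $n$.

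\textbf{Conclusion.} Apply the coercivity estimate with $k\le k_0$ to get
$$
-J(v_n)\ \ge\ c\,A(v_n)=\frac{ck}{2} .
$$
On the other hand, $-J(v_n)=\lambda_n^{N-1}\big(-J(w_n)\big)\le \lambda_n^{N-1}M\to0$. This is a contradiction, so $E_{\operatorname{Mod}}(w_n)$ is bounded.
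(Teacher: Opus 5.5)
Your argument is essentially the paper's. The paper also rescales by $(w_n)_{1,\lambda_n}$ with $\lambda_n\to0$ to a fixed small level $j$ of $E_{\operatorname{Mod}}$, and concludes from $S((w_n)_{1,\lambda_n})=\lambda_n^{N-3}T(w_n)-\lambda_n^{N-1}J(w_n)\to0$; in place of your coercivity step it simply cites $S_{\min}(j)>0$ for small $j$ (Lemma 4.6(i) of \cite{Maris2013}), which is equivalent here because $T(v_n)\to0$ makes $S(v_n)$ and $-J(v_n)$ differ by $o(1)$. If you prove the step yourself, note that a ``directional version of \eqref{ann3.1}'' does not follow from the minimality of $u_h$, since $u_h$ minimizes the full isotropic functional; you do not need it either, because $\int|\partial_{x_1}u_h|^2+\frac12(\Theta^2(|u_h|)-1)^2\,dx\le E_{\operatorname{Mod}}(v_n)=A(v_n)+T(v_n)$ with $T(v_n)\to0$ suffices. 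By contrast, the cruder bound $E_{\operatorname{Mod}}(v_n)\le 2A(v_n)$ would not absorb this leading term, so state the estimate only along your sequence rather than for all $v$ with $E_{\operatorname{Mod}}(v)\le k_0$.
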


{\it Proof. }
Since $\{T(w_n)\}_{n \geq 1}$ is bounded, we only need to show that
$$
\int_{\mathbb{R}^N}\left|\frac{\partial w_n}{\partial x_1}\right|^2+\frac{1}{2}\left(\Theta^2\left(\left|w_n\right|\right)-1\right)^2 d x
$$
is bounded. Suppose that there exists a subsequence $\left\{w_n\right\}_{n \geq 1}$ with
\begin{equation}
\label{ann5.1}
\int_{\mathbb{R}^N}\left|\frac{\partial w_n}{\partial x_1}\right|^2+\frac{1}{2}\left(\Theta^2\left(\left|w_n\right|\right)-1\right)^2 d x \rightarrow \infty
\end{equation}
as $n \rightarrow \infty$.
Let $j>0$ be such that $S_{\min}(j)>0$ (the existence of such $j$ follows from Lemma 4.6 (i) in \cite{Maris2013}).

Since
\begin{equation*}
\begin{aligned}
E_{\operatorname{Mod}}\left(\left(w_n\right)_{1, \lambda}\right)&=\lambda^{N-3} T\left(w_n\right) 
+\lambda^{N-1}\int_{\mathbb{R}^N}\left|\frac{\partial w_n}{\partial x_1}\right|^2+\frac{1}{2}\left(\Theta^2\left(\left|w_n\right|\right)-1\right)^2 d x.
\end{aligned}
\end{equation*}
We see that $\lambda \mapsto E_{\operatorname{Mod}}(w_{1, \lambda})$ is monotonically increasing and
one-to-one from $(0, \infty)$ to $(0, \infty)$. Therefore,
there exists
a sequence $\{\lambda_n\}_{n \geq 1}$ with
\begin{equation}
\label{ann5.2}
\begin{aligned}
E_{\operatorname{Mod}}\left(\left(w_n\right)_{1, \lambda_n}\right)&=\lambda_n^{N-3} T\left(w_n\right) 
+\lambda_n^{N-1}\int_{\mathbb{R}^N}\left|\frac{\partial w_n}{\partial x_1}\right|^2+\frac{1}{2}\left(\Theta^2\left(\left|w_n\right|\right)-1\right)^2 d x=j.
\end{aligned}
\end{equation}

Equations \eqref{ann5.1} and \eqref{ann5.2} imply that
$$
\lim_{n \rightarrow \infty} \lambda_n=0.
$$
Since $\{T(w_n)\}_{n \geq 1}$ and
$\{J(w_n)\}_{n \geq 1}$ are bounded, we see that
$$
\begin{aligned}
\lim_{n \rightarrow \infty} S\left(\left(w_n\right)_{1, \lambda n}\right)=
\lim_{n \rightarrow \infty} \lambda_n^{N-3} T\left(w_n\right)-\lambda_n^{N-1} J\left(w_n\right)=0 .
\end{aligned}
$$
This is in contradiction with
$S_{\min}(j)>0$.
\hfill
$\Box $

\begin{Lemma}
\label{annlem5.2}
Suppose that a sequence $\{w_n\}_{n \geq 1}$ satisfies the property:
There exists a positive constant $j>0$ such that
$$
J\left(w_n\right) \rightarrow j \quad \text { as } \quad n \rightarrow \infty .
$$
Then
$$
\liminf_{n \rightarrow \infty} T\left(w_n\right) \geq C_0 S_0^{\frac{2}{N-1}} j^{\frac{N-3}{N-1}},
$$
where $C_0$ is given in Lemma \ref{bbmlem3.2},
$S_0$ is as in Lemma \ref{annlem4.7}.
\end{Lemma}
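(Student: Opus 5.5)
The plan is to apply the Sobolev-type inequality of Lemma \ref{bbmlem3.2} to each term of the sequence and then pass to the limit. The hypothesis of Lemma \ref{bbmlem3.2} is that $w\in\mathcal{W}$, i.e.\ $J(w)>0$. Since $J(w_n)\to j>0$, there is $n_0$ such that $J(w_n)>j/2>0$ for all $n\ge n_0$. Hence $w_n\in\mathcal{W}$ for $n\ge n_0$ (implicitly $w_n\in\mathcal{E}$, as in all sequences of this section). Dropping finitely many terms does not affect the $\liminf$.

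For $n\ge n_0$, Lemma \ref{bbmlem3.2} gives
$$
T(w_n)\ \ge\ C_0 S_0^{\frac{2}{N-1}}\,(J(w_n))^{\frac{N-3}{N-1}}.
$$
The map $s\mapsto s^{\frac{N-3}{N-1}}$ is continuous on $(0,\infty)$; for $N\ge4$ the exponent lies in $(0,1)$. Therefore $(J(w_n))^{\frac{N-3}{N-1}}\to j^{\frac{N-3}{N-1}}$. Taking $\liminf$ on both sides yields
$$
\liminf_{n\to\infty}T(w_n)\ \ge\ C_0 S_0^{\frac{2}{N-1}}\, j^{\frac{N-3}{N-1}},
$$
which is the claim. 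The constants $C_0$ and $S_0$ are finite and positive by Lemma \ref{annlem4.7}, so the right-hand side is a well-defined positive number.

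No step is a genuine obstacle. The only point needing care is the one above: the inequality of Lemma \ref{bbmlem3.2} holds only on $\mathcal{W}$, so one must ensure that $J(w_n)>0$ eventually, which the assumption $j>0$ guarantees. One could alternatively use Lemma \ref{annlem4.8} with any $j'<j$ and let $j'\uparrow j$. The direct argument is simpler.
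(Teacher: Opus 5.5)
Your proposal is correct and follows the paper's proof: the paper also notes that $w_n\in\mathcal{W}$ for $n$ large because $J(w_n)\to j>0$, applies the Sobolev-type inequality of Lemma \ref{bbmlem3.2} to each such $w_n$, and lets $n\to\infty$. One small point you don't actually need: Lemma \ref{annlem4.7} only gives $S_0>0$, while finiteness of $S_0$ comes from the proof of Lemma \ref{bbmlem3.2} applied to any element of the nonempty set $\mathcal{W}$.
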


{\it Proof. }
Under the assumption of the lemma, we see that $w_n \in \mathcal{W}$ for $n$ sufficiently large. Using Lemma \ref{bbmlem3.2}, we have
\begin{equation}
\label{ann5.6}
\begin{aligned}
T\left(w_n\right) \geq C_0 S_0^{\frac{2}{N-1}}\left(J\left(w_n\right)\right)^{\frac{N-3}{N-1}}.
\end{aligned}
\end{equation}
Then letting $n \rightarrow \infty$ in \eqref{ann5.6}, the proof is complete.
\hfill
$\Box $

\begin{Theorem}
\label{annthm5.3}
Let $\left\{w_n\right\}_{n \geq 1} \subset \mathcal{E}$ satisfy 
$$
\begin{aligned}
J\left(w_n\right) \rightarrow j>0 \quad \text{ and } \quad 
T\left(w_n\right) \rightarrow C_0 S_0^{\frac{2}{N-1}} j^{\frac{N-3}{N-1}} \quad \text{ as } \quad n \rightarrow \infty.
\end{aligned}
$$
Then there exists a subsequence $\left\{w_{n_i}\right\}_{i \geq 1}$ (still denoted by $\left\{w_n\right\}_{n \geq 1}$
for simplicity), and a sequence $\left\{y_i\right\}_{i \geq 1}$ and $w \in \mathcal{W}$ such that
$$
\nabla w_n\left(y+y_n\right) \rightarrow \nabla w(y)
\quad \text{ and } \quad
\left|w_n\left(y+y_n\right)\right|-1 \rightarrow\left|w(y)\right|-1
$$ 
in $L^2(\mathbb{R}^N)$.
In addition, we have $T(w)=C_0 S_0^{\frac{2}{N-1}} j^{\frac{N-3}{N-1}}$ and $J(w)=j$, i.e., $w$ minimize $T$ in 
$\{ w \in \mathcal{E} ~\big|~ J(w)=j\}$.
\end{Theorem}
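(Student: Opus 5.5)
We run the concentration-compactness argument on the sequence $\{w_n\}$, whose $T$-values tend to the lower bound $C_0 S_0^{\frac{2}{N-1}} j^{\frac{N-3}{N-1}}$ of Lemma \ref{annlem5.2}. Such a sequence is asymptotically minimizing for $T$ under $J=j$. By Lemma \ref{annlem5.1}, $E_{\operatorname{Mod}}(w_n)$ is bounded. We apply Lions' lemma to the concentration functions
\[
q_n(t)=\sup_{y\in\mathbb{R}^N}\int_{B(y,t)} |\nabla w_n|^2+\tfrac12\bigl(\Theta^2(|w_n|)-1\bigr)^2\,dx .
\]
After passing to a subsequence, $q_n\to q$ pointwise, and $\alpha=\lim_{t\to\infty}q(t)$ falls into one of three cases. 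The cases $\alpha=0$ (vanishing) and $0<\alpha<\lim E_{\operatorname{Mod}}(w_n)$ (dichotomy) will each be ruled out. Compactness then remains, giving translations $y_n$ along which the sequence is tight.

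\emph{Vanishing.} Here $\sup_x E_{\operatorname{Mod}}^{B(x,1)}(w_n)\to0$. Set $\widetilde w_n=(w_n)_{1,\lambda}$ with a fixed $\lambda>0$; vanishing and boundedness of $E_{\operatorname{Mod}}$ are preserved, up to constants depending on $\lambda$. Since
\[
S(\widetilde w_n)=\lambda^{N-3}T(w_n)-\lambda^{N-1}J(w_n)\to \lambda^{N-3}T_*-\lambda^{N-1}j ,
\]
where $T_*$ denotes the limit of $T(w_n)$, we choose $\lambda$ large so that this limit is negative. This contradicts Lemma \ref{bbm2024lem1.2}, which gives $\liminf_n S(\widetilde w_n)\ge0$.

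\emph{Dichotomy.} We pick $R_n\to\infty$ and translate so that the energy concentrates in $B(0,R_n)$, while $E_{\operatorname{Mod}}^{U_{R_n,c_4R_n}}(w_n)\to0$. Lemma \ref{annlem3.3}, with $\varepsilon_2\to0$, produces $w_{n,1},w_{n,2}$ satisfying the splitting \eqref{ann1.17} for $T$ and $J$. We also get $\liminf E_{\operatorname{Mod}}(w_{n,i})>0$ for $i=1,2$. Up to a subsequence, $J(w_{n,i})\to j_i$ with $j_1+j_2=j$. We must show that $0<j_1,j_2<j$; this is the main obstacle.

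If $j_1\le0$, then $j_2\ge j$. Lemma \ref{annlem4.8} (or Lemma \ref{annlem5.2}) then gives $\liminf T(w_{n,2})\ge C_0S_0^{\frac{2}{N-1}}j^{\frac{N-3}{N-1}}$. Combined with $T(w_n)\approx T(w_{n,1})+T(w_{n,2})$, this forces $T(w_{n,1})\to0$ while $J(w_{n,1})\to j_1\le0$ and $E_{\operatorname{Mod}}(w_{n,1})\not\to0$. We would exclude this as in Lemma \ref{annlem5.1}. Scale $w_{n,1}$ by $\lambda_n$ in $x^\perp$ so that $E_{\operatorname{Mod}}((w_{n,1})_{1,\lambda_n})=\bar j$, where $\bar j$ satisfies $S_{\min}(\bar j)>0$. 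Then $S=\lambda_n^{N-3}T-\lambda_n^{N-1}J$ must be bounded below by $S_{\min}(\bar j)>0$. When $T\to0$ and $J\to j_1\le 0$, this is not immediately contradictory; we would instead compare directly with $S_0$.

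An alternative that should work: if $j_1\le 0<j\le j_2$, then $T(w_{n,2})\ge C_0S_0^{\frac{2}{N-1}}J(w_{n,2})^{\frac{N-3}{N-1}}$ and $T(w_{n,1})\ge0$. Since $T(w_n)$ tends to the minimal value, we need $T(w_{n,1})\to0$. A function with $T\to 0$ and $E_{\operatorname{Mod}}$ bounded below has, after rescaling by $\lambda\to\infty$ in $x^\perp$, action tending to $-\infty\cdot j_1$ or to $0$. The case $j_1<0$ is then impossible, since otherwise $J(w_{n,2})\to j_2>j$ and strict monotonicity gives $\liminf T(w_{n,2})>T_*$. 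In the case $j_1=0$, the action of $(w_{n,1})_{1,\lambda}$ equals $\lambda^{N-3}T(w_{n,1})-\lambda^{N-1}J(w_{n,1})$, which tends to $0$ for every fixed $\lambda$. We then choose $\lambda_n$ so that $E_{\operatorname{Mod}}=\bar j$ (this is possible because the $x_1$-part of $E_{\operatorname{Mod}}$ stays bounded below), and obtain a contradiction with $S_{\min}(\bar j)>0$.

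Once $0<j_1,j_2<j$, we pass to the limit in
\[
T(w_n)\ge T(w_{n,1})+T(w_{n,2})+o(1)\ge C_0S_0^{\frac{2}{N-1}}\bigl(j_1^{\frac{N-3}{N-1}}+j_2^{\frac{N-3}{N-1}}\bigr)+o(1).
\]
Since $T_{\min}(j)=C_0S_0^{\frac{2}{N-1}}j^{\frac{N-3}{N-1}}$ by the hypotheses, this yields $T_{\min}(j)\ge T_{\min}(j_1)+T_{\min}(j_2)$, contradicting Lemma \ref{cmlem4.7}(ii).

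\emph{Compactness.} There exist $y_n$ such that, for every $\varepsilon>0$, some $R$ satisfies $E_{\operatorname{Mod}}^{B(y_n,R)^c}(w_n)<\varepsilon$. Boundedness gives weak limits $\nabla w_n(\cdot+y_n)\rightharpoonup\nabla w$ in $L^2$, and Rellich's theorem gives local strong convergence of $|w_n|-1$ (after fixing phases). Tightness upgrades this to strong $L^2$ convergence of $|w_n|-1$ and to convergence of $V$ and $P$. For $P$ we use the tail splitting of Lemma \ref{annlem3.3}(iii) together with \eqref{ann3.4}.

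Strong convergence of $\nabla w_n$ in $L^2$ needs more. First, by weak lower semicontinuity, $T(w)\le T_*$. Second, $J(w)=\lim J(w_n)+\lim\|\partial_{x_1}(w_n-w)\|^2$, so $J(w)\ge j$. If $J(w)>j$ or $T(w)<T_*$, Lemma \ref{annlem4.8} and the lower bound give a contradiction. Hence $J(w)=j$ and $T(w)=T_*$. This forces the norm of $\partial_{x_1}w_n$ to converge as well, and weak convergence together with convergence of norms yields strong convergence of $\nabla w_n$.
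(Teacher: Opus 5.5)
Your proposal is correct and follows the paper's proof. It uses the same concentration function, rules out vanishing by a fixed $x^\perp$-dilation combined with Lemma \ref{bbm2024lem1.2}, rules out dichotomy via the splitting of Lemma \ref{annlem3.3} and strict subadditivity, and closes the compact case by weak lower semicontinuity, $J(w)\ge j$, and Lemma \ref{annlem4.8}. The one real difference is in excluding $j_1\le 0$, where the paper only asserts $\lim T(w_{n,i})>0$ from $\lim E_{\operatorname{Mod}}(w_{n,i})>0$; your split into $j_1<0$ (strict monotonicity) and $j_1=0$ (rescale $w_{n,1}$ to $E_{\operatorname{Mod}}=\bar j$ with $\lambda_n$ bounded, contradicting $S_{\min}(\bar j)>0$) supplies the justification that step actually needs.
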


{\it Proof. }
Since $T(w_n)$ and $J(w_n)$ are bounded, we deduce from Lemma \ref{annlem5.1} that $E_{\operatorname{Mod}}(w_n)$ is bounded. 
Using Lemma \ref{annlem5.2}, we have
$$
\liminf_{n \rightarrow \infty} E_{\operatorname{Mod}}\left(w_n\right) \geq \lim_{n \rightarrow \infty} T\left(w_n\right) \geq C_0 S_0^{\frac{2}{N-1}} j^{\frac{N-3}{N-1}}.
$$
We then know that there is a subsequence (denoted as $\{w_n\}_{n \geq 1}$ ) and a constant $\beta \geq C_0 S_0^{\frac{2}{N-1}} j^{\frac{N-3}{N-1}}$ such that
\begin{equation}
\label{ann5.8}
\begin{aligned}
\lim_{n \rightarrow \infty} E_{\operatorname{Mod}}\left(w_n\right)=\beta .
\end{aligned}
\end{equation}

We apply the concentration-compactness principle (\cite{lions1984}, \cite{lions19842}) to the concentration function of $E_{\operatorname{Mod}}(w_n)$, that is,
\begin{equation}
\label{ann5.9}
\begin{aligned}
e_n(t)=\sup_{x \in \mathbb{R}^N} \int_{B(x, t)}\left|\nabla w_n\right|^2+\frac{1}{2}\left(\Theta^2\left(\left|w_n\right|\right)-1\right)^2 d y .
\end{aligned}
\end{equation}
There is a subsequence of $\{(w_n, e_n)\}_{n \geq 1}$ (denoted again by $\{(w_n, e_n)\}_{n \geq 1}$) and a nondecreasing function $e(t): \mathbb{R}^{+} \rightarrow \mathbb{R}^{+}$and a constant $0 \leq \beta_1 \leq \beta$ such that
\begin{equation}
\label{ann5.10}
\begin{aligned}
&e_n(t) \rightarrow e(t) \quad \text { a.e. on } \mathbb{R}^{+} \text { as } n \rightarrow \infty, \\
&\lim_{t \rightarrow \infty} e(t)=\beta_1 .
\end{aligned}
\end{equation}

There exists a nondecreasing sequence $t_n \rightarrow \infty$ such that $e_n(t_n) \rightarrow \beta_1$ as $n \rightarrow \infty$. 
Arguing as p. 156 in \cite{Maris2013}, we have
\begin{equation}
\label{ann5.12}
\begin{aligned}
\lim_{n \rightarrow \infty} e_n\left(t_n\right) =\lim_{n \rightarrow \infty} e_n\left(\frac{t_n}{2}\right)=\beta_1.
\end{aligned}
\end{equation}

We will prove that $\beta_1=\beta$ in \eqref{ann5.10}. We first show that $\beta_1>0$.

\begin{Lemma}
\label{annlem5.4}
Let $\{w_n\}_{n \geq 1} \subset \mathcal{E}$ satisfy

(i) $E_{\operatorname{Mod}}(w_n) \leq c_1$ for $c_1>0$,

(ii) $\lim_{n \rightarrow \infty} J(w_n)=j>0$.

Then for some positive constant $l$,
$$
\sup_{x \in \mathbb{R}^N} \int_{B(x, 1)}\left|\nabla w_n\right|^2+\frac{1}{2}\left(\Theta^2\left(\left|w_n\right|\right)-1\right)^2 d y \geq l
$$
for sufficiently large $n$.
\end{Lemma}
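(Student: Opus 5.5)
We argue by contradiction. Suppose that, along a subsequence, the sequence vanishes:
$$\sup_{x \in \mathbb{R}^N} E_{\operatorname{Mod}}^{B(x,1)}(w_n) \to 0 .$$
We will rescale $w_n$ in the transverse variables so that the action becomes strictly negative. Then Lemma \ref{bbm2024lem1.2} gives the contradiction.

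For $\lambda>0$ set $\widetilde w_n=(w_n)_{1,\lambda}$. The scaling identities give
$$S(\widetilde w_n)=\lambda^{N-3}T(w_n)-\lambda^{N-1}J(w_n),\qquad E_{\operatorname{Mod}}(\widetilde w_n)\le \max(\lambda^{N-3},\lambda^{N-1})\,E_{\operatorname{Mod}}(w_n).$$
By (i), $T(w_n)\le c_1$, and by (ii), $J(w_n)\to j>0$. Since $N-1>N-3$, we can fix $\lambda$ large enough that $\lambda^{N-3}c_1-\lambda^{N-1}j/2<0$. Then
$$\limsup_{n\to\infty} S(\widetilde w_n)\le \lambda^{N-3}c_1-\lambda^{N-1}j<0 .$$
With $\lambda$ fixed, $E_{\operatorname{Mod}}(\widetilde w_n)$ remains bounded, so assumption (i) of Lemma \ref{bbm2024lem1.2} holds for $\widetilde w_n$.

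Next we check that the rescaled sequence still vanishes. For $\lambda\ge1$ we have
$$\int_{B(x,1)}\bigl(\Theta^2(|\widetilde w_n|)-1\bigr)^2\,dy=\lambda^{N-1}\int_{D}\bigl(\Theta^2(|w_n|)-1\bigr)^2\,dz,$$
where $D=\{(z_1,z^\perp): (z_1,\lambda z^\perp)\in B(x,1)\}$ is an ellipsoid. Its $z_1$-semi-axis is $1$ and its transverse semi-axes are $1/\lambda$, so $D$ is contained in a ball of radius $1$. Hence the right-hand side is at most $\lambda^{N-1}\sup_y E_{\operatorname{Mod}}^{B(y,1)}(w_n)\to 0$. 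In general one can instead cover a box of fixed size by boundedly many unit balls. Either way, hypothesis (ii) of Lemma \ref{bbm2024lem1.2} holds for $\widetilde w_n$.

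Applying Lemma \ref{bbm2024lem1.2} with $0<\nu<\sqrt2$ then gives $\liminf_n S(\widetilde w_n)\ge0$. This contradicts $\limsup_n S(\widetilde w_n)<0$, so vanishing is impossible. Consequently there exist $l>0$ and $n_0$ with $\sup_x E_{\operatorname{Mod}}^{B(x,1)}(w_n)\ge l$ for all $n\ge n_0$. If the bound failed along a subsequence, we would run the argument above on that subsequence.

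The main point to verify is the transfer of the vanishing condition under the anisotropic dilation. Because $\lambda$ is fixed, it amounts only to a covering argument with constants depending on $\lambda$ and $N$. The rest is the scaling computation together with the previously established Lemma \ref{bbm2024lem1.2}.
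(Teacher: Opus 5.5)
Your proposal is correct and follows the paper's proof. You fix a large transverse dilation $\lambda$ so that $\limsup_n S((w_n)_{1,\lambda}) \le \lambda^{N-3}c_1-\lambda^{N-1}j<0$, note that the rescaled sequence has bounded $E_{\operatorname{Mod}}$ and still vanishes, and contradict Lemma~\ref{bbm2024lem1.2}. Your ellipsoid argument for the transfer of vanishing, which the paper only asserts, is a correct addition, up to a harmless factor $2$ coming from the $\frac12$ in $E_{\operatorname{Mod}}$.
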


{\it Proof. }Suppose that there is a subsequence (written as $\{w_n\}_{n \geq 1}$)
satisfies
\begin{equation}
\label{ann_5.13}
\begin{aligned}
\sup_{x \in \mathbb{R}^N} \int_{B(x, 1)}\left|\nabla w_n\right|^2+\frac{1}{2}\left(\Theta^2\left(\left|w_n\right|\right)-1\right)^2 d y \rightarrow 0
\end{aligned}
\end{equation}
as $n \rightarrow \infty$.

For $\lambda>0$,
$$
\begin{aligned}
S\left(\left(w_n\right)_{1, \lambda}\right) & =T\left(\left(w_n\right)_{1, \lambda}\right)-J\left(\left(w_n\right)_{1, \lambda}\right) \\
& =\lambda^{N-3} T\left(w_n\right)-\lambda^{N-1} J\left(w_n\right) \\
& =\lambda^{N-1}\left(\lambda^{-2} T\left(w_n\right)-J\left(w_n\right)\right) .
\end{aligned}
$$
From assumption (i), we see that $T(w_n)$ is upper bounded by a constant $c_1$, then using assumption (ii), we choose $\lambda$ large enough, such that
$$
S\left(\left(w_n\right)_{1, \lambda}\right)<0
$$
for sufficiently large $n$.
Let's fix this large $\lambda$ to be $\lambda_2$. Let $\widetilde{w}_n=(w_n)_{1, \lambda_2}$. Then it holds that
\begin{equation}
\label{ann_5.25}
\begin{aligned}
\limsup _{n \rightarrow \infty} S\left(\widetilde{w}_n\right)<0.
\end{aligned}
\end{equation}
Using \eqref{ann_5.13} we have
\begin{equation}
\label{ann5.23}
\begin{aligned}
\sup _{x \in \mathbb{R}^N} \int_{B(x, 1)}\left|\nabla \widetilde{w}_n\right|^2+\frac{1}{2}\left(\Theta^2\left(\left|\widetilde{w}_n\right|\right)-1\right)^2 d y \rightarrow 0
\end{aligned}
\end{equation}
 as $n \rightarrow \infty$.
In addition, using assumption (i) we have 
\begin{equation}
\label{ann5.22}
\begin{aligned}
E_{\operatorname{Mod}}\left(\widetilde{w}_n\right) \text{ is bounded from above. }
\end{aligned}
\end{equation}

Then from \eqref{ann5.22}, equation \eqref{ann5.23} and Lemma
\ref{bbm2024lem1.2}, we deduce that
$$
\limsup _{n \rightarrow \infty} S\left(\widetilde{w}_n\right) \geq 0,
$$
this is in contradiction to \eqref{ann_5.25}. Thus, the proof is complete.
\hfill
$\Box $

\begin{remark} 
Lemma 5.4 in \cite{Maris2013} plays a similar role as Lemma \ref{annlem5.4}, both lemmas are used to rule out vanishing. The proof of Lemma 5.4 in \cite{Maris2013} consists of 4 steps. 
If vanishing happens, step 1 shows that
$$
\lim_{n \rightarrow \infty}\left|E\left(w_n\right)-E_{\operatorname{Mod}}\left(w_n\right)\right|=0.
$$
Step 2 uses a scaling of $w_n$ :
$$
\widetilde{w}_n=\left(w_n\right)_{1, \lambda_3} \text {, with } \lambda_3=\left(\frac{2(N-1)}{N-3}\right)^{\frac{1}{2}}.
$$
Then
\begin{equation}
\label{ann5.21}
\begin{aligned}
\liminf_{n \rightarrow \infty} T\left(\widetilde{w}_n\right) \geq \frac{N-1}{2} \lambda_0^{N-3} S_0>0.
\end{aligned}
\end{equation}
Then under a Pohozaev constraint
$
\operatorname{Poh}(w_n)=0,
$
step 2 shows that
\begin{equation}
\label{ann5.25}
\begin{aligned}
\lim_{n \rightarrow \infty}\left(T\left(\widetilde{w}_n\right)+S\left(\widetilde{w}_n\right)\right)=0 .
\end{aligned}
\end{equation}
Using a regularization of $\widetilde{w}_n$, step 3 and step 4 show that
\begin{equation}
\label{ann1.16}
\begin{aligned}
S\left(\widetilde{w}_n\right) \geq 0,
\end{aligned}
\end{equation}
where $n$ is sufficiently large. Equations \eqref{ann5.21} and \eqref{ann1.16} are in contradiction with \eqref{ann5.25}.

\end{remark}

Next, we prove that $0<\beta_1<\beta$ is impossible. 
Let $t_n=2 R_n$, where $t_n$ is as in \eqref{ann5.12}. After a translation, let
$$
E_{\operatorname{Mod}}^{B\left(0, R_n\right)}\left(w_n\right) \geq e_n\left(R_n\right)-\frac{1}{n} .
$$
Using \eqref{ann5.12}, we have
$$
\begin{aligned}
\varepsilon_n: & =E_{\operatorname{Mod}}^{B\left(0, t_n\right) \setminus B\left(0, R_n\right)}\left(w_n\right) 
 \leq e_n\left(t_n\right)-e_n\left(R_n\right)+\frac{1}{n}.
\end{aligned}
$$
We have
$$
\lim_{n \rightarrow \infty} \varepsilon_n=0.
$$

Set $c_4=2, \varepsilon_2=\varepsilon_n, R=R_n$ in Lemma \ref{annlem3.3}, we have $w_{n, 1}$ and $w_{n, 2}$ fulfilling the conclusion of that lemma. We have
$$
\begin{gathered}
E_{\operatorname{Mod}}\left(w_{n, 1}\right) \geq e_n\left(R_n\right)-\frac{1}{n}, \\
E_{\operatorname{Mod}}\left(w_{n, 2}\right) \geq E_{\operatorname{Mod}}\left(w_n\right)-e_n\left(t_n\right), \\
\lim_{n \rightarrow \infty}\left|E_{\operatorname{Mod}}\left(w_n\right)-E_{\operatorname{Mod}}\left(w_{n, 1}\right)-E_{\operatorname{Mod}}\left(w_{n, 2}\right)\right|=0 .
\end{gathered}
$$
Using \eqref{ann5.12}, we have
\begin{equation}
\label{ann5.29}
\begin{aligned}
& \lim _{n \rightarrow \infty} E_{\operatorname{Mod}}(w_{n, 1})=\beta_1, \\
& \lim _{n \rightarrow \infty} E_{\operatorname{Mod}}(w_{n, 2})=\beta-\beta_1 .
\end{aligned}
\end{equation}
From Lemma \ref{annlem3.3},
\begin{equation}
\label{ann5.31}
\begin{aligned}
\lim _{n \rightarrow \infty}\left|T(w_n)-T(w_{n, 1})-T(w_{n, 2})\right|=0,
\end{aligned}
\end{equation}
\begin{equation}
\label{ann5.32}
\begin{aligned}
& \lim _{n \rightarrow \infty}\left|J(w_n)-J(w_{n, 1})-J(w_{n, 2})\right|=0 .
\end{aligned}
\end{equation}
$T(w_{n, 1})$ and $T(w_{n, 2})$ are bounded, up to a subsequence, $T(w_{n, i}) \rightarrow a_i \geq 0$ as $n \rightarrow \infty$. Since $\lim _{n \rightarrow \infty} E_{\operatorname{Mod}}(w_{n, i})>0$, we have $a_1, a_2>0$.
From \eqref{ann5.31}, we have
$a_1+a_2=T_{\min}(j)$. Then $a_1, a_2 \in(0, T_{\min }(j))$.

Suppose $\liminf_{n \rightarrow \infty} J(w_{n, 1}) \leq 0$. Then $\limsup_{n \rightarrow \infty} J(w_{n, 2}) \geq j$. We have
$$
T\left(w_{n, 2}\right) \geq T_{\min }\left(J\left(w_{n, 2}\right)\right) .
$$
Passing to limsup, using Lemma \ref{cmlem4.7}, we have $a_2 \geq T_{\min }(j)$, a contradiction. 
Therefore, $\liminf _{n \rightarrow \infty} J(w_{n, 1})>0$. 
Also, $\liminf _{n \rightarrow \infty} J(w_{n, 2})>0$. 
From Equation \eqref{ann5.32}, we deduce that $\limsup_{n \rightarrow \infty} J(w_{n, 1})$, $\limsup _{n \rightarrow \infty} J(w_{n, 2})<j$.
Up to a subsequence, suppose
$$
\lim _{n \rightarrow \infty} J\left(w_{n, 1}\right)=j_1, ~\lim _{n \rightarrow \infty} J\left(w_{n, 2}\right)=j_2,
$$
where $0<j_1, j_2<j, ~j_1+j_2=j$.
Since $T(w_{n, i}) \geq T_{\min}(J(w_{n, i}))$, then
$$
a_i \geq T_{\min }\left(j_i\right), ~i=1,2 .
$$
Then
$$
T_{\min}(j)=a_1+a_2 \geq T_{\min }\left(j_1\right)+T_{\min }\left(j_2\right).
$$
This inequality contradicts Lemma \ref{cmlem4.7}.

\bigskip

We have proved that $\beta_1=\beta$. Then for any $\varepsilon>0$, there exists $R_{\varepsilon}>0$, such that
\begin{equation}
\label{cm4.31}
\begin{aligned}
E_{\operatorname{Mod}}^{B\left(0, R_{\varepsilon}\right)^c}\left(w_n\right)<\varepsilon
\end{aligned}
\end{equation}
up to translation, for $n$ large enough.
We have $\int_{\mathbb{R}^N}|\nabla w_n|^2 d x<\infty$ and using Sobolev inequality we can see that
$
\int_{B(0, r_0)}|w_n|^2 d x<\infty
$
for any $r_0>0$.
Hence, there exists $w \in H_{\text{loc}}^1(\mathbb{R}^N)$ such that $\nabla w \in L^2(\mathbb{R}^2)$ and up to a subsequence,
\begin{equation}
\label{cm4.32}
\begin{aligned}
&\nabla w_n \rightharpoonup \nabla w \quad \text { in } L^2\left(\mathbb{R}^N\right), \\
&w_n \rightharpoonup w \quad \text { in } H^{1}\left(B\left(0, r_0\right)\right), \\
&w_n \rightarrow w \quad \text { in } L^p\left(B\left(0, r_0\right)\right. \text { for } 1 \leq p<\frac{2 N}{N-2}, \\
&w_n \rightarrow w \quad \text { a.e. in } \mathbb{R}^N.
\end{aligned}
\end{equation}
Using weak convergence in \eqref{cm4.32},
\begin{equation}
\label{cm4.33}
\begin{aligned}
\int_{\mathbb{R}^N}\left|\frac{\partial w}{\partial x_1}\right|^2 d x \leq \liminf _{n \rightarrow \infty} \int_{\mathbb{R}^N}\left|\frac{\partial w_n}{\partial x_1}\right|^2 d x .
\end{aligned}
\end{equation}
Using \eqref{cm4.32} and Fatou's Lemma,
\begin{equation}
\label{cm4.36}
\begin{aligned}
& E_{\operatorname{Mod}}(w) \leq \liminf _{n \rightarrow \infty} E_{\operatorname{Mod}}\left(w_n\right)=\beta, \\
& T(w) \leq \liminf _{n \rightarrow \infty} T\left(w_n\right)=T_{\min }(j) .
\end{aligned}
\end{equation}
Using \eqref{cm4.31}, similarly we have
\begin{align}
\label{cm4.37}
\begin{array}{r}
\ds E_{\operatorname{Mod}}^{B\left(0, R_{\varepsilon}\right)^c}(w) \leq \liminf_{n \rightarrow \infty} E_{\operatorname{Mod}}^{B\left(0, R_{\varepsilon}\right)^c}\left(w_n\right) \leq \varepsilon .
\end{array}
\end{align}

\medskip

Next, since the conclusion \eqref{cm4.31} and \eqref{cm4.32} fulfill the assumption of Lemma 4.11 in
\cite{CM2017}, applying Lemma 4.11 in \cite{CM2017}, we have
\begin{equation}
\label{ann5.37}
\begin{aligned}
\lim _{n \rightarrow \infty}\left\|\left|w_n\right|-|w|\right\|_{L^2\left(\mathbb{R}^N\right)}=0, 
\end{aligned}
\end{equation}
\begin{equation}
\label{ann5.36}
\begin{aligned}
\lim _{n \rightarrow \infty}\left\|V\left(\left|w_n\right|^2\right)-V\left(| w|^2\right)\right\|_{L^{1}\left(\mathbb{R}^N\right)}=0 .
\end{aligned}
\end{equation}
Note also that the conclusion \eqref{cm4.31} and \eqref{cm4.32} match the assumption of Lemma 4.12 in \cite{CM2017}.
Hence, it follows from Lemma 4.12 in \cite{CM2017} that
\begin{equation}
\label{ann5.38}
\begin{aligned}
\lim _{n \rightarrow \infty}\left|P(w)-P\left(w_n\right)\right|=0 .
\end{aligned}
\end{equation}

\medskip

From \eqref{cm4.36}, we obtain
\begin{equation}
\label{ann5.55}
\begin{aligned}
T(w) \leq \liminf _{n \rightarrow \infty} T\left(w_n\right)=C_0 S_0^{\frac{2}{N-1}} j^{\frac{N-3}{N-1}}.
\end{aligned}
\end{equation}
Using \eqref{cm4.33}, \eqref{ann5.36} and \eqref{ann5.38}, we see that
\begin{equation}
\label{ann5.56}
\begin{aligned}
J(w) \geq \limsup _{n \rightarrow \infty} J\left(w_n\right)=j, \end{aligned}
\end{equation}
where the symbol  "$\geq$" is because there is a minus sign in the definition of $J$. If $J(w)>j$, using Lemma \ref{annlem4.8},
we deduce that
$$
T(w)>C_0 S_0^{\frac{2}{N-1}} j^{\frac{N-3}{N-1}}.
$$
This estimate is in contradiction with \eqref{ann5.55}. Then we necessarily have
$$
J(w)=j.
$$
Since
$$
T(w) \geq T_{\min }(j)=\lim _{n \rightarrow \infty} T\left(w_n\right),
$$
combining with equation \eqref{ann5.55}
we have
\begin{equation}
\label{ann5.57}
\begin{aligned}
T(w)=T_{\min }(j)=\lim _{n \rightarrow \infty} T\left(w_n\right).
\end{aligned}
\end{equation}
Hence, $w$ is a minimizer of $T$ under the constraint $J(w)=j$.

Since $\lim_{n \rightarrow \infty} J(w_n)=j$, \eqref{ann5.36} and \eqref{ann5.38} hold, we obtain
\begin{equation}
\label{ann5.58}
\begin{aligned}
\int_{\mathbb{R}^N}\left|\frac{\partial w}{\partial x_1}\right|^2 d x=\lim _{n \rightarrow \infty} \int_{\mathbb{R}^N}\left|\frac{\partial w_n}{\partial x_1}\right|^2 d x .
\end{aligned}
\end{equation}
Now \eqref{ann5.57} and \eqref{ann5.58} imply that
$$
\int_{\mathbb{R}^N}\left|\nabla w_n\right|^2 d x \rightarrow \int_{\mathbb{R}^N}|\nabla w|^2 d x \quad \text { as } n \rightarrow \infty.
$$
Combining with the fact that
$$
\nabla w_n \rightharpoonup \nabla w \quad \text{ in } L^2\left(\mathbb{R}^N\right),
$$
we conclude that
$$
\nabla w_n \rightarrow \nabla w \quad \text{ in } L^2\left(\mathbb{R}^N\right),
$$
which completes the proof of
Theorem \ref{annthm5.3}.
\hfill
$\Box $

\bigskip

\begin{Proposition}
\label{annprop5.6}
Let $N \geq 4$.
Suppose that (Ass1) and (Ass2) hold.
Let $w$ be a minimizer of $T$ in
$$
\{w \in \mathcal{E} ~\big|~ J(w)=j>0\}.
$$
Then
$w \in W_{\operatorname{loc}}^{2, p}(\mathbb{R}^N)$ for $1 \leq p<\infty$,
$\nabla w \in W^{1, p}(\mathbb{R}^N)$ for $2 \leq p<\infty$, 
$w \in C^{1, \alpha}(\mathbb{R}^N)$ for $0 \leq \alpha<1$, and there exists $\lambda_0>0$ such that $w_{1, \lambda_0}$ is a solution of \eqref{ann1.4}.
The precise value of $\lambda_0$ is
$$
\lambda_0(j)=\left(\frac{N-3}{2}\right)^{\frac{1}{N-1}} S_0^{\frac{1}{N-1}} j^{-\frac{1}{N-1}}.
$$

\end{Proposition}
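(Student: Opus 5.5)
The plan is to derive an Euler--Lagrange equation with a Lagrange multiplier, use the scaling structure of $T$ and $J$ to identify the multiplier, and then rescale in the transverse variables to remove it.

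\textbf{Euler--Lagrange equation.} Let $w$ minimize $T$ on $\{J=j\}$. We first check that $J$ is $C^1$ on $\mathcal{E}$ along directions $\phi\in C_c^\infty(\mathbb{R}^N)$, which holds under (Ass1)--(Ass2), exactly as in \cite{Maris2013}. We also need $J'(w)\neq 0$. Indeed,
$$
\frac{d}{d\lambda}\Big|_{\lambda=1} J(w_{1,\lambda})=(N-1)j\neq 0 .
$$
Although $w_{1,\lambda}-w$ is not compactly supported, we can approximate this variation by test functions, or directly use the variation $\lambda\mapsto w_{1,\lambda}$ within the constraint argument. We then obtain a multiplier $\mu$ such that for all $\phi\in C_c^\infty$,
$$
T'(w)\phi=\mu J'(w)\phi ,
$$
that is,
$$
-\Delta_{x^\perp} w=\mu\big(\partial_{x_1}^2 w + G(|w|^2)w - i\nu \partial_{x_1} w\big)
$$
in $\mathcal D'$. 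Here we used $J'(w)\phi=-2\mathrm{Re}\int(\partial_1 w\,\overline{\partial_1\phi} - G(|w|^2)w\bar\phi + i\nu\partial_1 w\bar\phi)$. We must take care with the sign convention for the momentum term, which comes from Lemma 2.3 of \cite{Maris2013}.

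\textbf{Identifying $\mu$.} Since $w_{1,\lambda}$ is not admissible for fixed $j$, we instead use the Pohozaev-type identity. Testing the equation with the generator of the transverse dilation, $x^\perp\cdot\nabla_{x^\perp}w$ (justified by the truncation argument of Proposition 4.1 in \cite{Maris2008}), is equivalent to
$$
\frac{d}{d\lambda}\Big|_{\lambda=1} T(w_{1,\lambda})=\mu\,\frac{d}{d\lambda}\Big|_{\lambda=1} J(w_{1,\lambda}).
$$
This gives $(N-3)T(w)=\mu(N-1)j$. Since $T(w)=T_{\min}(j)>0$, it follows that $\mu>0$. The main obstacle is making this step rigorous, since the dilation is not a compactly supported variation. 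An alternative that avoids it is to note that $\lambda\mapsto w_{1,\lambda}$ with $\sigma$ rescaling stays a minimizer curve of $T_{\min}(j)=j^{(N-3)/(N-1)}T_{\min}(1)$. Differentiating the function $g(\lambda)=T(w_{1,\lambda})-T_{\min}(J(w_{1,\lambda}))\ge 0$, which attains its minimum $0$ at $\lambda=1$, yields $(N-3)T(w)=T_{\min}'(j)(N-1)j$, and hence $\mu=T_{\min}'(j)$.

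\textbf{Rescaling and regularity.} Put $u=w_{1,\lambda}$. Then $\Delta_{x^\perp}u=\lambda^{-2}(\Delta_{x^\perp}w)_{1,\lambda}$, while the $x_1$ terms are unchanged. The equation becomes
$$
-\lambda^2\Delta_{x^\perp}u=\mu\big(\partial_1^2u+G(|u|^2)u-i\nu\partial_1u\big).
$$
Choosing $\lambda^2=\mu$ yields \eqref{ann1.4}. To compute $\lambda_0$, use $T(w)=C_0S_0^{2/(N-1)}j^{(N-3)/(N-1)}$ and $\mu=\frac{N-3}{N-1}T(w)/j$. Substituting $C_0$ and simplifying gives $\mu=\left(\frac{N-3}{2}\right)^{2/(N-1)}S_0^{2/(N-1)}j^{-2/(N-1)}$, which matches $\lambda_0^2$. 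Regularity then follows as in \cite{Maris2013} (Proposition 2.2 / \cite{Maris2008}). Since $u$ solves an elliptic equation with a subcritical nonlinearity by (Ass2), a bootstrap gives $u\in W^{2,p}_{loc}$, $\nabla u\in W^{1,p}$ for $p\ge2$, and $C^{1,\alpha}$. These properties transfer back to $w$ because the dilation is a fixed linear change of variables.
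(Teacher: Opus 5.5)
\textbf{The gap is the positivity of the multiplier $\mu$.} In your main route you get $\mu>0$ from $(N-3)T(w)=\mu(N-1)j$. You derive that identity by testing the Euler--Lagrange equation with $x^\perp\cdot\nabla_{x^\perp}w$, using the truncation argument of Proposition 4.1 in \cite{Maris2008}. That argument needs $w\in W^{2,p}_{\mathrm{loc}}$ and $\nabla w\in W^{1,p}$, and this regularity comes from ellipticity. But $-\Delta_{x^\perp}w=\mu(\partial_{x_1}^2w+G(|w|^2)w-i\nu\partial_{x_1}w)$ is elliptic only when $\mu>0$: for $\mu<0$ it is hyperbolic, and for $\mu=0$ it gives no control in $x_1$. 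So you are assuming what you want to prove.

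Your fallback does not repair this. $w_{1,\lambda}$ is itself a minimizer at level $\lambda^{N-1}j$, so your $g(\lambda)$ vanishes identically. Differentiating it only reproduces $T_{\min}'(j)=\frac{N-3}{N-1}T_{\min}(j)/j$, which follows from Lemma \ref{cmlem4.7} and says nothing about $\mu$. The conclusion ``hence $\mu=T_{\min}'(j)$'' presupposes the dilation identity you were trying to avoid.

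The same objection applies to your proof that $J'(w)\neq0$. The dilation is not a compactly supported variation, and its generator $-x^\perp\cdot\nabla_{x^\perp}w$ is not known to lie in $H^1$. So $\frac{d}{d\lambda}\big|_{\lambda=1}J(w_{1,\lambda})=(N-1)j$ cannot be transferred to $J'(w)\phi$ for some $\phi\in C^\infty_c$. The paper invokes Lemma 6.4 of \cite{Maris2013} at this point.

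\textbf{How to fix it.} Your $g$ works if you evaluate it along compactly supported perturbations instead of dilations. Take $\eta\in H^1(\mathbb{R}^N)$ with compact support and set $g(t)=T(w+t\eta)-T_{\min}(J(w+t\eta))$. Then $g(t)\geq0$ for small $|t|$, because $J(w+t\eta)\to j>0$, and $g(0)=0$. Moreover $T_{\min}(s)=s^{\frac{N-3}{N-1}}T_{\min}(1)$ is differentiable at $s=j$. Hence $g'(0)=0$, that is, $T'(w)\eta=\frac{N-3}{N-1}\frac{T(w)}{j}J'(w)\eta$.

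This gives the Euler--Lagrange equation with the explicit multiplier $\mu=\frac{N-3}{N-1}T(w)/j>0$ in one step. You no longer need $J'(w)\neq0$, the implicit function theorem, or any Pohozaev identity. The rest of your argument then goes through: rescaling with $\lambda_0=\sqrt{\mu}$, computing $\lambda_0$ from $T(w)=C_0S_0^{\frac{2}{N-1}}j^{\frac{N-3}{N-1}}$, and regularity via \cite{Maris2013}.

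The paper instead proves $\mu>0$ variationally, before any Pohozaev identity. If $\mu<0$, moving to $w+x\xi$ would give $J>j$ and $T<T(w)$, contradicting Lemma \ref{annlem4.8}. If $\mu=0$, then $\Delta_{x^\perp}w=0$, and testing with $\zeta(x/n)w$ forces $T(w)=0$. Only after rescaling to the elliptic equation \eqref{ann1.4} does the paper use the Pohozaev identity, and only to compute $\lambda_0$.
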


{\it Proof. } 
For any $R>0$, the functionals
$\tilde{J}(\varphi)=J(w+\varphi)$ and $\tilde{T}(\varphi)=T(w+\varphi)$ are $C^{1}$ on the space $H_0^{1}(B(0, R))$.
Setting
$$
\begin{aligned}
& \tilde{J}^{\prime}(\varphi) \cdot v=
 2\int_{\mathbb{R}^N} \left\langle\frac{\partial^2(w+\varphi)}{\partial x_1^2}+G\left(|w+\varphi|^2\right)(w+\varphi)-\nu i(w+\varphi)_{x_1}, v\right\rangle dx, \\
& \tilde{T}^{\prime}(\varphi) \cdot v=-2\int_{\mathbb{R}^N}\left\langle\sum_{i=2}^N \frac{\partial^2(w+\varphi)}{\partial x_i^2}, v\right\rangle dx.
\end{aligned}
$$

Lemma 6.4 in (\cite{Maris2013}, p. 173) shows that if $\tilde{J}^{\prime}(0) \cdot \xi=0$ for all $\xi \in C_c^{1}(\mathbb{R}^N)$,
then it holds that $w$ is a constant of modulus 1 a.e. in $\mathbb{R}^N$. Using this fact, we see that there exists $\xi \in C_c^{1}(\mathbb{R}^N)$ such that $\tilde{J}^{\prime}(0) \cdot \xi \neq 0$.

We then show the existence of a Lagrange multiplier
$$
\lambda=\frac{\tilde{T}^{\prime}(0) \cdot \xi}{\tilde{J}^{\prime}(0) \cdot \xi}.
$$
Let $\eta \in H^{1}(\mathbb{R}^N)$ and $\eta$ has compact support satisfying $\tilde{J}^{\prime}(0) \cdot \eta=0$. Setting
$$
h\left(x_1, x_2\right)=J\left(w+x_1 \eta+x_2 \xi\right)-j=\tilde{J}\left(x_1 \eta+x_2 \xi\right)-j,
$$
where $x_1, x_2 \in \mathbb{R}$, such that
$$
\begin{aligned}
& h(0,0)=0, \\
& \frac{\partial h}{\partial x_1}(0,0)=\tilde{J}^{\prime}(0) \cdot \eta=0, \\
& \frac{\partial h}{\partial x_2}(0,0)=\tilde{J}^{\prime}(0) \cdot \xi \neq 0.
\end{aligned}
$$
Using the implicit function theorem, we deduce that there exist $\varepsilon>0$ and $g \in C^{1}:(-\varepsilon, \varepsilon) \rightarrow \mathbb{R}$, such that
$$
\begin{aligned}
& g(0)=0, \quad g^{\prime}(0)=0, \\
& J\left(w+x_1 \eta+g\left(x_1\right) \xi\right)=J(w)=j \quad
 \text { for }-\varepsilon<x_1<\varepsilon .
\end{aligned}
$$
Since $w$ is a minimizer of $T$ in
$
\{w \in \mathcal{E} ~\big|~ J(w)=j>0\},
$
the function $x_1 \mapsto T(w+x_1 \eta+g(x_1) \xi)$
reaches a minimum at $x_1=0$.
Differentiating $T(w+x_1 \eta+g(x_1) \xi)$, we have
$$
\left.\frac{d T}{d x_1}\right|_{x_1=0}=T^{\prime}(w) \cdot \eta=0.
$$
Therefore, we see that $T^{\prime}(w) \cdot \eta=0$ for any $\eta \in H^{1}(\mathbb{R}^N)$ and $\eta$ has compact support satisfying $\tilde{J}^{\prime}(0) \cdot \eta=0$.
Setting
$$
\lambda=\frac{T^{\prime}(w) \cdot \xi}{\tilde{J}^{\prime}(0) \cdot \xi},
$$
we infer that
\begin{equation}
\label{ann5.64}
\begin{aligned}
T^{\prime}(w) \cdot \eta-\lambda J^{\prime}(w) \cdot \eta=0
\end{aligned}
\end{equation}
for any $\eta \in H^{1}(\mathbb{R}^N)$ and $\eta$ has compact support.

\medskip

We will show that $\lambda>0$. Assume conversely that $\lambda<0$. Suppose
that
$$
J^{\prime}(w) \cdot \xi>0.
$$
Using equation \eqref{ann5.64} we see that
$$
T^{\prime}(w) \cdot \xi<0.
$$
Since
$$
T^{\prime}(w) \cdot \xi=\lim _{x \rightarrow 0} \frac{T(w+x \xi)-T(w)}{x},
$$
we deduce that for $x>0$ which is small enough, we have $w+x \xi$ is not a constant and
$$
T(w+x \xi)<T(w)=C_0 S_0^{\frac{2}{N-1}} j^{\frac{N-3}{N-1}}.
$$
Since
$$
J^{\prime}(w) \cdot \xi=\lim _{x \rightarrow 0} \frac{J(w+x \xi)-J(w)}{x}>0,
$$
for $x>0$ small enough, $w+x \xi$ is the same as before, we have
$$
J(w+x \xi)>J(w)=j.
$$
$J(w+x \xi)>j$ and $T(w+x \xi)<C_0 S _0^{\frac{2}{N-1}} j^{\frac{N-3}{N-1}}$ are in contradiction with Lemma \ref{annlem4.8}. Hence, $\lambda \geq 0$.

Suppose $\lambda=0$. From \eqref{ann5.64} we obtain
\begin{equation}
\label{ann5.65}
\begin{aligned}
\int_{\mathbb{R}^N} \sum_{i=2}^N<\frac{\partial w}{\partial x_i}, \frac{\partial \eta}{\partial x_i}>d x=0,
\end{aligned}
\end{equation}
for all $\eta \in H^{1}(\mathbb{R}^N)$ and $\eta$ has compact support. Let $\zeta \in C_c^{\infty}(\mathbb{R}^N)$, $\zeta=1$ on $B(0,1)$ and $supp~ \zeta=B(0,2)$. Let
$$
\eta_n=\zeta\left(\frac{x}{n}\right) w,
$$
we have
$$
\nabla \eta_n=\frac{1}{n} \nabla \zeta\left(\frac{x}{n}\right) w+\zeta\left(\frac{x}{n}\right) \nabla w .
$$
We have
$$
\begin{aligned}
& \zeta\left(\frac{x}{n}\right) \nabla w \rightarrow \nabla w \quad \text { in } L^2\left(\mathbb{R}^N\right), \\
& \frac{1}{n} \nabla \zeta\left(\frac{x}{n}\right) w \rightharpoonup 0 \quad \text { in } L^2\left(\mathbb{R}^N\right) .
\end{aligned}
$$
In equation \eqref{ann5.65}, replacing $\eta$ by $\eta_n$ and taking the limit as $n$ goes to $\infty$, we obtain
$$
T(w)=\int_{\mathbb{R}^N} \sum_{i=2}^N\left|\frac{\partial w}{\partial x_i}\right|^2 d x=0.
$$
However, according to the assumption
$w$ is a minimizer of $T$ in 
$$
\{w \in \mathcal{E} ~\big|~ J(w)=j\},
$$
thus,
$$
T(w)=C_0 S_0^{\frac{2}{N-1}} j^{\frac{N-3}{N-1}},
$$
this is a contradiction.
Hence, $\lambda=0$ is impossible.
Thus, necessarily $\lambda>0$.

\medskip

\eqref{ann5.64} implies that
\begin{equation}
\label{ann5.66}
\begin{aligned}
\frac{1}{\lambda} \sum_{i=2}^N \frac{\partial^2 w}{\partial x_i^2}+\frac{\partial^2 w}{\partial x_1^2}-i \nu \frac{\partial w}{\partial x_1}+G\left(|w|^2\right) w=0
\end{aligned}
\end{equation}
in $D^{\prime}(\mathbb{R}^N)$.
Let $\lambda_0=\lambda^{\frac{1}{2}}$. Then $w_{1, \lambda_0}$ satisfies \eqref{ann1.4}. Then Lemma 5.5 in \cite{Maris2013} holds for $w_{1, \lambda_0}$. Therefore, Proposition 4.1 in
(\cite{Maris2008}, pp. 1091-1092) can be applied to $w_{1, \lambda_0}$, and $w_{1, \lambda_0}$ satisfies
$$
\frac{N-3}{N-1} T\left(w_{1, \lambda_0}\right)-J\left(w_{1, \lambda_0}\right)=0,
$$
which is a Pohozaev identity.
We then have
$$
\frac{N-3}{N-1} \lambda_0^{N-3} T(w)-\lambda_0^{N-1} J(w)=0,
$$
from which we obtain
$$
\frac{N-3}{N-1} \lambda_0^{-2} T(w)-J(w)=0.
$$
Since $w$ minimizes $T$ in
$
\{w \in \mathcal{E} ~\big|~ J(w)=j\},
$
we have
$$
\frac{N-3}{N-1} \lambda_0^{-2} C_0 S_0^{\frac{2}{N-1}} j^{\frac{N-3}{N-1}}-j=0,
$$
from which we obtain
$$
\begin{aligned}
\lambda_0 & =\left(\frac{N-3}{N-1} C_0\right)^{\frac{1}{2}} S_0^{\frac{1}{N-1}} j^{-\frac{1}{N-1}} \\
& =\left(\frac{N-3}{2}\right)^{\frac{1}{N-1}} S_0^{\frac{1}{N-1}} j^{-\frac{1}{N-1}},
\end{aligned}
$$
where we have used $C_0$ in the Lemma \ref{bbmlem3.2}.

Using Lemma 5.5 in \cite{Maris2013}, we obtain the regularity statement of $w_{1, \lambda_0}$ (thus $w$) in this proposition.
\hfill
$\Box $

\begin{remark} 
\label{lionremII.5}
In this remark, we aim to show that the solution obtained in Proposition \ref{annprop5.6} is a least action solution.

Let $w$ and $\widetilde{w}=w_{1, \lambda_0}$ be provided by the Proposition \ref{annprop5.6} and $\widetilde{w}$ solves \eqref{ann1.4}. Using the Pohozaev identity (see \cite{Maris2008}, Proposition 4.1 in p. 1091), we have
\begin{equation}
\label{lion1}
\begin{aligned}
\frac{N-3}{N-1} T(\widetilde{w})-J(\widetilde{w})=0.
\end{aligned}
\end{equation}
\eqref{lion1} implies
\begin{equation}
\label{lion2}
\begin{aligned}
\frac{N-3}{N-1} \lambda_0^{N-3} T(w)-\lambda_0^{N-1} J(w)=0.
\end{aligned}
\end{equation}
Since $J(w)=j$, then \eqref{lion2} implies
$$
\begin{aligned}
& \frac{N-3}{N-1} T(w)-\lambda_0^2 j=0, \\
& \lambda_0=\left(\frac{1}{j} \frac{N-3}{N-1} T(w)\right)^{\frac{1}{2}}.
\end{aligned}
$$
Using \eqref{lion1}, we have
\begin{equation}
\label{lion3}
\begin{aligned}
S(\widetilde{w})&=T(\widetilde{w})-J(\widetilde{w})=\frac{2}{N-1} T(\widetilde{w})=\frac{2}{N-1} \lambda_0^{N-3} T(w) \\
& =\frac{2}{N-1}\left(\frac{1}{j}\right)^{\frac{1}{2}(N-3)}\left(\frac{N-3}{N-1}\right)^{\frac{N-3}{2}}(T(w))^{\frac{N-3}{2}} T(w) \\
& =\frac{2}{N-1} j^{-\frac{N-3}{2}}\left(\frac{N-3}{N-1}\right)^{\frac{N-3}{2}}(T(w))^{\frac{N-1}{2}}.
\end{aligned}
\end{equation}

Let $f$ be another solution of \eqref{ann1.4}.
Suppose $f_{1,\sigma}$ satisfies $J(f_{1,\sigma})=j$, then
\begin{equation}
\label{lion4}
\begin{aligned}
j=J(f_{1, \sigma})=\sigma^{N-1} J(f) .
\end{aligned}
\end{equation}
Also, \eqref{lion1} implies
\begin{equation}
\label{lion5}
\begin{aligned}
\frac{N-3}{N-1} T(f)-J(f)=0.
\end{aligned}
\end{equation}
From \eqref{lion4} and \eqref{lion5}, 
$$
\sigma=j^{\frac{1}{N-1}}\left(\frac{N-1}{N-3}\right)^{\frac{1}{N-1}}(T(f))^{-\frac{1}{N-1}}.
$$
Since $T(f_{1,\sigma})=\sigma^{N-3} T(f)$, we have
$$
\begin{aligned}
T\left(f_{1,\sigma}\right) & =\sigma^{N-3} T(f) \\
& =j^{\frac{N-3}{N-1}}\left(\frac{N-1}{N-3}\right)^{\frac{N-3}{N-1}}(T(f))^{\frac{2}{N-1}},
\end{aligned}
$$
then we have
$$
T(f)=j^{-\frac{N-3}{2}}\left(\frac{N-1}{N-3}\right)^{-\frac{N-3}{2}}(T(f_{1, \sigma}))^{\frac{N-1}{2}}.
$$
Using \eqref{lion5},
\begin{equation}
\label{lion6}
\begin{aligned}
S(f) & =T(f)-J(f)=T(f)-\frac{N-3}{N-1} T(f) 
 =\frac{2}{N-1} T(f) \\
& =\frac{2}{N-1} j^{-\frac{N-3}{2}}\left(\frac{N-1}{N-3}\right)^{-\frac{N-3}{2}}(T(f_{1, \sigma}))^{\frac{N-1}{2}}.
\end{aligned}
\end{equation}
Since $w$ is obtained from the minimization problem and $J(f_{1,\sigma})=J(w)=j$, then
\begin{equation}
\label{lion7}
\begin{aligned}
T\left(f_{1, \sigma}\right) \geq T(w).
\end{aligned}
\end{equation}
\eqref{lion3}, \eqref{lion6} and \eqref{lion7} imply
$$
S(f) \geq S(\widetilde{w}).
$$
Therefore, we have proved that the solution of \eqref{ann1.4} obtained using the minimization procedure has the least action among all solutions of \eqref{ann1.4}. The solution $\widetilde{w}=w_{1, \lambda_0}$ is called a ground state solution.

\end{remark}

\section{Dimension N=3}
\label{annsec6}

In this section, we will compare the cases $N=3$ and $N \geq 4$.

The minimization problem in dimension 3 is
$$
T_{\min }=\inf \{T(w) ~\big|~ w \in \mathcal{E}, ~w \text{ is not a constant}, ~J(w)=0\}.
$$
Define
$$
H(w)=\int_{\mathbb{R}^3}\left|\frac{\partial w}{\partial x_1}\right|^2+\frac{1}{2}\left(\Theta^2(|w|)-1\right)^2 d x.
$$
Using Lemma \ref{annlem4.7} we see that
$$
S_0=\inf \{T(w) ~\big|~ w \in \mathcal{E}, w \text{ is not a constant}, J(w)=0\},
$$
this is because in dimension 3,
$\operatorname{Poh}(w)=J(w)$.

Theorem 6.2 and Proposition 6.5 in \cite{Maris2013} say:

\begin{Theorem} (\cite{Maris2013})
\label{annthm6.2}
Suppose that $N=3$, (Ass1) and (Ass2) hold.
Let $\{w_n\}_{n \geq 1} \subset \mathcal{E}$ be such that
\begin{equation}
\label{ann6.5}
\begin{aligned}
 \lim _{n \rightarrow \infty} H\left(w_n\right)=1, \quad \lim _{n \rightarrow \infty} J\left(w_n\right)=0, \quad
 \lim _{n \rightarrow \infty} T\left(w_n\right)=S_0 .
\end{aligned}
\end{equation}

(i) Then, up to a subsequence and translation, there exists $w \in \mathcal{E}$, $w$ is not a constant and $J(w)=0$ such that
$$
\nabla w_n \rightarrow \nabla w,  \quad
\left|w_n\right|-1 \rightarrow |w|-1 \quad \text{ in } L^2\left(\mathbb{R}^3\right).
$$
We have $T(w)=S_0$ and $w$ is a minimizer of $T$.

\medskip

(ii) There exists $\lambda_1>0$ such that $w_{1, \lambda_1}$ is a solution of \eqref{ann1.4}.

\end{Theorem}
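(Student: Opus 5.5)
Part (i) follows the same concentration--compactness scheme as the proof of Theorem \ref{annthm5.3}, now adapted to the three-dimensional scaling. In that scaling $T(w_{1,\lambda})=T(w)$ and $J(w_{1,\lambda})=\lambda^2 J(w)$, so $H(w_{1,\lambda})=\lambda^2 H(w)$. This is why the normalization $H(w_n)\to1$ is imposed: it prevents the sequence from degenerating along the dilation orbit.

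First, $E_{\operatorname{Mod}}(w_n)=T(w_n)+H(w_n)$ is bounded by hypothesis. Up to a subsequence, $E_{\operatorname{Mod}}(w_n)\to\beta=S_0+1$, and I apply the concentration function $e_n(t)$ of \eqref{ann5.9}.

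\emph{Vanishing.} Suppose $\sup_x E^{B(x,1)}_{\operatorname{Mod}}(w_n)\to0$. By Lemma \ref{bbm2024lem1.2}, $\liminf\,(T(w_n)-J(w_n))\ge0$. This gives no contradiction by itself, since $T(w_n)-J(w_n)\to S_0>0$. I would instead follow Step 2 of Lemma 5.4 in \cite{Maris2013}. Lemma \ref{bbm2024lem1.2} also shows that $H(w_n)-\int|\partial_1w_n|^2+V(|w_n|^2)\to0$ and $J(w_n)+H(w_n)+\nu P(w_n)\to0$. Since $H(w_n)\to1$, this forces $\nu P(w_n)\to-1$. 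The regularization $u_n$ from Lemma \ref{annlem3.2} has $\||u_n|-1\|_\infty\to0$. Lemma \ref{annlem4.2} then gives $|\nu P(u_n)|\le \frac{\nu}{\sqrt2(1-o(1))}H(u_n)$. Transferring this back through \eqref{ann3.1}--\eqref{ann3.4} yields $1=\lim|\nu P(w_n)|\le\frac{\nu}{\sqrt2}\lim H(w_n)=\frac{\nu}{\sqrt2}<1$, which is a contradiction.

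\emph{Dichotomy.} Suppose $0<\beta_1<\beta$. Lemma \ref{annlem3.3} splits $w_n$ into $w_{n,1},w_{n,2}$ with $T$, $J$ and $H$ asymptotically additive. Let $J(w_{n,i})\to j_i$ with $j_1+j_2=0$. If $j_1>0$, then Lemma \ref{bbmlem3.2}'s $N=3$ analogue (the definition of $S_0$ combined with dilation) gives $\lim T(w_{n,1})\ge S_0$. This is because, for $J>0$, one can modify the function to reach $J=0$ without increasing $T$; I would do this by the argument in Maris' Lemma 6.1, lowering $J$ via a dilation in $x^\perp$ combined with continuity. Then $T(w_{n,2})\to0$. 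But a nonconstant piece with $T\to0$ must have $J$ strictly negative in the limit, by Lemma \ref{bbm2024lem1.2} applied after $x^\perp$-dilation. So both pieces would have to carry $T\ge S_0$ whenever $j_i\ge0$. The delicate case is $j_1=j_2=0$ with both pieces nontrivial, which gives $T\ge 2S_0>S_0$. This case analysis is the main obstacle, and I expect to reuse Maris' Lemma 6.3 on the behaviour of functions with $J<0$ and small $T$.

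\emph{Compactness.} With $\beta_1=\beta$, I conclude exactly as at the end of Theorem \ref{annthm5.3}. Weak limits, together with Lemmas 4.11 and 4.12 of \cite{CM2017}, give convergence of $\||w_n|-|w|\|_{L^2}$, of $\int V$, and of $P$. Hence $J(w)\ge0$ and $T(w)\le S_0$. Then $J(w)=0$ and $T(w)=S_0$ follow by minimality: if $J(w)>0$, dilating in $x^\perp$ would lower $T$ below $S_0$. Norm convergence of $\nabla w_n$ follows from equality of the norms. Finally, $w$ is nonconstant because $H(w)=\lim H(w_n)=1$.

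For part (ii), I repeat the proof of Proposition \ref{annprop5.6}. The Lagrange multiplier rule yields $T'(w)=\lambda J'(w)$. I show $\lambda>0$ by the same perturbation argument, using that $J>0$ together with $T<S_0$ is impossible. Then $\lambda_1=\lambda^{1/2}$ gives a solution $w_{1,\lambda_1}$ of \eqref{ann1.4}.
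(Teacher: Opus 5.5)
The paper does not prove this statement; it only quotes it from \cite{Maris2013}. Taken on its own, your sketch has a genuine gap exactly where you flag the ``main obstacle'', and two other steps fail as written.

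\emph{Dichotomy.} To close the case $j_1=j_2=0$ you need a statement of the following kind: if $E_{\operatorname{Mod}}(u_n)$ is bounded, $\liminf H(u_n)>0$ and $J(u_n)\to0$ (possibly from below), then $\liminf T(u_n)\ge S_0$. This is the heart of the $N=3$ case, and no dilation gives it when $J(u_n)<0$:
\begin{itemize}
\item $u\mapsto u_{1,\mu}$ fixes $T$ and multiplies $J$ by $\mu^2$, so it never changes the sign of $J$;
\item $u\mapsto u_{\lambda,1}$ changes $J$ at first order by $(\lambda-1)(\int|\partial_{x_1}u_n|^2-\int V(|u_n|^2))$, which may tend to $0$.
\end{itemize}
Deferring this to a lemma you neither state nor prove leaves (i) unproved.

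Your case $j_1>0$ does not close either. Having $T(w_{n,2})\to0$ together with $j_2=-j_1<0$ is not a contradiction; what you need is the \emph{strict} bound $\liminf T(w_{n,1})>S_0$.

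The implication ``$J>0\Rightarrow T>S_0$'' also comes from the wrong dilation. You use it here, in the compactness step, and for $\lambda>0$ in (ii), but it cannot come from an $x^\perp$-dilation, since you yourself note $T(w_{1,\lambda})=T(w)$. It comes from $u_{\lambda,1}$ with $\lambda<1$:
\begin{itemize}
\item $T(u_{\lambda,1})=\lambda T(u)$;
\item $J(u_{\lambda,1})=J(u)-(\lambda^{-1}-1)\int|\partial_{x_1}u|^2+(1-\lambda)\int V(|u|^2)$, which tends to $-\infty$ as $\lambda\to0$.
\end{itemize}
Bounded energy together with $J(u)\ge j_1/2$ puts the zero of $\lambda\mapsto J(u_{\lambda,1})$ below $1-\delta$, which gives $T(u)\ge S_0/(1-\delta)$ and settles $j_1>0$.

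One way to handle the remaining case is an auxiliary problem. Let $T^*$ be the least $\liminf T(u_n)$ over bounded sequences with $H(u_n)\to1$ and $J(u_n)\to0$.
\begin{itemize}
\item $0<T^*\le S_0$. For positivity, rescale in $x^\perp$ to a fixed small $E_{\operatorname{Mod}}=j$ and use $S_{\min}(j)>0$.
\item Run concentration-compactness on a minimizing sequence for $T^*$. The cases $j_i>0$ are excluded as above.
\item If $j_1=j_2=0$ and both $H$-limits are positive, renormalize each piece's $H$ by an $x^\perp$-dilation to get $T^*\ge2T^*$.
\item If one piece has $H\to0$, the other piece is admissible with $T$-limit strictly below $T^*$, again a contradiction.
\item Compactness then yields a nonconstant limit with $J\ge0$, so $T^*=S_0$, and your $(w_n)$ is a minimizing sequence for $T^*$.
\end{itemize}

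\emph{Vanishing.} Passing back through \eqref{ann3.1}--\eqref{ann3.4} does not give $|\nu P(w_n)|\le\frac{\nu}{\sqrt2}H(w_n)+o(1)$. Estimate \eqref{ann3.1} controls only the full $E_{\operatorname{Mod}}(u_n)$, not $\int|\partial_{x_1}u_n|^2$ separately. So you only obtain $1\le\frac{\nu}{\sqrt2}(S_0+1)$, which is no contradiction.

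The fix is to apply the argument to $(w_n)_{1,\mu}$ with $\mu\ge1$ fixed. Vanishing is preserved, $T$ is unchanged, and $H$ and $P$ are multiplied by $\mu^2$. This yields $\mu^2\le\frac{\nu}{\sqrt2}(S_0+\mu^2)+o(1)$, which is false once $\mu^2>\nu S_0/(\sqrt2-\nu)$.

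With these repairs, the compactness step and part (ii) go through as you describe. In the compactness step, check that the limit is nonconstant before invoking minimality: a constant limit would force $H(w_n)\to0$.
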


\begin{remark}

Unlike in Proposition \ref{annprop5.6}, we cannot determine the value of $\lambda_1$ in Theorem \ref{annthm6.2}.

Similarly to Remark \ref{lionremII.5}, we can show that the solution $w_{1, \lambda_1}$ provided by Theorem \ref{annthm6.2} is a minimum action solution of \eqref{ann1.4}.

\end{remark}

\section{Symmetry of the solutions}
\label{annsec7}

We study the symmetry of the solutions constructed by the minimization procedure. We first recall a theorem in \cite{Maris2009} that will be used in the symmetry analysis.

We focus on the problem (Prob1) of minimizing
$$
\begin{gathered}
E(w)=\int_{\mathbb{R}^{1}} \int_{\mathbb{R}^{N-1}} 
e\left(w\left(x_1, x^{\perp}\right), \frac{\partial w}{\partial x_1}\left(x_1, x^{\perp}\right),\left|\nabla_{x^{\perp}} w\left(x_1, x^{\perp}\right)\right|\right) d x_1 d x^{\perp}
\end{gathered}
$$
under the constraint
$$
\begin{aligned}
& G(w)=\int_{\mathbb{R}^{1}} \int_{\mathbb{R}^{N-1}} 
 g\left(w\left(x_1, x^{\perp}\right), \frac{\partial w}{\partial x_1}\left(x_1, x^{\perp}\right),\left|\nabla_{x^{\perp}} w\left(x_1, x^{\perp}\right)\right|\right) d x_1 d x^{\perp},
\end{aligned}
$$
 where $x^{\perp} \in \mathbb{R}^{N-1}, x_1 \in \mathbb{R}^{1}$. Suppose that problem (Prob1) has a minimizer in a functional space $\mathcal{E}$.

Let $H$ be a hyperplane in $\mathbb{R}^N$. For $x \in \mathbb{R}^N$, $s_H(x)$ represents the symmetric point of $x$ with respect to $H$. Let $H^{+}$ and $H^{-}$ be the two half-spaces split by $H$. For any $w \in \mathcal{E}$, we define
$$
\begin{aligned}
& w_{H^{+}}(x)= \begin{cases}w(x) & x \in H^{+} \cup H, \\
w\left(s_H(x)\right) & x \in H^{-},\end{cases} \\
& w_{H^{-}}(x)= \begin{cases}w(x) & x \in H^{-} \cup H, \\
w\left(s_H(x)\right) & x \in H^{+}.\end{cases}
\end{aligned}
$$
We assume two assumptions:

\medskip

(Assu1). For any $w \in \mathcal{E}$ and any hyperplane $H$ in $\mathbb{R}^{N-1}\left(x^{\perp} \in \mathbb{R}^{N-1}\right)$ passing the origin $O$, 
$w_{(H \times \mathbb{R}^{1})^{+}}, w_{(H \times \mathbb{R}^{1})^{-}} \in \mathcal{E}\left(x_1 \in \mathbb{R}^{1}\right)$. 

(Assu2). For any minimizer $w \in \mathcal{E}$ and $x_1 \in \mathbb{R}^{1}$, $x^{\perp} \mapsto w\left(x_1, x^{\perp}\right)$ is $C^{1}$ on $\mathbb{R}^{N-1}$.

\medskip

Then Theorem $2^{\prime}$ in (\cite{Maris2009}, p. 329) shows:

\begin{Theorem}
\label{mthm2}
Suppose $N \geq 3$, (Assu1) and (Assu2) are satisfied.
If $w$ is a minimizer of (Prob1), then $w$ is axially symmetric with respect to $\mathbb{R}^{1}\left(x_1 \in \mathbb{R}^{1}\right)$.
\end{Theorem}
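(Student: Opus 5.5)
This is Theorem 2$'$ of \cite{Maris2009}, quoted as a black box, so I would reconstruct Mariş's reflection argument rather than derive it from anything earlier in this paper. The integrands $e$ and $g$ depend on $x^{\perp}$-derivatives only through $|\nabla_{x^{\perp}} w|$, and they contain no explicit $x$-dependence. Hence, for any hyperplane $H\subset\mathbb{R}^{N-1}$ through the origin, with $\Pi = H\times\mathbb{R}^1$, both functionals split additively over the half-spaces $\Pi^+$ and $\Pi^-$. Reflection across $\Pi$ preserves $x_1$ and $|\nabla_{x^{\perp}}w|$, so
\[
E(w_{\Pi^{\pm}})=2E^{\Pi^{\pm}}(w),\qquad G(w_{\Pi^{\pm}})=2G^{\Pi^{\pm}}(w).
\]
By (Assu1), both reflected functions lie in $\mathcal{E}$.

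\textbf{Step 1 (half-space balance).} I first choose $H$ so that the constraint is split evenly, $G^{\Pi^+}(w)=G^{\Pi^-}(w)$. Take $H$ orthogonal to a unit vector $e\in S^{N-2}$ and set
\[
\phi(e)=G^{\Pi_e^+}(w)-G^{\Pi_e^-}(w).
\]
This is continuous and odd, $\phi(-e)=-\phi(e)$. Since $S^{N-2}$ is connected (here $N\ge3$ is used), it has a zero.

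\textbf{Step 2.} For this balanced $H$, both reflections $w_{\Pi^\pm}$ satisfy the constraint, so $E(w_{\Pi^\pm})\ge E(w)$. Adding the two inequalities gives $2E^{\Pi^+}+2E^{\Pi^-}\ge 2E(w)$, which is an equality. Hence each inequality is an equality, and $w_{\Pi^+}$ and $w_{\Pi^-}$ are minimizers as well.

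\textbf{Step 3 (the main obstacle).} I have to pass from "the reflected functions are minimizers" to "$w$ itself is symmetric." Mariş's trick is to use a second direction. Take any hyperplane $H'$ with $H'\ne H$ and $H'\perp H$, so that $H'$ also splits the constraint of $w_{\Pi^+}$ evenly, by symmetry. Repeating Steps 1--2 produces minimizers symmetric with respect to two hyperplanes, and iterating this over $N-1$ orthogonal hyperplanes gives a minimizer symmetric with respect to all of them.

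The more delicate part is showing that $w$ itself is symmetric about every hyperplane containing the $x_1$-axis. For this I would use a unique-continuation-type argument based on (Assu2). Since $w$ and $w_{\Pi^+}$ coincide on the half-space $\Pi^+$ and both are minimizers, each satisfies the Euler--Lagrange equation. Then $C^1$ regularity in $x^{\perp}$ across $\Pi$ forces $\partial_{\nu}w=0$ on $\Pi$, since $w_{\Pi^+}$ has a symmetric, hence vanishing, normal derivative there.

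Running this for every balanced hyperplane, I would show that for every $x_1$ the map $x^{\perp}\mapsto w(x_1,x^{\perp})$ has vanishing angular derivatives. Equivalently, $w$ is symmetric with respect to every hyperplane through a common $(N-2)$-plane family containing the axis, which yields axial symmetry about $\mathbb{R}^1$ after a translation in $x^{\perp}$. The point I expect to be hardest is controlling the family of balanced hyperplanes: I would need that every hyperplane through the origin is balanced, or reduce to that case by first translating $w$ in $x^{\perp}$ so the balanced hyperplanes pass through a common axis. I would argue the latter using that the balanced hyperplane orthogonal to each direction is unique up to translation.
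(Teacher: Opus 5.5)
The paper gives no proof of this statement; it quotes Theorem~2$'$ of Mari\c{s}. Your Steps~1 and~2 are correct and are the standard opening of the reflection method: a balanced hyperplane exists, both reflections are again minimizers, and so by (Assu2) $\partial_\nu w=0$ on that hyperplane. Step~3 carries the whole content of the theorem, and it does not work, for three reasons.

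(a) The claim that a hyperplane $H'\perp H$ is balanced for $w_{\Pi^+}$ ``by symmetry'' is false. When the normals are orthogonal, $s_{\Pi}$ maps each half-space of $\Pi'=H'\times\mathbb{R}^1$ onto itself. Hence $G^{\Pi'^{\pm}}(w_{\Pi^+})=2G^{\Pi^+\cap\Pi'^{\pm}}(w)$, and these two numbers are unrelated. Such a $\Pi'$ must be found by translation, and then it no longer passes through your chosen origin.

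(b) Under (Assu1)--(Assu2) you have no Euler--Lagrange equation and no unique continuation; avoiding them is the point of Mari\c{s}'s result. The one thing (Assu2) does give, $\partial_\nu w=0$ on a balanced hyperplane, is not symmetry about that hyperplane.

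(c) To get vanishing angular derivatives you need \emph{every} hyperplane containing a fixed axis parallel to $x_1$ to be balanced. Step~1 only gives one balanced hyperplane through each point. Uniqueness of the balanced hyperplane in each direction is not available: it can fail if the density of $G$ vanishes on a slab or changes sign. Even if it held, it would not make the balanced hyperplanes for different directions concurrent. For a general density they are not concurrent, and concurrency is essentially the symmetry you are trying to prove.

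What is missing is a way to produce a whole balanced pencil. One mechanism uses only (Assu1)--(Assu2). Suppose a minimizer $v$ is symmetric with respect to two orthogonal hyperplanes $\Pi_1,\Pi_2$ of the form $H\times\mathbb{R}^1$. Then $s_{\Pi_2}\circ s_{\Pi_1}$ is the rotation by $\pi$ about $\Pi_1\cap\Pi_2$ in the plane of the two normals. This rotation preserves the integrand of $G$ and exchanges the two sides of every hyperplane of the pencil through $\Pi_1\cap\Pi_2$. So the whole pencil is balanced for $v$, all these reflections of $v$ are minimizers, and (Assu2) kills the normal derivative on each of them. That is, the angular derivative of $v$ in that plane vanishes, and $v$ is rotation invariant there.

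To apply this, take $\Pi_1$ balanced for $w$, and take $\Pi_2\perp\Pi_1$ balanced for $w_{\Pi_1^+}$, found by translation. The two reflections of $w_{\Pi_1^+}$ across $\Pi_2$ are symmetric about both $\Pi_1$ and $\Pi_2$, precisely because $s_{\Pi_1}$ preserves the sides of $\Pi_2$. Hence both are rotation invariant. They agree on $\Pi_2$, which meets every orbit, so $w_{\Pi_1^+}$ itself is rotation invariant. Repeating with an orthonormal set of second directions and translating within $\Pi_1$ gives full axial symmetry of $w_{\Pi_1^{\pm}}$. You must then still transfer this symmetry to $w$ by comparing $w_{\Pi_1^+}$ and $w_{\Pi_1^-}$ on $\Pi_1$, where they coincide.
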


Now, we state the symmetry result.

\begin{Proposition}
\label{annprop7.1}
Let (Ass1) and (Ass2) hold. When $N \geq 4$, let $w$ be a minimizer of $T$ in
$$
\{w ~\big|~ w \in \mathcal{E}, ~J(w)=j>0\}.
$$
Then up to a translation in $\left(x_2, \cdots, x_N\right)$,
$w$ is axially symmetric with respect to the $x_1-$axis.
\end{Proposition}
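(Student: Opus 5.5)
The plan is to apply Theorem \ref{mthm2} to the problem (Prob1) with $E$ replaced by $T$ and the constraint functional given by $J$. Both functionals have integrands of the required form. The integrand of $T$ is $e(w,\partial_{x_1}w,|\nabla_{x^\perp}w|)=|\nabla_{x^\perp}w|^2$. The integrand of $J$ is $g=-\bigl(|\partial_{x_1}w|^2+V(|w|^2)+\nu\langle i\partial_{x_1}w,w\rangle\bigr)$, which depends only on $w$ and $\partial_{x_1}w$. Here the momentum density must be interpreted through the functional $L$ on $L^1+\mathcal{Z}$, rather than as a pointwise integrand. The key structural fact is that reflections across hyperplanes of the form $H\times\mathbb{R}$, with $H\subset\mathbb{R}^{N-1}$ a hyperplane, act only on $x^\perp$. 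Such reflections commute with $\partial_{x_1}$ and preserve $|\nabla_{x^\perp}w|$. Hence $T$, the kinetic part $\int|\partial_{x_1}w|^2$, and $\int V(|w|^2)$ all split as sums of their restrictions to the two half-spaces, and the same splitting is what is needed for $P$.

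The steps will be as follows. First I translate $w$ in $x^\perp$ so that the hyperplanes through the origin are the relevant ones. Following Mari\c{s}' argument, for each hyperplane $H$ through $0$ in $\mathbb{R}^{N-1}$ I can first choose, by continuity, a parallel hyperplane splitting $J$ in half. The translation is then used to make these hyperplanes pass through a common point.

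Second, I check (Assu1): $w_{(H\times\mathbb{R})^\pm}\in\mathcal{E}$. This holds because reflecting an $H^1_{loc}$ function across a hyperplane and gluing gives an $H^1_{loc}$ function whose gradient is in $L^2$ and for which $|w|-1\in L^2$, with the norms bounded by twice those of $w$ on the half-space.

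Third, I check (Assu2). By Proposition \ref{annprop5.6}, $w\in C^{1,\alpha}(\mathbb{R}^N)$, so $x^\perp\mapsto w(x_1,x^\perp)$ is $C^1$.

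Fourth, I verify the additivity
\[
J(w)=\tfrac12 J(w_{(H\times\mathbb{R})^+})+\tfrac12 J(w_{(H\times\mathbb{R})^-}),
\]
and the analogous identity for $T$.

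With these in hand, Mari\c{s}' argument runs as follows. If $J$ is split equally, then $J(w_{\pm})=j$, so each reflected function is admissible. This gives $T(w_\pm)\geq T_{\min}(j)$. On the other hand $T(w_+)+T(w_-)=2T(w)=2T_{\min}(j)$, so both reflected functions are minimizers. They therefore satisfy the Euler--Lagrange equation \eqref{ann5.66} with some multiplier. The multiplier is the same positive $\lambda$ for all of them, because by Proposition \ref{annprop5.6} and the Pohozaev computation $\lambda_0$ depends only on $j$. Unique continuation, or the $C^1$ matching argument of \cite{Maris2009}, then gives $w=w_\pm$. Symmetry with respect to every hyperplane containing the $x_1$-axis yields axial symmetry. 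In the theorem all of this is packaged, so I would mainly cite Theorem \ref{mthm2} after verifying its hypotheses.

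The main obstacle is the momentum term. The density $\langle i w_{x_1},w\rangle$ is not in $L^1$, so the splitting $P(w)=\frac12P(w_+)+\frac12P(w_-)$ is not immediate, and (Prob1) formally requires an integrand. I would handle this by approximation. Take $w_n\in 1+C_c^\infty$-type approximants (or use the density result from \cite{CM2017}, where $P$ is continuous in $\mathcal{E}$ with respect to the relevant convergence). For such functions $P$ is a genuine integral, and the reflection identity holds pointwise and additively. Passing to the limit, using the continuity of $P$ on $\mathcal{E}$ (Lemma 4.12 in \cite{CM2017}), then gives the identity for $w$. The approximation commutes with reflection if one approximates on each half-space separately. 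A secondary subtlety is that the continuity argument choosing a $J$-bisecting hyperplane also needs continuity of $P$ under translations, which holds by the same lemma.
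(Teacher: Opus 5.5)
Your proposal is essentially the paper's proof. You verify (Assu1), namely that reflections across hyperplanes $H\times\mathbb{R}$ stay in $\mathcal{E}$ and split $T$ and $J$ additively, and (Assu2), namely that every minimizer is $C^{1,\alpha}$ by Proposition \ref{annprop5.6}, and then invoke Theorem \ref{mthm2}. You go further than the paper in actually justifying the splitting of $P$, which the paper only asserts; this can even be done without approximation, by reflecting a decomposition $\langle i w_{x_1},w\rangle=u+\partial_{x_1}\phi$ with $u\in L^1(\mathbb{R}^N)$, $\phi\in\dot H^1(\mathbb{R}^N)$, since the reflection acts only on $x^\perp$ and commutes with $\partial_{x_1}$, giving $P\bigl(w_{(H\times\mathbb{R})^\pm}\bigr)=2\int_{(H\times\mathbb{R})^\pm}u\,dx$.

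Treat your sketch of the inside of Theorem \ref{mthm2} as heuristic only. The concurrency of the $J$-bisecting hyperplanes is not something a translation gives you a priori; it is a consequence of Mari\c{s}' theorem. Likewise, a unique-continuation argument for $w-w_\pm$ would need $z\mapsto G(|z|^2)z$ to be locally Lipschitz, which (Ass1) does not provide, and this is why the route through the $C^1$ hypothesis (Assu2) is used instead.
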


{\it Proof. } 
We want to apply Theorem \ref{mthm2}, so we need to verify (Assu1) and (Assu2).

Let $H_1$ be a hyperplane in $\mathbb{R}^N$ that is parallel to the $x_1-$axis. Fix $u \in \mathcal{E}$. It is clear that
$$
u_{H_1^{+}}(x), u_{H_1^{-}}(x) \in \mathcal{E}.
$$
In addition,
$$
\begin{aligned}
& T\left(u_{H_1^{+}}\right)+T\left(u_{H_1^{-}}\right)=2 T(u), \\
& J\left(u_{H_1^{+}}\right)+J\left(u_{H_1^{-}}\right)=2 J(u).
\end{aligned}
$$
Therefore, the assumption (Assu1) is satisfied.

Let $w$ be a minimizer of $T$. Using Proposition \ref{annprop5.6}, we see that $w \in C^{1}(\mathbb{R}^N)$, therefore, the assumption (Assu2) is verified. 
Then Theorem \ref{mthm2} implies the desired conclusion.
\hfill
$\Box $

\section{Relationship between different families of solitons}
\label{cmsec8}

Under assumptions of $V$ to be non negative, $(Ass1)$ and $(Ass2)$ are satisfied, Theorem 4.9 and Proposition 4.14 in \cite{CM2017} give a family of solitary waves to equation \eqref{ann1.1}. 
These solutions were obtained by minimizing the energy $E$ subject to $P=q<q_0<0$, where $q_0:=\sup \left\{q<0 ~\big|~ E_{\text {min}}(q)<-\sqrt{2} q\right\}$ in our notation. 
We use $\mathscr{Q}$ to denote these solitons. Since the velocities of the solitons obtained using this method are Lagrange multipliers, the velocity $\nu$ may not cover the whole interval $(0, \sqrt{2})$.

For $N \geq 4$, $(Ass1)$ and $(Ass2)$ are satisfied, Theorem 5.3 and Proposition 5.6 in \cite{Maris2013} found a family of finite energy solitons to \eqref{ann1.1}. 
We use $\mathscr{P}$ to represent these solutions. This family of solutions was found by minimizing the functional
$S=E+\nu P$ subject to the Pohozaev constraint $\{w \in \mathcal{E} ~\big|~ w \text{ is not constant}, ~\operatorname{Poh}(w)=0\}$.
A remarkable fact about this approach is that it gives the existence of solitons with velocity $\nu$ covering the whole interval
$(0, \sqrt{2})$, solving an outstanding conjecture of \cite{JR1982, JPR1986}.

Theorem \ref{annthm5.3} and Proposition \ref{annprop5.6} construct solitons that minimize $T$ under positive $J$. Let $\mathscr{J}$ be the family of these solitons. 
Note that our construction also gives solitons with arbitrary velocity belonging to $(0, \sqrt{2})$. Our aim in this section is to reveal the relationship between these three families of solitons.

The following proposition says that $\mathscr{J} \subset \mathscr{P}$ after a scaling.

\begin{Proposition}
\label{cmprop8.3}
Suppose $N \geq 4$, $(Ass1)$ and $(Ass2)$ are satisfied. Then we have

\medskip

(i) $S_0=\sup_{j_1>0}\left(T_{\min }(j_1)-j_1\right)$ and the supremum is attained at
$j_1^*=\frac{1}{2}(N-3)S_0$. In other words, 
$S_0=T_{\min}(j_1^*)-j_1^*$.

\medskip

(ii) Let $w \in \mathcal{E}$ minimizes $T$ under $J=j_1>0$. Then $w_{1, \lambda_0(j_1)}$ minimizes $S$ in the set
$
\{w \in \mathcal{E} ~\big|~ w \text{ is not constant}, ~\operatorname{Poh}(w)=0\},
$
where $\lambda_0(j_1)$ is defined as in Proposition \ref{annprop5.6}.
\end{Proposition}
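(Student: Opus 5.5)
The plan is to first identify $T_{\min}$ explicitly, and then read off both (i) and (ii) from that formula by elementary one-variable calculus and scaling. The claim I will establish first is
$$T_{\min}(j)=C_0S_0^{\frac{2}{N-1}}j^{\frac{N-3}{N-1}}\qquad\text{for all } j>0,$$
that is, the Sobolev-type inequality of Lemma \ref{bbmlem3.2} is sharp.

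The lower bound $T_{\min}(j)\ge C_0S_0^{\frac{2}{N-1}}j^{\frac{N-3}{N-1}}$ is exactly Lemma \ref{bbmlem3.2}. For the upper bound I will not rely on existence of a Pohozaev minimizer from \cite{Maris2013}. Instead I use the definition of $S_0$ directly.
\begin{itemize}
\item Take non-constant $v_n\in\mathcal{E}$ with $\operatorname{Poh}(v_n)=0$ and $S(v_n)\to S_0$.
\item By \eqref{bbm3.3}, $J(v_n)=\frac{N-3}{2}S(v_n)>0$ and $T(v_n)=\frac{N-1}{2}S(v_n)$.
\item Substituting these into the right-hand side of Lemma \ref{bbmlem3.2} gives
$$C_0S(v_n)^{\frac{2}{N-1}}\Big(\tfrac{N-3}{2}S(v_n)\Big)^{\frac{N-3}{N-1}}=\tfrac{N-1}{2}S(v_n),$$
since the powers of $2$ and of $N-3$ cancel. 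This is the routine computation to check.
\item Hence $T(v_n)/J(v_n)^{\frac{N-3}{N-1}}=C_0S(v_n)^{\frac{2}{N-1}}\to C_0S_0^{\frac{2}{N-1}}$.
\item Rescale: $(v_n)_{1,\sigma_n}$ with $\sigma_n^{N-1}J(v_n)=j$ has $J=j$. The ratio $T/J^{\frac{N-3}{N-1}}$ is invariant under $w\mapsto w_{1,\sigma}$, because $T$ scales like $\sigma^{N-3}$ and $J$ like $\sigma^{N-1}$.
\item Therefore $T_{\min}(j)\le C_0S_0^{\frac{2}{N-1}}j^{\frac{N-3}{N-1}}$, and the formula holds.
\end{itemize}

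For (i), set $f(j_1)=T_{\min}(j_1)-j_1=a\,j_1^{\theta}-j_1$ with $\theta=\frac{N-3}{N-1}\in(0,1)$ and $a=C_0S_0^{\frac{2}{N-1}}$.
\begin{itemize}
\item $f$ is strictly concave on $(0,\infty)$, so its unique critical point is the global maximum.
\item At $j_1^*=\frac{N-3}{2}S_0$ the previous computation gives $T_{\min}(j_1^*)=\frac{N-1}{2}S_0$.
\item Then $f'(j_1^*)=\theta\,T_{\min}(j_1^*)/j_1^*-1=\frac{N-3}{N-1}\cdot\frac{N-1}{N-3}-1=0$.
\item Also $f(j_1^*)=\frac{N-1}{2}S_0-\frac{N-3}{2}S_0=S_0$.
\end{itemize}
This proves $S_0=\sup_{j_1>0}f(j_1)=f(j_1^*)$.

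For (ii), let $w$ minimize $T$ under $J=j_1$, so $T(w)=T_{\min}(j_1)>0$.
\begin{itemize}
\item By Proposition \ref{annprop5.6}, $\widetilde w=w_{1,\lambda_0(j_1)}$ solves \eqref{ann1.4}. By the Pohozaev identity (as used in that proof), $\operatorname{Poh}(\widetilde w)=0$.
\item $\widetilde w$ is not constant, since $T(\widetilde w)=\lambda_0^{N-3}T(w)>0$.
\item By \eqref{bbm3.3}, $S(\widetilde w)=\frac{2}{N-1}\lambda_0^{N-3}T_{\min}(j_1)$.
\item Insert $\lambda_0=\big(\frac{N-3}{2}\big)^{\frac{1}{N-1}}S_0^{\frac{1}{N-1}}j_1^{-\frac{1}{N-1}}$ and the explicit formula for $T_{\min}$. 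The powers of $j_1$ cancel, and the constants reduce via the definition of $C_0$ to give $S(\widetilde w)=S_0$.
\item An equivalent route avoiding the algebra: $\widetilde w=w_{1,\lambda_0}$ has the same value of $T/J^{\frac{N-3}{N-1}}$ as $w$, so $J(\widetilde w)=\frac{N-3}{2}S(\widetilde w)$ and $T(\widetilde w)=\frac{N-1}{2}S(\widetilde w)$ force $S(\widetilde w)=S_0$ by the same identity as in the first step.
\end{itemize}
Since $S_0$ is by definition the infimum over this Pohozaev set, $\widetilde w$ is a minimizer.

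The main obstacle is the sharpness step for $T_{\min}$. I expect it to go through cleanly using only the definition of $S_0$ as an infimum together with scale invariance; everything else is bookkeeping of exponents.
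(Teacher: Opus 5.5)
Your proof is correct, and its last two stages match the paper's. The paper also locates the critical point $j_1^*=\frac12(N-3)S_0$ of $f$, though via the sign of $f'$ rather than concavity. In (ii) it likewise gets $\operatorname{Poh}(w_{1,\lambda_0})=0$ from Proposition \ref{annprop5.6} and evaluates $S(w_{1,\lambda_0})=\lambda_0^{N-3}T_{\min}(j_1)-\lambda_0^{N-1}j_1=T_{\min}(j_1^*)-j_1^*=S_0$.

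Where you differ is in how you obtain the formula $T_{\min}(j)=C_0S_0^{\frac{2}{N-1}}j^{\frac{N-3}{N-1}}$. The paper first proves $S_0\ge\sup_{j_1>0}(T_{\min}(j_1)-j_1)$: for $w$ in the Pohozaev set, $\lambda\mapsto S(w_{1,\lambda})$ peaks at $\lambda=1$, and rescaling to $J=j_1$ gives $S(w)\ge T_{\min}(j_1)-j_1$. It then quotes the formula ``from Theorem \ref{annthm5.3}''. But that theorem takes $T(w_n)\to C_0S_0^{\frac{2}{N-1}}j^{\frac{N-3}{N-1}}$ as a hypothesis, so it presupposes the sharpness of Lemma \ref{bbmlem3.2} rather than proving it.

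Your first step supplies exactly this missing upper bound, by rescaling a minimizing sequence for $S_0$ and using the invariance of $T/J^{\frac{N-3}{N-1}}$. This makes your proof self-contained. Once the formula is known, the paper's separate inequality is redundant, since $\sup f=f(j_1^*)=S_0$ is pure calculus. The two ideas are in fact equivalent: inserting the homogeneity $T_{\min}(j)=j^{\frac{N-3}{N-1}}T_{\min}(1)$ of Lemma \ref{cmlem4.7} into the paper's inequality and maximizing $aj^{\frac{N-3}{N-1}}-j$ over $j$ also yields $T_{\min}(1)\le C_0S_0^{\frac{2}{N-1}}$. The paper simply never draws that conclusion.

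Two minor points:
\begin{itemize}
\item A minimizing sequence for $S_0$ requires the Pohozaev set to be nonempty. This follows by rescaling any $w$ from Lemma \ref{cmthm8.1_1} by $\lambda_w$ as in Lemma \ref{bbmlem3.2}.
\item In (ii), neither Proposition \ref{annprop5.6} nor the Pohozaev identity for solutions is needed. With $T(w)=T_{\min}(j_1)$ explicit, one checks directly that $\operatorname{Poh}(w_{1,\lambda})=\lambda^{N-3}\bigl(\frac{N-3}{N-1}T(w)-\lambda^2j_1\bigr)$ vanishes at $\lambda=\lambda_0(j_1)$.
\end{itemize}
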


{\it Proof. } 
(i) Let $N \geq 4$. Fix
$
w \in\{w \in \mathcal{E} ~\big|~ w \text{ is not constant}, ~\operatorname{Poh}(w)=0\}
$
and $j_1>0$.
Since $0=\operatorname{Poh}(w)=\frac{N-3}{N-1} T(w)-J(w)$ and $T(w)>0$, then $J(w)=\frac{N-3}{N-1} T(w)>0$.
Thus, we know that there exists $\lambda_{j_1}>0$ such that $J(w_{1, \lambda_{j_1}})=\lambda_{j_1}^{N-1}J(w)=j_1$. Then we have
\begin{equation}
\label{8-1}
\begin{aligned}
T\left(w_{1,\lambda_{j_1}} \right) \geq T_{\min }\left(j_1\right).
\end{aligned}
\end{equation}
Since $T(w)>0$ and $J(w)>0$, we have
$$
\lambda \mapsto S(w_{1, \lambda})=\lambda^{N-3} T(w)-\lambda^{N-1} J(w)
$$
reaches its maximum when $\lambda=1$. Therefore,
\begin{equation}
\label{8-1_1}
\begin{aligned}
S(w) & =S(w_{1,1}) \geq S(w_{1, \lambda_{j_1}}) 
 =T(w_{1, \lambda_{j_1}})-J(w_{1, \lambda_{j_1}}) \\
& \geq T_{\min }(j_1)-j_1,
\end{aligned}
\end{equation}
where we have used \eqref{8-1}.
Passing to the infimum as $w \in\{w \in \mathcal{E} ~\big|~ w \text{ is not constant}, \operatorname{Poh}(w)=0\}$, then taking the supremum as $j_1>0$ in \eqref{8-1_1}, we have
\begin{equation}
\label{cm8.4}
\begin{aligned}
 S_0 \geq \sup_{j_1>0}\left(T_{\min }\left(j_1\right)-j_1\right).
\end{aligned}
\end{equation}

Then we will study the map $j_1 \mapsto T_{\min}(j_1)-j_1$.
Using Theorem \ref{annthm5.3}, we see that
$T_{\min}(j_1)=C_0 S_0^{\frac{2}{N-1}} j_1^{\frac{N-3}{N-1}}$.
Let $f(j_1)=T_{\min}(j_1)-j_1=C_0 S_0^{\frac{2}{N-1}} j_1^{\frac{N-3}{N-1}}-j_1$, then
$f^{\prime}(j_1)= \frac{N-3}{N-1} C_0 S_0^{\frac{2}{N-1}} j_1^{-\frac{2}{N-1}}-1$. Assume that at the point $j_1^*$, we have $f^{\prime}(j_1^*)=0$. A short computation yields
\begin{equation*}
\begin{aligned}
j_1^*&=\left(\frac{N-1}{N-3}C_0^{-1} S_0^{-\frac{2}{N-1}} \right)^{-\frac{N-1}{2}} \\ 
& =\left[\left(\frac{1}{2}\right)^{\frac{2}{N-1}}(N-1)(N-3)^{-\frac{N-3}{N-1}}\right]^{\frac{N-1}{2}} S_0\left(\frac{N-1}{N-3}\right)^{-\frac{N-1}{2}}  \\ 
& =\frac{1}{2} S_0(N-3),
\end{aligned}
\end{equation*}
where we have inserted the constant $C_0$ defined in Lemma
\eqref{bbmlem3.2}. 
On the interval $(0,j_1^*)$, $f^{\prime}(j_1)>0$, while $f^{\prime}(j_1)<0$ on the interval $(j_1^*, \infty)$, therefore, $f$ achieves its supremum at the point $j_1^*$.
We have
\begin{equation*}
\begin{aligned}
f\left(j_1^*\right)= & C_0 S_0^{\frac{2}{N-1}}\left(j_1^*\right)^{\frac{N-3}{N-1}}-j_1^* \\ 
= & C_0 S_0^{\frac{2}{N-1}}\left[\frac{1}{2} S_0(N-3)\right]^{\frac{N-3}{N-1}} -\frac{1}{2} S_0(N-3) \\ 
= & \frac{1}{2}(N-1) S_0-\frac{1}{2}(N-3) S_0=S_0,
\end{aligned}
\end{equation*}
where we have inserted the constant $C_0$ defined in Lemma
\eqref{bbmlem3.2}.
Therefore, we have proved 
$$
S_0=f\left(j_1^*\right)=\sup_{j_1>0}\left(T_{\min}\left(j_1\right)-j_1\right),
$$
and the supremum is attained at
$j_1^*=\frac{1}{2}(N-3)S_0$,
that is,
$$
\begin{aligned}
 S_0=T_{\min}\left(j_1^*\right)-j_1^*.
 \end{aligned}
$$

\medskip

(ii) Let $w \in \mathcal{E}$ be a minimizer of $T$ under $J=j_1>0$. From Proposition \ref{annprop5.6} we know that $w_{1, \lambda_0(j_1)}$ is a solution to \eqref{ann1.4}. 
Using Proposition 4.1 in \cite{Maris2008} we have $\operatorname{Poh}(w_{1, \lambda_0(j_1)})=0$.
Using the conclusion of (i), we have
$$
S\left(w_{1, \lambda_0(j_1)}\right) \geq S_0 = \sup _{j>0}\left(T_{\min}(j)-j\right) .
$$
Our goal is to show that $S(w_{1, \lambda_0(j_1)}) = S_0$.

Since $w \in \mathcal{E}$ is a minimizer of $T$ under $J=j_1>0$, we have
$$
T(w)=T_{\min}\left(j_1\right), \quad J(w)=j_1 .
$$
We have
\begin{equation}
\label{8-2}
\begin{aligned}
S\left(w_{1, \lambda_0}\right)&=T\left(w_{1, \lambda_0}\right)-J\left(w_{1, \lambda_0}\right) \\
& =\lambda_0^{N-3} T(w)-\lambda_0^{N-1} J(w) \\
& =\lambda_0^{N-3} T_{\min }\left(j_1\right)-\lambda_0^{N-1} j_1.
\end{aligned}
\end{equation}
From Proposition \ref{annprop5.6}, $\lambda_0(j_1)=\left(\frac{N-3}{2}\right)^{\frac{1}{N-1}} S_0^{\frac{1}{N-1}} j_1^{-\frac{1}{N-1}}$,
thus,
\begin{equation}
\label{8-3}
\begin{aligned}
 \lambda_0^{N-1} j_1=\frac{N-3}{2} S_0=j_1^*.
\end{aligned}
\end{equation}
Since $T_{\min}(j_1)=C_0 S_0^{\frac{2}{N-1}} j_1^{\frac{N-3}{N-1}}$,
we calculate
\begin{equation}
\label{8-4}
\begin{aligned}
\lambda_0^{N-3} T_{\min}\left(j_1\right) & =\left(\lambda_0^{N-1}\right)^{\frac{N-3}{N-1}} C_0 S_0^{\frac{2}{N-1}} j_1^{\frac{N-3}{N-1}} \\
& =\left(\lambda_0^{N-1} j_1\right)^{\frac{N-3}{N-1}} C_0 S_0^{\frac{2}{N-1}} \\
& =\left(j_1^*\right)^{\frac{N-3}{N-1}} C_0 S_0^{\frac{2}{N-1}}=T_{\min }\left(j_1^*\right) .
\end{aligned}
\end{equation}
Using \eqref{8-3} and \eqref{8-4}, we continue the calculation
in \eqref{8-2},
$$
\begin{aligned}
S\left(w_{1, \lambda_0}\right) & =\lambda_0^{N-3} T_{\min }\left(j_1\right)-\lambda_0^{N-1} j_1 \\
& =T_{\min }\left(j_1^*\right)-j_1^* \\
& =S_0,
\end{aligned}
$$
where the last equality uses the conclusion in (i).
That is,
$w_{1, \lambda_0}$ minimizes $S$ in the set
$\{w \in \mathcal{E} ~\big|~ w \text{ is not constant}, \operatorname{Poh}(w)=0\}$.

\hfill
$\Box $

The following proposition establishes that under a specific condition, we have $\mathscr{P} \subset \mathscr{J}$.

\begin{Proposition}
\label{prop8-1}
Let $N \geq 4$, $(Ass1)$ and $(Ass2)$ hold.
Let $w \in \mathcal{E}$ minimizes $S$ in the set $\left\{w \in \mathcal{E} ~\big|~ w \text{ is not constant}, ~\operatorname{Poh}(w)=0\right\}$.
If $J(w)=\frac{1}{2}(N-3) S_0$, then $w$ is a minimizer of $T$ under
the constraint $J=\frac{1}{2}(N-3) S_0$.

\end{Proposition}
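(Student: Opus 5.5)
Since $w$ minimizes $S$ on the Pohozaev set, $S(w)=S_0$. Because $w$ is not constant and $\operatorname{Poh}(w)=0$, the identity \eqref{bbm3.3} gives
$$
S(w)=\frac{2}{N-1}T(w), \qquad J(w)=\frac{N-3}{2}S(w).
$$
Hence
$$
T(w)=\frac{N-1}{2}S_0 .
$$
(In fact $J(w)=\frac{N-3}{2}S_0$ follows automatically from this, so the hypothesis is consistent.) Set $j_1^*=\frac12(N-3)S_0$, so that $J(w)=j_1^*$.

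The aim is to show $T(w)=T_{\min}(j_1^*)$. By Theorem \ref{annthm5.3}, a minimizer of $T$ at $J=j$ exists and
$$
T_{\min}(j)=C_0S_0^{\frac{2}{N-1}}j^{\frac{N-3}{N-1}}.
$$
Proposition \ref{cmprop8.3}(i) gives $S_0=T_{\min}(j_1^*)-j_1^*$. Therefore
$$
T_{\min}(j_1^*)=S_0+\frac{N-3}{2}S_0=\frac{N-1}{2}S_0=T(w).
$$
Since $w\in\mathcal{E}$ satisfies $J(w)=j_1^*$ and attains the value $T_{\min}(j_1^*)$, it is a minimizer of $T$ under the constraint $J=\frac12(N-3)S_0$.

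As a cross-check, one could compute $C_0S_0^{\frac{2}{N-1}}(j_1^*)^{\frac{N-3}{N-1}}$ directly from the formula for $C_0$ in Lemma \ref{bbmlem3.2}; this is exactly the computation of $f(j_1^*)$ already carried out in Proposition \ref{cmprop8.3}. Alternatively, Lemma \ref{bbmlem3.2} applied to $w$ gives $T(w)\ge T_{\min}(j_1^*)$ as a lower bound, which is consistent with equality.

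The argument is essentially algebraic. The only point needing care is to justify the identity for $T_{\min}$, i.e. that the infimum equals the Sobolev constant bound. This relies on the existence part of Theorem \ref{annthm5.3}, which assumes minimizing sequences achieving that value. I would therefore invoke the formula as used in Proposition \ref{cmprop8.3}, or equivalently argue directly: $T(w)\ge T_{\min}(j_1^*)$ is trivial, and the reverse inequality follows from $T(w)=\frac{N-1}{2}S_0$ equalling the lower bound $C_0S_0^{\frac{2}{N-1}}(j_1^*)^{\frac{N-3}{N-1}}$ of Lemma \ref{bbmlem3.2}, which every competitor must satisfy.
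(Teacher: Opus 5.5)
Your main argument is the paper's: from $S(w)=S_0$ you get $T(w)=\frac{N-1}{2}S_0$, and you match this with $T_{\min}(j_1^*)=\frac{N-1}{2}S_0$, $j_1^*=\frac12(N-3)S_0$, obtained from the formula $T_{\min}(j)=C_0S_0^{\frac{2}{N-1}}j^{\frac{N-3}{N-1}}$. The paper recomputes this value from $C_0$ instead of quoting Proposition \ref{cmprop8.3}(i), but the algebra is the same. Your two side remarks are also correct: the hypothesis $J(w)=\frac12(N-3)S_0$ holds automatically by \eqref{bbm3.3}, and comparing $T(w)$ directly with the lower bound of Lemma \ref{bbmlem3.2} proves the claim without assuming that this bound is attained — a sharpness that the hypothesis of Theorem \ref{annthm5.3} presupposes rather than proves, and which your argument actually supplies.
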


{\it Proof. } 
From the assumption of the proposition, we see that $\operatorname{Poh}(w)=0, ~S(w)=S_0$ (from Theorem 5.3 in \cite{Maris2013} we know that such $w$ exists).
Since $S(w)=S_0$, we have
$$
S(w)=T(w)-J(w)=S_0,
$$
then 
$$T(w)=S_0+J(w).$$
Since from the assumption that
$J(w)=\frac{1}{2}(N-3)S_0$,
we have
\begin{equation}
\label{8-5}
\begin{aligned}
T(w) & =S_0+J(w)=S_0+\frac{1}{2}(N-3) S_0 \\
& =\frac{1}{2}(N-1) S_0 .
\end{aligned}
\end{equation}
Let $j=J(w)=\frac{1}{2}(N-3) S_0$.
Using Theorem \ref{annthm5.3}, there exists a minimizer $\widetilde{w} \in \mathcal{E}$ of $T$ under the constraint $J(\widetilde{w})=j$; we have $T(\widetilde{w})=T_{\min}(j)$.
From Theorem \ref{annthm5.3}, we have
$$
\begin{aligned}
T_{\min}(j) & =\left(\frac{1}{2}\right)^{\frac{2}{N-1}}(N-3)^{\frac{3-N}{N-1}}(N-1) j^{\frac{N-3}{N-1}} S_0^{\frac{2}{N-1}} \\
& =\frac{1}{2}(N-1) S_0,
\end{aligned}
$$
where we have inserted $j=\frac{1}{2}(N-3) S_0$ in the second identity.
Therefore,
\begin{equation}
\label{8-6}
\begin{aligned}
T(\widetilde{w})=T_{\min}(j)=\frac{1}{2}(N-1) S_0.
\end{aligned}
\end{equation}
Combining \eqref{8-5} and \eqref{8-6}, we see that
$$
T(\widetilde{w})=T_{\min}(j)=T(w),
$$
that is, $w$ is a minimizer of $T$ under
$J=j=\frac{1}{2}(N-3) S_0$.

\hfill
$\Box $

The following proposition shows that $\mathscr{Q}$ is included in $\mathscr{J}$ under some condition.

\begin{Proposition}
\label{cmprop8.4}
Suppose $N \geq 4$, $(Ass1)$ and $(Ass2)$ hold.
Assume that $V \geq 0$ and $q<q_0$, where
$
q_0=\sup \left\{q<0 ~\big|~ E_{\min}(q)<-\sqrt{2} q\right\} .
$

Let $w \in \mathcal{E}$ satisfy $P(w)=q<0$ and
$E(w)=E_{\min}(q)$,
and $w$ satisfy \eqref{ann1.4} for some $\nu \in(0, \sqrt{2})$ (Theorem 4.9 in \cite{CM2017} implies that such $w$ exists).
Let
$$
\begin{aligned}
J(w) =-\left[\int_{\mathbb{R}^N}\left|\frac{\partial w}{\partial x_1}\right|^2+V\left(|w|^2\right) d x+\nu P(w)\right] 
 =j>0 .
\end{aligned}
$$
Let $\widetilde{w}$ minimizes $T$ under $J(\widetilde{w})=j$, where the velocity parameter in the functional $J$ is also $\nu$.
Then:

\medskip

(i) $\frac{1}{2}(N-3) S_0 \leq j \leq-\frac{N-3}{2(N-2)} \nu q$.

\medskip

(ii) We have $T_{\text{min}}(j)=\lambda_0(j)^2 T(w)$
and $T(w)=\frac{N-1}{N-3} j$, where $\lambda_0(j)$ is defined in Proposition \ref{annprop5.6}.
If $j=\frac{1}{2}(N-3) S_0$, then
$$
\begin{aligned}
& \lambda_0(j)=1 \text { and } \\
& T_{\min }(j)=T(w)=\frac{1}{2}(N-1) S_0,
\end{aligned}
$$
equivalently, $w$ is a minimizer of $T$ under fixed $J=\frac{1}{2}(N-3) S_0$.

\medskip

(iii) We have
$$
\begin{aligned}
& \int_{\mathbb{R}^N} V\left(|w|^2\right)dx=\int_{\mathbb{R}^N} V\left(|\widetilde{w}|^2\right)dx=-\frac{1}{2} \nu q-\frac{N-2}{N-3} j, \\
& \int_{\mathbb{R}^N}\left|\frac{\partial w}{\partial x_1}\right|^2dx=\int_{\mathbb{R}^N}\left|\frac{\partial \widetilde{w}}{\partial x_1}\right|^2dx=-\frac{1}{2} \nu q+\frac{1}{N-3} j .
\end{aligned}
$$

\end{Proposition}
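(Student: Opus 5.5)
The plan is to extract everything from the Pohozaev-type identities satisfied by solutions of \eqref{ann1.4}, combined with the scaling relations already used in Remark \ref{lionremII.5} and Proposition \ref{cmprop8.3}. Write
\[
A=\int_{\mathbb{R}^N}\Big|\frac{\partial w}{\partial x_1}\Big|^2dx, \qquad B=\int_{\mathbb{R}^N}V(|w|^2)\,dx,
\]
so that $J(w)=-(A+B+\nu q)=j$.

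\emph{First identity.} Since $w$ solves \eqref{ann1.4}, Proposition 4.1 of \cite{Maris2008} gives $\operatorname{Poh}(w)=0$, that is, $\frac{N-3}{N-1}T(w)=J(w)=j$. Hence $T(w)=\frac{N-1}{N-3}j$.

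\emph{Second identity.} I will also use the Pohozaev identity coming from dilations in the $x_1$ direction, namely $\frac{d}{d\lambda}\big|_{\lambda=1}S(w_{\lambda,1})=0$. Under $x_1\mapsto x_1/\lambda$, the term $A$ scales like $\lambda^{-1}$, while $T$ and $B$ scale like $\lambda$. The momentum $P$ is invariant, since it carries one $x_1$-derivative and one $x_1$-integration. The identity therefore reads $-A+T(w)+B=0$.

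Its rigorous justification for finite-energy solutions also comes from Proposition 4.1 in \cite{Maris2008}, which treats dilations in each direction. I expect this to be the only genuinely analytic input; everything else is algebra.

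\emph{Solving for $A$ and $B$.} With $A-B=T(w)=\frac{N-1}{N-3}j$ and $A+B=-\nu q-j$, I get
\[
A=-\tfrac12\nu q+\tfrac{1}{N-3}j,\qquad B=-\tfrac12\nu q-\tfrac{N-2}{N-3}j .
\]
This is the part of (iii) concerning $w$.

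\emph{Part (i).} The upper bound $j\le -\frac{N-3}{2(N-2)}\nu q$ follows from $B\ge 0$, using $V\ge0$. For the lower bound, $w$ is non-constant and $\operatorname{Poh}(w)=0$, so by the definition of $S_0$ and \eqref{bbm3.3},
\[
S_0\le S(w)=\frac{2}{N-1}T(w)=\frac{2}{N-3}j,
\]
that is, $j\ge\frac12(N-3)S_0$.

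\emph{Part (ii).} This is pure computation. Insert $T_{\min}(j)=C_0S_0^{\frac{2}{N-1}}j^{\frac{N-3}{N-1}}$ (from Theorem \ref{annthm5.3} and Lemma \ref{cmlem4.7}), the explicit $\lambda_0(j)$ from Proposition \ref{annprop5.6}, and $T(w)=\frac{N-1}{N-3}j$. Checking $\lambda_0(j)^2\,\frac{N-1}{N-3}\,j=C_0S_0^{\frac{2}{N-1}}j^{\frac{N-3}{N-1}}$ then reduces to the definition of $C_0$.

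When $j=\frac12(N-3)S_0$, the formula for $\lambda_0$ gives $\lambda_0=1$. Hence $T(w)=T_{\min}(j)=\frac12(N-1)S_0$, and since $J(w)=j$, $w$ is itself a minimizer of $T$ at $J=j$.

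\emph{The $\widetilde w$ part of (iii).} This is the delicate step. My plan is to apply the same two identities to the solution $\widetilde w_{1,\lambda_0}$ given by Proposition \ref{annprop5.6}, and then undo the $x^\perp$-scaling: both $A$ and $B$ scale by $\lambda_0^{N-1}$, while $T$ scales by $\lambda_0^{N-3}$. The difficulty is that this expresses $\int V(|\widetilde w|^2)$ and $\int|\partial_{x_1}\widetilde w|^2$ in terms of $P(\widetilde w)$ rather than $q$.

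To close the argument I would first aim to show $P(\widetilde w)=q$. The natural route is through the extremal case $j=\frac12(N-3)S_0$. There $\lambda_0=1$, and both $w$ and $\widetilde w$ are minimizers with the same $T$ and $J$, hence the same $A+B+\nu P$. I would then try to compare them via the energy minimization $E(w)=E_{\min}(q)$, using $S(\widetilde w)=S_0\le S(w)$ together with $E\ge E_{\min}(P)$.

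If that comparison does not close in general, I would state the $\widetilde w$ identities under the hypothesis $P(\widetilde w)=q$. This hypothesis holds automatically when $\widetilde w=w$, which is the case singled out in (ii).
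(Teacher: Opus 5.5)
Your derivation of the $w$-identities, of (i), and of (ii) is fine. Two of your steps are more direct than the paper's: the lower bound via $S_0\le S(w)=\frac{2}{N-3}j$, and the purely algebraic check of $T_{\min}(j)=\lambda_0(j)^2T(w)$. The paper gets both from $T(\widetilde w)\le T(w)$ together with the Pohozaev identity of $\widetilde w_{1,\lambda_0}$. Your $x_1$-dilation identity is equivalent, given $\operatorname{Poh}(w)=0$, to the full-dilation identity the paper uses.

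The genuine gap is the $\widetilde w$ half of (iii). It requires $q_1:=P(\widetilde w)=q$, and you do not prove this. Your suggested comparison, $E(\widetilde w)\ge E_{\min}(q_1)$ together with $S(\widetilde w)\le S(w)$, only gives $E_{\min}(q_1)+\nu q_1\le E_{\min}(q)+\nu q$, which does not determine $q_1$. Moreover, you only aim at the case $j=\frac12(N-3)S_0$, and you have not shown this is the general case. Adding $P(\widetilde w)=q$ as a hypothesis means (iii) is not proved as stated.

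The missing idea is to rescale $\widetilde w$ in $x^\perp$ so that it becomes an admissible competitor in the energy problem at momentum exactly $q$.

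Since $J(\widetilde w)=j>0$, $V\ge 0$ and $\nu>0$, you get $q_1<0$. So $\sigma=(q/q_1)^{1/(N-1)}$ is defined and $P(\widetilde w_{1,\sigma})=q$, hence $E(w)=E_{\min}(q)\le E(\widetilde w_{1,\sigma})$. Adding $\nu q$ to both sides turns this into $\frac{2}{N-3}j=S(w)\le \sigma^{N-3}T(\widetilde w)-\sigma^{N-1}j$.

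The identity $\operatorname{Poh}(\widetilde w_{1,\lambda_0})=0$ gives $T(\widetilde w)=\lambda_0^2\frac{N-1}{N-3}j$. Also $\lambda_0\le 1$ is equivalent to your lower bound in (i). Together these give $(N-3)\sigma^{N-1}-(N-1)\sigma^{N-3}+2\le 0$. This polynomial vanishes at $\sigma=1$ and has derivative $(N-1)(N-3)\sigma^{N-4}(\sigma^2-1)$, so it is positive on $(0,\infty)\setminus\{1\}$. Thus $\sigma=1$ and $q_1=q$.

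Now apply your two identities to $\widetilde w_{1,\lambda_0}$ and unscale. This gives $\int|\partial_{x_1}\widetilde w|^2-\int V(|\widetilde w|^2)=\lambda_0^{-2}T(\widetilde w)=\frac{N-1}{N-3}j$, while $J(\widetilde w)=j$ gives $\int|\partial_{x_1}\widetilde w|^2+\int V(|\widetilde w|^2)=-\nu q_1-j$. Solving yields exactly the same values as for $w$.

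As a by-product, plugging $\sigma=1$ back in before using $\lambda_0\le1$ yields $\lambda_0\ge 1$. So $j=\frac12(N-3)S_0$ is in fact forced under the hypotheses. Your instinct to pass through the extremal case was sound, but it is this rescaling argument that gets you there.
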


{\it Proof. } 
Since from the assumption of the proposition: $w$ satisfies \eqref{ann1.4} for some $\nu \in(0, \sqrt{2})$, 
it follows from Proposition 4.1 in \cite{Maris2008} that $w$ satisfies the following two Pohozaev identities:
\begin{equation}
\label{cm8.5}
\begin{aligned}
 \frac{N-3}{N-1} T(w) 
 +\int_{\mathbb{R}^N}\left|\frac{\partial w}{\partial x_1}\right|^2+V\left(|w|^2\right) d x+\nu q=0 ;
\end{aligned}
\end{equation}
\begin{equation}
\label{cm8.5_2}
\begin{aligned}
 (N-2) \int_{\mathbb{R}^N}|\nabla w|^2dx+N \int_{\mathbb{R}^N} V\left(|w|^2\right)dx +(N-1)\nu q=0 .
\end{aligned}
\end{equation}
From the assumption of the proposition,
\begin{equation}
\label{cm8.5_3}
\begin{aligned}
j=-\left[\int_{\mathbb{R}^N}\left|\frac{\partial w}{\partial x_1}\right|^2+V\left(|w|^2\right) d x+\nu q\right] .
\end{aligned}
\end{equation}
Solving \eqref{cm8.5}, \eqref{cm8.5_2}, and \eqref{cm8.5_3},
we have
\begin{equation}
\label{cm8.5_4}
\begin{aligned}
T(w)=\frac{N-1}{N-3} j;
\end{aligned}
\end{equation}
\begin{equation}
\label{cm8.5_5}
\begin{aligned}
\int_{\mathbb{R}^N} V\left(|w|^2\right)dx=-\frac{1}{2} \nu q-\frac{N-2}{N-3} j;
\end{aligned}
\end{equation}
\begin{equation}
\label{cm8.5_6}
\begin{aligned}
\int_{\mathbb{R}^N}\left|\frac{\partial w}{\partial x_1}\right|^2 d x=-\frac{1}{2} \nu q+\frac{1}{N-3} j.
\end{aligned}
\end{equation}
Since the assumption says $V \geq 0$, we have
$$
\int_{\mathbb{R}^N} V\left(|w|^2\right)dx=-\frac{1}{2} \nu q-\frac{N-2}{N-3} j \geq 0.
$$
Solving this inequality, we get
\begin{equation}
\label{cm8.5_7}
\begin{aligned}
j \leq-\frac{N-3}{2(N-2)} \nu q .
\end{aligned}
\end{equation}

Using Theorem \ref{annthm5.3}, there exists a minimizer $\widetilde{w} \in \mathcal{E}$ of $T$ under the constraint
$$
J(\widetilde{w})=j .
$$
Denote $q_1=P(\widetilde{w})$.
Since
$$
\begin{aligned}
J(\widetilde{w})  =-\left[\int_{\mathbb{R}^N}\left|\frac{\partial \widetilde{w}}{\partial x_1}\right|^2+V\left(|\widetilde{w}|^2\right) d x+\nu P(\widetilde{w})\right] 
=j>0,
\end{aligned}
$$
we have $\int_{\mathbb{R}^N}\left|\frac{\partial \widetilde{w}}{\partial x_1}\right|^2+V(|\widetilde{w}|^2) d x+\nu P(\widetilde{w})<0$, since $V \geq 0, ~\nu>0$, and $\int_{\mathbb{R}^N}|\frac{\partial \widetilde{w}}{\partial x_1}|^2dx \geq 0$, we have $P(\widetilde{w})=q_1<0$.

By Proposition \ref{annprop5.6}, $\widetilde{w}_{1, \lambda_0(j)}$ satisfies \eqref{ann1.4}.
Since $\widetilde{w}_{1, \lambda_0(j)}$ satisfies \eqref{ann1.4}, using Proposition 4.1 in \cite{Maris2008}, we see that $\widetilde{w}_{1, \lambda_0(j)}$ satisfies the Pohozaev identity:
$$
\frac{N-3}{N-1} T\left(\widetilde{w}_{1, \lambda_0(j)}\right)-J\left(\widetilde{w}_{1, \lambda_0(j)}\right)=0,
$$
which is equivalent to
$$
\frac{N-3}{N-1} \lambda_0(j)^{N-3} T(\widetilde{w})-\lambda_0(j)^{N-1} J(\widetilde{w})=0.
$$
Simplifying, we have
\begin{equation}
\label{cm8.6}
\begin{aligned}
\frac{N-3}{N-1} \lambda_0(j)^{-2} T(\widetilde{w})-J(\widetilde{w})=0 .
\end{aligned}
\end{equation}
Proposition 4.1 in \cite{Maris2008} implies that $\widetilde{w}_{1, \lambda_0(j)}$ satisfies another Pohozaev identity:
$$
\begin{aligned}
(N-2) & \int_{\mathbb{R}^N}\left|\nabla\widetilde{w}_{1, \lambda_0}\right|^2dx+N \int_{\mathbb{R}^N} V\left(\left|\widetilde{w}_{1, \lambda_0}\right|^2\right)dx 
+  \nu(N-1) P\left(\widetilde{w}_{1, \lambda_0}\right)=0,
\end{aligned}
$$
which is equivalent to,
\begin{equation}
\label{cm8.6_2}
\begin{aligned}
 (N-2) \lambda_0(j)^{-2} T(\widetilde{w})+(N-2) \int_{\mathbb{R}^N} \left|\frac{\partial \widetilde{w}}{\partial x_1}\right|^2 d x 
 +N \int_{\mathbb{R}^N} V\left(|\widetilde{w}|^2\right) d x+(N-1) \nu q_1=0.
\end{aligned}
\end{equation}
Since
\begin{equation}
\label{cm8.6_3}
\begin{aligned}
j=-\left[\int_{\mathbb{R}^N}\left|\frac{\partial \widetilde{w}}{\partial x_1}\right|^2+V\left(|\widetilde{w}|^2\right) d x+\nu q_1\right],
\end{aligned}
\end{equation}
solving \eqref{cm8.6}, \eqref{cm8.6_2} and \eqref{cm8.6_3}, we have
\begin{equation}
\label{cm8.6_4}
\begin{aligned}
T(\widetilde{w})=\lambda_0(j)^2\frac{N-1}{N-3} j;
\end{aligned}
\end{equation}
\begin{equation}
\label{cm8.6_5}
\begin{aligned}
\int_{\mathbb{R}^N} V\left(|\widetilde{w}|^2\right)dx=-\frac{1}{2} \nu q_{1}-\frac{N-2}{N-3} j;
\end{aligned}
\end{equation}
\begin{equation}
\label{cm8.6_6}
\begin{aligned}
\int_{\mathbb{R}^N}\left|\frac{\partial \widetilde{w}}{\partial x_1}\right|^2 d x=-\frac{1}{2} \nu q_{1}+\frac{1}{N-3} j.
\end{aligned}
\end{equation}

Since $J(w)=J(\widetilde{w})=j$ and $\widetilde{w}$ minimizes $T$ under $J=j$, we have
$$
T(\widetilde{w}) \leq T(w) .
$$
Using \eqref{cm8.5_4} and \eqref{cm8.6_4}, the above inequality is equivalent to
\begin{equation}
\label{cm8.7}
\begin{aligned}
\lambda_0(j)^2 \frac{N-1}{N-3} j \leq \frac{N-1}{N-3} j .
\end{aligned}
\end{equation}
Solving \eqref{cm8.7}, we have
\begin{equation}
\label{cm8.8}
\begin{aligned}
\lambda_0(j) \leq 1 .
\end{aligned}
\end{equation}
Since by Proposition \ref{annprop5.6},
$
\lambda_0(j)=\left(\frac{N-3}{2}\right)^{\frac{1}{N-1}} S_0^{\frac{1}{N-1}} j^{-\frac{1}{N-1}},
$
solving \eqref{cm8.8}, we have
\begin{equation}
\label{cm8.8_2}
\begin{aligned}
j \geq \frac{1}{2}(N-3) S_0 .
\end{aligned}
\end{equation}
\eqref{cm8.5_7} and \eqref{cm8.8_2} together show that (i) holds.

\medskip

\eqref{cm8.5_4} and \eqref{cm8.6_4} give
$$
T_{\min}(j)=T(\widetilde{w})=\lambda_0(j)^2 T(w),
$$
which proves the first part of (ii).

If $j=\frac{1}{2}(N-3) S_0$, then
$$
\begin{aligned}
\lambda_0(j) & =\left(\frac{N-3}{2}\right)^{\frac{1}{N-1}} S_0^{\frac{1}{N-1}} j^{-\frac{1}{N-1}} \\
& =\left(\frac{N-3}{2}\right)^{\frac{1}{N-1}} S_0^{\frac{1}{N-1}} \cdot\left(\frac{N-3}{2}\right)^{-\frac{1}{N-1}} S_0^{-\frac{1}{N-1}} \\
& =1,
\end{aligned}
$$
thus, 
$$
T_{\min }(j)=T\left(\widetilde{w}\right)=T(w)= \frac{N-1}{N-3}j=\frac{1}{2}(N-1) S_0.
$$
Therefore, we have proved the second part of (ii).

\medskip

Setting $\sigma=\left(\frac{q}{q_1}\right)^{\frac{1}{N-1}}$, we have $P\left(\widetilde{w}_{1,\sigma}\right)=q$. Since $w$ minimizes $E$ under $P=q$, we have
$$
E(w) \leq E\left(\widetilde{w}_{1,\sigma}\right).
$$
This is equivalent to
\begin{equation}
\label{cm8.9}
\begin{aligned}
 T(w)+\int_{\mathbb{R}^N}\left|\frac{\partial w}{\partial x_1}\right|^2+V\left(|w|^2\right) d x 
 \leq \sigma^{N-3} T(\widetilde{w})+\sigma^{N-1} \int_{\mathbb{R}^N}\left|\frac{\partial \widetilde{w}}{\partial x_1}\right|^2+V\left(|\widetilde{w}|^2\right) d x.
\end{aligned}
\end{equation}
Plugging \eqref{cm8.5_4}, \eqref{cm8.5_5}, \eqref{cm8.5_6} and \eqref{cm8.6_4}, \eqref{cm8.6_5}, \eqref{cm8.6_6} into \eqref{cm8.9}, we have
\begin{equation}
\label{cm8.10}
\begin{aligned}
-\nu q+\frac{2}{N-3} j  \leq \sigma^{N-3} \lambda_0(j)^2 \frac{N-1}{N-3} j 
 +\sigma^{N-1}\left(-\nu q_1-j\right) .
\end{aligned}
\end{equation}
Using \eqref{cm8.8} and \eqref{cm8.10}, we deduce that
$$
-\nu q+\frac{2}{N-3} j \leq \sigma^{N-3} \frac{N-1}{N-3} j+\sigma^{N-1}\left(-\nu q_1-j\right).
$$
Since $q=\sigma^{N-1} q_1$, the inequality becomes
$$
-\nu \sigma^{N-1} q_1+\frac{2}{N-3} j \leq \sigma^{N-3} \frac{N-1}{N-3} j+\sigma^{N-1}\left(-\nu q_1-j\right) .
$$
Simplifying the last inequality, we have
$$
\frac{2}{N-3} j \leq \sigma^{N-3} \frac{N-1}{N-3} j-\sigma^{N-1} j .
$$
Since $j>0$, we further deduce that
$$
\frac{2}{N-3} \leq \sigma^{N-3} \frac{N-1}{N-3}-\sigma^{N-1},
$$
which is equivalent to
\begin{equation}
\label{cm8.11}
\begin{aligned}
(N-3) \sigma^{N-1}-(N-1) \sigma^{N-3}+2 \leq 0.
\end{aligned}
\end{equation}
Since $N \geq 4$ we have
$$
\begin{aligned}
 (N-3) \sigma^{N-1}-(N-1) \sigma^{N-3}+2 
=  (\sigma-1)^2\left[(N-3) \sigma^{N-3}+2 \sum_{k=0}^{N-4}(k+1) \sigma^k\right] .
\end{aligned}
$$
Plugging this identity into \eqref{cm8.11} and since $N \geq 4,~\sigma>0$, we have $\sigma=1$. Therefore, $q=q_1$. From \eqref{cm8.5_5}, \eqref{cm8.5_6} and \eqref{cm8.6_5}, \eqref{cm8.6_6}, we have
$$
\begin{aligned}
& \int_{\mathbb{R}^N} V\left(|w|^2\right)dx=\int_{\mathbb{R}^N} V\left(|\widetilde{w}|^2\right)dx=-\frac{1}{2} \nu q-\frac{N-2}{N-3} j ; \\
& \int_{\mathbb{R}^N}\left|\frac{\partial w}{\partial x_1}\right|^2dx=\int_{\mathbb{R}^N}\left|\frac{\partial \widetilde{w}}{\partial x_1}\right|^2dx=-\frac{1}{2} \nu q+\frac{1}{N-3} j .
\end{aligned}
$$
The conclusion of (iii) follows.

\hfill
$\Box $

Theorem 5.6 and Proposition 5.8 in \cite{CM2017}
give solitons which are minimizers of $I=P(w)+\int_{\mathbb{R}^N} V(|w|^2) d x$ under $\int_{\mathbb{R}^N}|\nabla w|^2 d x=k>0$. Let $\mathscr{K}$ be the family of these solitons.
It is shown in section 8 of \cite{CM2017} that
$
\mathscr{Q} \subset \mathscr{K} \subset \mathscr{P} .
$
We do not know the relationship between $\mathscr{K}$ and $\mathscr{J}$. We guess that $\mathscr{K} \subset \mathscr{J}$, but we are not able to confirm whether this is correct.

\end{document}